\title{Internally {C}alabi--{Y}au algebras and cluster-tilting objects}
\author{Matthew Pressland}
\address{Matthew Pressland\\Max-Planck-Institut für Mathematik\\Vivatsgasse 7
 \\53111~Bonn\\Germany}
\email{mdpressland@bath.edu}
\subjclass[2010]{13F60, 16G20, 16G50, 18E30}
\keywords{Calabi--Yau algebra, cluster algebra, cluster-tilting object, Frobenius category, Jacobian algebra, quiver with potential}
\date{December 28, 2016} 
\Crefname{eg}{Example}{Examples}
\Crefname{thm}{Theorem}{Theorems}
\Crefname{conj}{Conjecture}{Conjectures}
\Crefname{prop}{Proposition}{Propositions}
\Crefname{lem}{Lemma}{Lemmas}
\Crefname{defn}{Definition}{Definitions}
\Crefname{equation}{Equation}{Equations}
\Crefname{thm*}{Theorem}{Theorems}
\Crefname{conj*}{Conjecture}{Conjectures}
\Crefname{ques}{Question}{Questions}
\Crefname{rem}{Remark}{Remarks}
\theoremstyle{plain}
\newtheorem{thm}{Theorem}[section]
\newtheorem{thm*}{Theorem}
\newtheorem{lem}[thm]{Lemma}
\newtheorem{cor}[thm]{Corollary}
\newtheorem{prop}[thm]{Proposition}
\theoremstyle{definition}
\newtheorem{defn}[thm]{Definition}
\newtheorem{eg}[thm]{Example}
\newtheorem{rem}[thm]{Remark}
\theoremstyle{remark}
\newcommand{\CC}{\mathbb{C}}
\newcommand{\KK}{\mathbb{K}}
\newcommand{\ZZ}{\mathbb{Z}}
\newcommand{\bP}{\mathbf{P}}
\newcommand{\typeA}[1]{\mathsf{A}_{#1}}
\newcommand{\afftypeA}[1]{\widetilde{\mathsf{A}}_{#1}}
\newcommand{\cat}{\mathcal{C}}
\newcommand{\frobcat}{\mathcal{E}}
\newcommand{\abcat}{\mathcal{A}}
\newcommand{\ctiltcat}{\mathcal{T}}
\newcommand{\mut}{\mathrm{m}}
\DeclareMathOperator{\Sub}{Sub}
\DeclareMathOperator{\stabSub}{\underline{Sub}}
\DeclareMathOperator{\CM}{CM}
\DeclareMathOperator{\stabCM}{\underline{CM}}
\DeclareMathOperator{\Mod}{Mod}
\DeclareMathOperator{\fgmod}{mod}
\DeclareMathOperator{\Ext}{Ext}
\DeclareMathOperator{\Hom}{Hom}
\DeclareMathOperator{\stabHom}{\underline{Hom}}
\DeclareMathOperator{\stabEnd}{\underline{End}}
\DeclareMathOperator{\RHom}{\mathbf{R}Hom}
\DeclareMathOperator{\SL}{SL}
\DeclareMathOperator{\End}{End}
\DeclareMathOperator{\catproj}{proj}
\DeclareMathOperator{\catinj}{inj}
\DeclareMathOperator{\add}{add}
\DeclareMathOperator{\GP}{GP}
\DeclareMathOperator{\coker}{coker}
\DeclareMathOperator{\gldim}{gl.dim}
\DeclareMathOperator{\pdim}{p.dim}
\DeclareMathOperator{\im}{im}
\DeclareMathOperator{\stabGP}{\underline{GP}}
\DeclareMathOperator{\per}{per}
\DeclareMathOperator{\thick}{thick}
\newcommand{\op}{\mathrm{op}}
\newcommand{\otherwise}{\text{otherwise}}
\newcommand{\iso}{\cong}
\newcommand{\isoto}{\stackrel{\sim}{\to}}
\newcommand{\node}{\bullet}
\newcommand{\tens}{\mathbin{\otimes}}
\newcommand{\ltens}{\mathbin{\stackrel{\mathbf{L}}{\otimes}}}
\newcommand{\dsum}{\mathbin{\oplus}}
\newcommand{\bigdsum}{\bigoplus}
\newcommand{\union}{\cup}
\newcommand{\id}[1]{1_{#1}}
\renewcommand{\emptyset}{\varnothing}
\newcommand{\cohom}[2]{\mathrm{H}^{#1}(#2)}
\newcommand{\powser}[2]{#1[\hspace{-0.1em}[#2]\hspace{-0.1em}]}
\newcommand{\Grass}[2]{G_{#1}^{#2}}
\newcommand{\Endalg}[2]{\End_{#1}(#2)^{\op}}
\newcommand{\stabEndalg}[2]{\stabEnd_{#1}(#2)^{\op}}
\newcommand{\comp}[1]{\widehat{#1}}
\newcommand{\vout}[1]{#1^+}
\newcommand{\vin}[1]{#1^-}
\newcommand{\head}[1]{h#1}
\newcommand{\tail}[1]{t#1}
\newcommand{\idemp}[1]{e_{#1}}
\newcommand{\preproj}[1]{\Pi(#1)}
\newcommand{\env}[1]{{#1}^{\varepsilon}}
\newcommand{\dcat}[1]{\mathcal{D}{#1}}
\newcommand{\dcatfd}[1]{\mathcal{D}_{\mathrm{fd}}(#1)}
\newcommand{\bdcat}[1]{\mathcal{D}^b{#1}}
\newcommand{\resdcat}[2]{\mathcal{D}_{#1}(#2)}
\newcommand{\resdcatfd}[2]{\mathcal{D}_{\mathrm{fd},#1}(#2)}
\newcommand{\inj}[1]{Q_{#1}}
\newcommand{\simp}[1]{S_{#1}}
\newcommand{\res}[1]{\mathbf{P}(#1)}
\newcommand{\rhom}[1]{\Omega_{#1}}
\newcommand{\Ctilt}[2]{\widetilde{A}}
\newcommand{\jac}[2]{\mathcal{J}(#1,#2)}
\newcommand{\frjac}[3]{\mathcal{J}(#1,#2,#3)}
\newcommand{\cc}[1]{\varphi_{#1}}
\newcommand{\lift}[1]{\widetilde{#1}}
\newcommand{\rad}[1]{\mathfrak{m}(#1)}
\newcommand{\Kdual}{\mathrm{D}}
\newcommand{\dual}[1]{#1^\vee}
\newcommand{\contra}[1]{#1^*}
\newcommand{\syz}[1]{\Omega_{#1}}
\newcommand{\stab}[1]{\underline{#1}}
\newcommand{\set}[1]{\{#1\}}
\newcommand{\Span}[1]{\langle#1\rangle}
\newcommand{\der}[1]{\partial_{#1}}
\newcommand{\Div}[1]{\Delta_{#1}}
\newcommand{\rel}[1]{\rho_{#1}}
\newcommand{\close}[1]{\overline{#1}}
\newcommand{\noop}[1]{}
\newcommand{\clustalg}[1]{\mathscr{A}(#1)}
\newcommand{\frozen}{dashed}
\begin{document}

\begin{abstract}
We describe what it means for an algebra to be internally $d$-Calabi--Yau with respect to an idempotent. This definition abstracts properties of endomorphism algebras of $(d-1)$-cluster-tilting objects in certain stably $(d-1)$-Calabi--Yau Frobenius categories, as observed by Keller--Reiten. We show that an internally $d$-Calabi--Yau algebra satisfying mild additional assumptions can be realised as the endomorphism algebra of a $(d-1)$-cluster-tilting object in a Frobenius category. Moreover, if the algebra satisfies a stronger `bimodule' internally $d$-Calabi--Yau condition, this Frobenius category is stably $(d-1)$-Calabi--Yau. We pay special attention to frozen Jacobian algebras; in particular, we define a candidate bimodule resolution for such an algebra, and show that if this complex is indeed a resolution, then the frozen Jacobian algebra is bimodule internally $3$-Calabi--Yau with respect to its frozen idempotent. These results suggest a new method for constructing Frobenius categories modelling cluster algebras with frozen variables, by first constructing a suitable candidate for the endomorphism algebra of a cluster-tilting object in such a category, analogous to Amiot's construction in the coefficient-free case.
\end{abstract}
\maketitle

\section{Introduction}

Cluster categories, first introduced in special cases by Buan--Marsh--Reineke--Reiten--Todorov \cite{buantilting} and later generalised by Amiot \cite{amiotcluster}, are certain $\Hom$-finite $2$-Calabi--Yau triangulated categories that model the combinatorics of cluster algebras without frozen variables. In particular, a cluster category $\cat$ contains cluster-tilting objects, which are objects $T$ satisfying
\[\add{T}=\set{X\in\cat:\Ext^1_\cat(T,X)=0}=\set{X\in\cat:\Ext^1_\cat(X,T)=0}.\]
Basic cluster-tilting objects, whose summands in any direct sum decomposition are pairwise non-isomorphic, model the clusters of the cluster algebra; from now on, any time we refer to a cluster-tilting object, we assume it to be basic.

The main reason that the cluster-tilting objects of $\cat$ can be said to model the clusters of a cluster algebra is that, as with a cluster, it is possible to pass from one cluster-tilting object to another by a process of mutation. For any indecomposable summand $T_i$ of a cluster-tilting object $T$, there exists a unique indecomposable $T_i'\in\cat$, not isomorphic to $T_i$, such that $T/T_i\dsum T_i'$ is a cluster-tilting object. Moreover, $T_i'$ can be computed by either of the exchange triangles
\[\begin{tikzcd}[column sep=20pt,row sep=3pt]
T_i\arrow{r}{f}&X_i\arrow{r}&T_i'\arrow{r}&T_i[1],\\
T_i'\arrow{r}&Y_i\arrow{r}{g}&T_i\arrow{r}&T_i'[1],
\end{tikzcd}\]
in which $f$ is a minimal left $\add{T/T_i}$-approximation of $T_i$, and $g$ is a minimal right $\add{T/T_i}$-approximation of $T_i$. Choosing an initial cluster-tilting object $T^0=\bigdsum_{i=1}^nT_i^0$ of $\cat$ yields a  cluster character \cite{calderocluster}, \cite{palucluster} $\cc{}\colon\cat\to\clustalg{Q,\mathbf{x}}$ from the objects of $\cat$ to the cluster algebra $\clustalg{Q,\mathbf{x}}$ with initial seed given by the quiver $Q$ of $\Endalg{\cat}{T}$ with cluster variables $x_i=\cc{T_i^0}$. The exchange triangles correspond to the exchange relations
\[\cc{T_i}\cc{T_i'}=\cc{X_i}+\cc{Y_i}\]
in this cluster algebra. The cluster variables of $\clustalg{Q,\mathbf{x}}$ are precisely the elements of the form $\cc{M}$ for $M$ an indecomposable reachable rigid object of $\cat$, where rigid means that $\Ext^1_\cat(M,M)=0$, and reachable means that $M$ is a summand of a reachable cluster-tilting object, i.e.\ one obtained from $T^0$ by a finite sequence of mutations. The clusters are the sets of the form $\set{\cc{T_1},\dotsc,\cc{T_n}}$ for $T=\bigdsum_{i=1}^nT_i$ reachable cluster-tilting in $\cat$.

For a more thorough introduction to the theory of cluster algebras and their categorification, we recommend Keller's survey \cite{kellercluster}.

The categorification of cluster algebras by cluster categories has proved to be very useful in studying their combinatorics, since the cluster category $\cat$ can be considered more globally than the associated cluster algebra $\clustalg{Q,\mathbf{x}}$. For example, to identify clusters or cluster variables of $\clustalg{Q,\mathbf{x}}$, one usually has to find a sequence of mutations from a known cluster, which is a highly computationally intensive procedure. By contrast, cluster-tilting or rigid objects of $\cat$ are characterised intrinsically.

For this reason, it would be extremely useful to be able to more readily categorify cluster algebras that do have frozen variables, particularly as most of the examples occurring in nature, such as those on the coordinate rings of partial flag varieties and their unipotent cells, as studied by Geiß--Leclerc--Schröer \cite{geisspartial}, are of this type. The natural candidate for such a categorification is a stably $2$-Calabi--Yau Frobenius category, as we now describe.

A Frobenius category is an exact category with enough projective and injective objects, such that these two classes of objects coincide. If $\frobcat$ is a Frobenius category, then the stable category $\stab{\frobcat}=\frobcat/\catproj{\frobcat}$ is triangulated by a famous result of Happel \cite{happeltriangulated}*{\S I.2}. It is immediate from the definition that if $T\in\frobcat$ is cluster-tilting, then $\catproj{\frobcat}\subset\add{T}$. In this case, we must have $\catproj{\frobcat}=\add{P}$ for some object $P=\bigdsum_{i=r+1}^nT_i$; the intention is that the objects $T_i$ for $r<i\leq n$, which occur as summands of every cluster-tilting object of $\frobcat$, will correspond to the frozen variables of a cluster algebra. Factoring out $\catproj{\frobcat}$ corresponds to setting these frozen variables to $1$ in the cluster algebra, to recover a cluster algebra without frozen variables. Thus the stable category $\stab{\frobcat}$ should be a cluster category---in particular it should be $2$-Calabi--Yau.

Such categorifications have been described for certain cluster algebras relating to partial flag varieties, for example by Geiss--Leclerc--Schröer \cite{geisspartial}, Jensen--King--Su \cite{jensencategorification} and Demonet--Iyama \cite{demonetlifting}, and we will recall some of these constructions in Section~\ref{frobclustcats}. Nájera Chávez \cite{najerachavez2calabiyau} has also categorified finite type cluster algebras with `universal' coefficients. However, in all cases, the construction of the category depends on having at least a partial understanding of the overall structure of the cluster algebra. Thus the methods of these papers cannot be easily abstracted to produce categorifications for more general cluster algebras with frozen variables.

The main aim of this paper is to consider how one might be able to produce a categorification of a cluster algebra with frozen variables without understanding this global structure, instead starting only from the data of a single seed, which is how a cluster algebra is usually specified. This is analogous to Amiot's construction of cluster categories in the case that there are no frozen variables; given a seed, one has to find a rigid potential on the quiver of the seed such that the resulting Jacobian algebra (see \Cref{bimodcomplex}) is finite dimensional, and then Amiot provides a general recipe for constructing a categorification of the cluster algebra. Our construction in the case that there are frozen variables is similar, but requires more data satisfying more conditions. Given the seed of a cluster algebra with frozen variables, we take its quiver, and aim to add arrows between frozen variables and choose a potential such that the resulting frozen Jacobian algebra satisfies a number of conditions, most importantly that of being bimodule internally $3$-Calabi--Yau. If this can be achieved, the general machinery developed in this paper can take over to produce the desired Frobenius category, from which the frozen Jacobian algebra can be recovered as the endomorphism algebra of a cluster-tilting object.

For most of the paper, we will in fact work in a higher level of generality, and construct stably $d$-Calabi--Yau Frobenius categories admitting $d$-cluster-tilting objects (\Cref{ctiltdef}); setting $d=2$ recovers the definition of cluster-tilting given above.

The structure of the paper is as follows. In \Cref{defns} we say what it means for an algebra to be internally $d$-Calabi--Yau with respect to an idempotent $e$ (\Cref{idcy}), and also make a stronger, more symmetric, definition of bimodule internally $d$-Calabi--Yau with respect to $e$ (\Cref{bimodidcy}). An algebra $A$ satisfying either of these definitions has, in particular, finite global dimension, and a duality
\[\Kdual\Ext_A^i(M,N)=\Ext_A^{d-i}(N,M)\]
for any pair $M$ and $N$ of $A$-modules such that $M$ is finite dimensional, $eM=0$ and both $M$ and $N$ are perfect when considered as stalk complexes in the bounded derived category of $A$. In much of the paper we will also assume that $A$ is Noetherian, in which case this last condition reduces to $M$ and $N$ being finitely generated.

In \Cref{frobclustcats}, we introduce the class of Frobenius $m$-cluster categories for $m\geq1$, and exploit a result of Keller--Reiten \cite{kellerclustertilted} to show that the endomorphism algebra of an $m$-cluster-tilting object in a Frobenius $m$-cluster category is internally $(m+1)$-Calabi--Yau. A Morita-type theorem of Iyama--Kalck--Wemyss--Yang \cite{iyamafrobenius} implies that any Frobenius $m$-cluster category admitting an $m$-cluster-tilting object with Noetherian endomorphism algebra is equivalent to the category of Gorenstein projective modules over some Iwanaga--Gorenstein ring (\Cref{gpdef}), and so we pay special attention to categories of this form. We also give brief descriptions of some important families of Frobenius $m$-cluster categories already appearing in the literature.

In \Cref{idcytodct}, we show that an algebra $A$ that is internally $d$-Calabi--Yau with respect to an idempotent $e$ (on both sides) for some $d\geq2$, and satisfies mild additional assumptions, necessarily arises as the endomorphism algebra of a $(d-1)$-cluster-tilting object in some Frobenius category determined by $A$ and $e$. Precisely, we prove the following theorem.

\begin{thm*}[\Cref{airanalogue}]
\label{airanalogueintro}
Let $A$ be a Noetherian algebra, and let $e\in A$ be an idempotent such that $A/\Span{e}$ is finite dimensional and both $A$ and $A^{\op}$ are internally $d$-Calabi--Yau with respect to $e$. Write $B=eAe$ and $\stab{A}=A/\Span{e}$. Then
\begin{itemize}
\item[(i)]$B$ is Iwanaga--Gorenstein with Gorenstein dimension at most $d$, so
\[\GP(B)=\set{X\in\fgmod{B}:\Ext^i_B(X,B)=0,\ i>0}\]
is a Frobenius category,
\item[(ii)]$eA$ is $(d-1)$-cluster-tilting in $\GP(B)$, and
\item[(iii)]there are natural isomorphisms $\Endalg{B}{eA}\isoto A$ and $\Endalg{\stabGP(B)}{eA}\isoto\stab{A}$.
\end{itemize}
\end{thm*}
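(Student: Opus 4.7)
The plan is to exploit the internal Calabi--Yau duality from \Cref{idcy} to control the homological behaviour of $A$-modules annihilated by $e$, and then transfer the resulting finiteness statements between $A$-modules and $B$-modules via the adjunction $Ae \tens_B (-) \dashv e(-)$. As a first computation, for any $M \in \fgmod A$ with $eM=0$, projectivity of $Ae$ over $A$ together with $\Hom_A(Ae,M)=eM=0$ force $\Ext^j_A(Ae,M)=0$ for all $j$; the duality $\Kdual\Ext^i_A(M,Ae)\cong\Ext^{d-i}_A(Ae,M)$ then forces $\Ext^i_A(M,Ae)=0$ for all $i$. By the symmetric hypothesis on $A^{\op}$, the analogous vanishing holds between $eA$ and any finite-dimensional right $A$-module killed by $e$. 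This ``orthogonality'' of the projective parts $Ae$, $eA$ with the stable part $\stab{A}$ is the engine driving the rest of the proof.

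For (i) and the assertion $eA \in \GP(B)$, I would use the adjunction isomorphism $\RHom_B(X,B) \cong \RHom_A(Ae \ltens_B X, Ae)$ for $X \in \fgmod B$. Since $\gldim A \leq d$, the right-hand side is concentrated in degrees at most $d$, which bounds the injective dimension of $B$ as a one-sided module by $d$; running the same argument from the $A^{\op}$ side yields the two-sided Iwanaga--Gorenstein property, so that $\GP(B)$ is Frobenius by Buchweitz's theorem. To put $eA$ into $\GP(B)$ one computes $\Ext^i_B(eA,B)$ for $i>0$ via the same adjunction with $X=eA$: the derived tensor $Ae \ltens_B eA$ agrees with $A$ up to a correction complex whose cohomology is annihilated by $e$ on both sides, and the orthogonality above kills the resulting $\Ext$. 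For the cluster-tilting property in (ii), given $X \in \GP(B)$ with $\Ext^i_B(eA,X)=0$ for $1 \leq i \leq d-2$, I would lift $X$ to an $A$-module $\widetilde{X}$ with $e\widetilde{X}=X$ coming from a surjection $(eA)^m \onto X$, translate the $\Ext$-vanishing over $B$ back into $\Ext$-vanishing over $A$ involving the stable part of $\widetilde{X}$, and then apply the Calabi--Yau duality combined with $\gldim A \leq d$ to force that stable part to vanish, whence $X \in \add(eA)$. The dual hypothesis $\Ext^i_B(X,eA)=0$ is handled symmetrically from the $A^{\op}$ side.

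For (iii), the natural map $A \to \End_B(eA)^{\op}$ is right multiplication, which is well-defined because the right $A$-action on $eA$ commutes with the left $B$-action. Injectivity: any $a$ in the kernel satisfies $eAa=0$, so $a$ generates a one-sided ideal annihilated by $e$, and the orthogonality of the first paragraph forces this to vanish. Surjectivity is obtained by reconstructing $\phi \in \End_B(eA)$ from $\phi(e) \in eA \subseteq A$, where the necessary rigidity of $\phi$ is a consequence of the cluster-tilting conclusion of (ii) together with the $(B,A)$-bimodule structure of $eA$. The stable isomorphism then follows by quotienting out the two-sided ideal of morphisms factoring through $\add(B) = \catproj \GP(B)$, which on the $A$ side is precisely $\Span{e}$. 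I expect the main obstacle to lie in the homological bookkeeping of the second paragraph: pinning down the hyper-$\Ext$ spectral sequence arising from $Ae \ltens_B X$ well enough that the finite global dimension of $A$ yields a sharp injective-dimension bound for $B$, and extending the Calabi--Yau duality (as stated for modules in \Cref{idcy}) to the derived complexes that appear, so that the module-level orthogonality propagates robustly through $\ltens_B$.
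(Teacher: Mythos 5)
Your high-level strategy — establish orthogonality between $Ae$, $eA$ and the stable part $\stab{A}$, then transfer across the adjunction $Ae\tens_B(-)\dashv e(-)$, with the triangle around $Ae\ltens_B eA$ doing the work for $eA\in\GP(B)$ and for $\End_B(eA)^{\op}\cong A$ — is indeed the engine of the paper's proof. The first paragraph and the observation that the cone of $Ae\ltens_B eA\to A$ has cohomology annihilated by $e$ match the paper's Propositions 4.5 and 4.6 (\Cref{airextvanish}, \Cref{extcalcs}). But there are two places where the sketch does not hold up as written.

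First, the injective-dimension bound. You want to read a bound off the adjunction $\RHom_B(X,B)\cong\RHom_A(Ae\ltens_B X,Ae)$, but as you yourself flag, $Ae\ltens_B X$ has cohomology $\Tor^B_i(Ae,X)$ in potentially unboundedly many negative degrees, so $\gldim A\le d$ does not immediately cap the target. This can be repaired by noting that each $\Tor^B_i(Ae,X)$ with $i>0$ is a finitely generated $\stab{A}$-module, hence finite dimensional, hence orthogonal to $Ae$ by your paragraph 1, so the hyper-$\Ext$ spectral sequence collapses to $\RHom_A(Y,Ae)$ with $Y=H^0(Ae\ltens_B X)=Ae\tens_B X$. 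The paper avoids this altogether by a cleaner device: take a \emph{bounded} projective resolution $\bP$ of the $A$-module $Y=Ae\tens_B X$, observe that $e(-)=\Hom_A(Ae,-)$ is exact so that $e\bP$ is a bounded complex in $\add(eA)$ quasi-isomorphic to $eY=X$, and then use $\Ext^{>0}_B(eA,B)=0$ to degenerate the spectral sequence to $\Ext^{d+1}_B(X,B)\cong\Ext^{d+1}_A(Y,Ae)=0$. Either route works, but the fix is essential and your sketch leaves it open.

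Second, and more seriously, the cluster-tilting maximality in (ii). Your proposal to lift $X\in\GP(B)$ (with $\Ext^i_B(eA,X)=0$) to an $A$-module $\widetilde{X}$ via a surjection $(eA)^m\onto X$, analyse its ``stable part'', and force it to vanish, is genuinely different from what the paper does and I do not see how to make it precise: such a lift is not canonical, the relationship between $\Ext_B$-vanishing of $X$ and $\Ext_A$-vanishing of $\widetilde{X}/\Span{e}\widetilde{X}$ is unclear, and even if the stable part vanished you would still need to argue $\widetilde{X}$ is projective over $A$. The paper never lifts $X$. Instead it handles the other direction first: given $\Ext^i_B(X,eA)=0$ for $0<i<d-1$, a truncated projective resolution of $X$ and the bound $\pdim_{A^{\op}}\Hom_B(Y,eA)\leq d-2$ for any $B$-module $Y$ (\Cref{pdimbound}, which is where $\gldim A^{\op}\le d$ enters) show that $\Hom_B(X,eA)$ is a projective $A^{\op}$-module; then the Gorenstein duality $\Hom_B(-,B)\colon\GP(B)\leftrightarrow\GP(B^{\op})$ together with $\Hom_{B^{\op}}(Ae,B)\cong eA$ (from \Cref{extcalcs} applied to $A^{\op}$) converts this to $X\in\add(eA)$. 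The $\Ext^i_B(eA,X)=0$ case then follows by passing through the same duality to $B^{\op}$ and applying the just-proved statement to $A^{\op}$. You have the symmetry instinct right but the wrong primary case and the wrong mechanism. Relatedly, for (iii), rather than checking injectivity and surjectivity of $A\to\End_B(eA)^{\op}$ by hand (your surjectivity argument is not spelled out), it is cleaner to read $\End_B(eA)^{\op}\cong A$ off as the $0$-th cohomology of $\RHom_B(eA,eA)\cong\RHom_A(A,A)$, which your triangle argument already delivers.
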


Under the stronger assumption that $A$ is bimodule internally $d$-Calabi--Yau with respect to $e$, we can show more.

\begin{thm*}[\Cref{bimodicytostabcy}]
Let $A$ be a Noetherian algebra and let $e\in A$ be an idempotent such that $A/\Span{e}$ is finite dimensional, and $A$ is bimodule internally $d$-Calabi--Yau with respect to $e$. Write $B=eAe$. Then all of the conclusions of \Cref{airanalogueintro} hold, and moreover $\stabGP(B)$ is $(d-1)$-Calabi--Yau.
\end{thm*}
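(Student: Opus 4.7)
The plan is to build on \Cref{airanalogueintro}, which, already under the weaker one-sided hypothesis, produces the Iwanaga--Gorenstein algebra $B$, the Frobenius category $\GP(B)$, the $(d-1)$-cluster-tilting object $eA$ and the identifications $\Endalg{B}{eA}\iso A$ and $\Endalg{\stabGP(B)}{eA}\iso\stab A$. Only the stable Calabi--Yau property remains, namely a bifunctorial isomorphism
\[
\Kdual\stabHom_B(X,Y)\iso\stabHom_B(Y,X[d-1])
\]
for $X,Y\in\stabGP(B)$, where $[1]$ denotes the suspension of $\stabGP(B)$ (the inverse syzygy functor). For $d\geq 2$, the right-hand side coincides with $\Ext^{d-1}_B(Y,X)$ computed in $\fgmod B$, since Gorenstein projective modules admit acyclic complete resolutions.

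My first step would be to unpack the bimodule condition of \Cref{bimodidcy} into a self-dual projective bimodule resolution of $A$, whose symmetry up to contributions supported at $e$ provides a genuinely functorial Ext-duality
\[
\Kdual\Ext^i_A(M,N)\iso\Ext^{d-i}_A(N,M)
\]
on a suitable class of pairs of $A$-modules, rather than merely the abstract vector-space duality guaranteed by \Cref{idcy}. The essential feature of the bimodule condition, as distinct from its one-sided counterpart, is precisely that the resulting duality is natural in both arguments and compatible with the triangulated structure on the derived category of $A$.

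My second step would be to transport this duality across the functor $F=\Hom_B(eA,-)\colon\GP(B)\to\fgmod A$ provided by \Cref{airanalogueintro}. The $(d-1)$-cluster-tilting property of $eA$ forces $\Ext^i_B(eA,X)=0$ for $0<i<d-1$ and $X\in\GP(B)$, so one can identify $\Ext^i_B(X,Y)$ in this range with $\Ext^i_A(FX,FY)$; combined with the standard identification of stable Hom over $B$ with a shifted Ext over $A$, this expresses both sides of the desired Calabi--Yau duality as Ext groups between modules of the form $FX, FY$ over $A$. Applying the bimodule Ext-duality, and then checking that the resulting natural isomorphism descends to the quotient by the projectives $\add(eA)$ on both sides, should yield the required isomorphism in $\stabGP(B)$.

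The main obstacle will be verifying that the $A$-modules $FX$ for $X\in\GP(B)$ actually fall within the scope of the bimodule Ext-duality, since $FX$ need not be finite dimensional nor annihilated by $e$, and so the naive formulation via \Cref{idcy} does not immediately apply. I expect the correct workaround is to phrase the bimodule condition as a quasi-isomorphism in the derived category of $A$-bimodules, apply the corresponding derived duality directly to $\RHom_A(FX,FY)$, and then exploit the complete resolutions of $X,Y\in\GP(B)$ to control the dualisations and ensure both sides of the putative isomorphism remain finite and functorial; confirming that nothing unexpected arises from the idempotent $e$ at the boundary of the resolution is where I expect the real work of the proof to lie.
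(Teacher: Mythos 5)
The obstacle you flag at the end is not a technicality to be patched — it is exactly where your plan breaks down, and the paper avoids it by taking an entirely different route. For $X\in\GP(B)$ the module $FX=\Hom_B(eA,X)$ satisfies $e\cdot FX\iso X$ as a $B$-module, so $FX$ is \emph{never} annihilated by $e$ unless $X=0$. The duality supplied by the bimodule condition (\Cref{bimodcydual}) is genuinely one-sided: it gives $\Kdual\Hom_{\dcat{A}}(M,N)\iso\Hom_{\dcat{A}}(N[-d],M)$ only for $M\in\resdcatfd{\stab{A}}{A}$, i.e.\ objects whose cohomology is annihilated by $e$. There is no derived reformulation of \Cref{bimodidcy} that escapes this: the map $\psi\colon A\to\rhom{A}[d]$ is not a quasi-isomorphism when $e\neq 0$; its cone $C$ is merely required to be $\RHom$-orthogonal to $\resdcatfd{\stab{A}}{A}$. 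Applying $-\ltens_A FY$ and $\RHom_A(-,FX)$ will leave a cone term $\RHom_A(C\ltens_A FY,FX)$ that has no reason to vanish when $FX$ is not supported on $\stab{A}$. So steps two and three of your plan cannot be carried out as stated, and the ``complete resolutions of $X,Y$'' do not help, because they change nothing about which $A$-modules are annihilated by $e$.

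The paper's actual proof sidesteps this by never trying to apply the duality to pairs of modules of the form $FX,FY$. Instead it invokes Kalck--Yang's recollement to produce a non-positive dg-algebra $C$ with $\cohom{0}{C}=\stab{A}$ such that $i_*=\RHom_C(C,-)$ identifies $\dcat{C}$ with $\resdcat{\stab{A}}{A}$ and $\dcatfd{C}$ with $\resdcatfd{\stab{A}}{A}$. Then for $M\in\dcatfd{C}$ and $N\in\per{C}$, the objects $i_*M$ \emph{do} lie in $\resdcatfd{\stab{A}}{A}$, so \Cref{bimodcydual} applies directly and shows $(\per{C},\dcatfd{C},\add{C})$ is a $d$-Calabi--Yau triple. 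The Iyama--Yang result then gives that $\per{C}/\dcatfd{C}$ is $(d-1)$-Calabi--Yau, and finally $\stabGP(B)$ is realised (via Kalck--Yang and Buchweitz) as a full triangulated subcategory of this quotient, inheriting the Calabi--Yau property. In other words, the correct move is to pass to a derived quotient where all objects in play are supported on $\stab{A}$, not to try to force the duality onto $\Ext_A(FX,FY)$. You would need to discover and deploy the Kalck--Yang machinery (or something equivalent) to close the gap; the workaround you sketch does not get there.
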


While, in general, checking that an algebra is internally $d$-Calabi--Yau with respect to an idempotent can be very difficult, there is more hope in the case that $A$ is a frozen Jacobian algebra (\Cref{frjacalg}). Such an algebra is presented via a quiver with relations, in which the relations are dual to some of the arrows; the arrows which do not have any corresponding relations are called frozen, and their end-points are frozen vertices. In \Cref{bimodcomplex}, we show that the required Calabi--Yau symmetry can be deduced from the exactness of a combinatorially defined complex $\res{A}\to A$ (\Cref{frjacres}), generalising work of Ginzburg \cite{ginzburgcalabiyau} for Jacobian algebras. More precisely, we prove the following.

\begin{thm*}[\Cref{frjaci3cy}]
\label{frjaci3cyintro}
If $A$ is a frozen Jacobian algebra such that $\res{A}$ is quasi-isomorphic to $A$, then $A$ is bimodule internally $3$-Calabi--Yau with respect to the idempotent $e=\sum_{v\in F_0}\idemp{v}$, where $F_0$ denotes the set of frozen vertices.
\end{thm*}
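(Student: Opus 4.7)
The plan is to verify the bimodule internally $3$-Calabi--Yau condition directly from the structure of the complex $\res{A}$. By hypothesis, $\res{A}\to A$ is a quasi-isomorphism of $A^e$-modules, so $\res{A}$ is a projective bimodule resolution of $A$ of length $3$. In particular $A$ is perfect as an $A^e$-module, and $\RHom_{A^e}(A,A^e)$ may be computed termwise as $\Hom_{A^e}(\res{A},A^e)$. The goal is then to exhibit an isomorphism between $\Hom_{A^e}(\res{A},A^e)[3]$ and $\res{A}$ in the derived category, modulo a correction supported on the frozen idempotent, matching the definition in \Cref{bimodidcy}.

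First I would dualise $\res{A}$ termwise. Each term is of the form $A\otimes_S X\otimes_S A$, where $S=\KK Q_0$ is the semisimple subalgebra and $X$ is a finite-dimensional $S$-bimodule built from the combinatorial data of the frozen quiver with potential (respectively: vertices in degree $0$, arrows in degree $-1$, dual relations attached to unfrozen arrows in degree $-2$, and non-frozen vertices in degree $-3$). The adjunction $\Hom_{A^e}(A\otimes_S X\otimes_S A,A^e)\iso A\otimes_S\Hom_S(X,S)\otimes_S A$ identifies the dual complex with another complex of free $A^e$-modules whose terms are expressed in the same combinatorial language, but with the roles of the two endpoints swapped: after the shift by $3$, the resulting complex has degree $0$ term indexed by the non-frozen vertices, whereas $\res{A}$ itself has degree $0$ term indexed by all vertices. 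This asymmetry between the two endpoints, arising from the fact that the frozen arrows contribute arrows but no dual relations, is precisely the source of the $e$-controlled error in the internally Calabi--Yau condition.

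Next I would construct an explicit comparison morphism of complexes $\Hom_{A^e}(\res{A},A^e)[3]\to\res{A}$ using the natural pairings between vertices, arrows, and dual relations that are already built into the definition of $\res{A}$. Checking that this is a chain map reduces to showing that the differentials, which are built from cyclic (and ordinary) derivatives of the potential $W$, are self-adjoint with respect to these pairings. This is a direct elaboration of Ginzburg's calculation for ordinary Jacobian algebras: cyclic invariance of the potential ensures the middle differential is self-adjoint under the arrow/dual-relation pairing, while the outer differentials correspond to left- and right-multiplication by arrows and are exchanged by the duality. Once the chain map is in place, standard homological algebra recovers the quasi-isomorphism $\res{A}\simeq A$ on the target.

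The main obstacle will be the book-keeping around the frozen contribution. One has to identify the cone of the comparison map and show that it is concentrated precisely on the bimodules arising from $e$, in the shape required by \Cref{bimodidcy}, rather than being merely bounded in $e$ in some weaker sense. This requires tracing through the effect of the duality on the frozen arrows (which are in degree $-1$ but have no dual in degree $-2$) and on the frozen vertices (which occur in degree $0$ but not in degree $-3$), and verifying that their combined contribution fits into the prescribed short exact sequence or distinguished triangle. The calculation is local on the quiver and essentially combinatorial, but care with signs and with the directionality of the $S$-bimodule pairings will be needed to ensure that the resulting isomorphism is genuinely one of $A^e$-modules.
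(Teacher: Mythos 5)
Your plan matches the paper's proof essentially step for step: dualise the bimodule resolution $\res{A}$ term by term via the adjunction $\Hom_{\env{A}}(A\tens_S X\tens_S A,\env{A})\iso A\tens_S\Hom_S(X,S)\tens_S A$, observe that self-adjointness of the middle differential follows from cyclic invariance of $W$, and identify the cone of the resulting comparison map with a complex supported on the frozen data $\KK F_i$, which then yields the required vanishing of $\RHom_A(C,M)$ and $\RHom_{A^\op}(C,N)$ because the cone terms factor as $Ae\tens_S V\tens_S eA$ and $eM$ is acyclic for $M\in\resdcatfd{\stab{A}}{A}$. One small correction: to match \Cref{bimodidcy}(iii) the comparison map must run $\res{A}\iso A\to\rhom{A}[3]$ rather than the reverse direction you wrote (this is what the split-exact columns in the paper's diagram produce, with inclusions of $\KK Q_i^\mut$ into $\KK Q_i$ in degrees $3,2$ and projections onto $\KK Q_i^\mut$ in degrees $1,0$), but this is an easy fix that does not affect the substance of the argument.
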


The results of Section~\ref{bimodcomplex} are inspired by work of Broomhead \cite{broomheaddimer} on consistent dimer models (also known as brane tilings or bipartite field theories) on closed surfaces, which has applications to theoretical physics. We expect our results to have consequences for the more recent theory of dimer models on surfaces with boundary, studied for example by Franco \cite{francobipartite}, and which has already appeared in the context of cluster categorification for the Grassmannian in work of Baur--King--Marsh \cite{baurdimer}. Dimer models on surfaces with boundary have also appeared, under the name `plabic graphs', in work of Postnikov \cite{postnikovtotal} on the positive Grassmannian, and in recent work of Goncharov \cite{goncharovideal}.

\section*{Notation and conventions}

Throughout, we fix a field $\KK$, and assume all categories are $\KK$-linear and all algebras are associative $\KK$-algebras with unit. If $V$ is a $\KK$-vector space, we write $\Kdual{V}=\Hom_\KK(V,\KK)$ for the dual space. All modules are unital, and are left modules unless otherwise indicated. Given an algebra $A$, we denote by $\Mod{A}$ the category of all $A$-modules, and by $\fgmod{A}$ the category of finitely generated $A$-modules. Given an object $X$ of an exact category $\frobcat$, we denote by $\add{X}$ the full subcategory of $\frobcat$ with objects isomorphic to finite direct sums of direct summands of $X$.  We say an algebra $A$ is Noetherian if it is Noetherian as both a left and right module over itself; this is stronger than requiring it to be Noetherian as an $A$-bimodule. We denote by $\dcat{A}$ and $\bdcat{A}$ the derived and bounded derived categories of $A$, and by $\dcatfd{A}$ the full subcategory of $\bdcat{A}$ consisting of objects with finite dimensional total cohomology. We denote by $\per{A}$ the thick triangulated subcategory of $\bdcat{A}$ generated by $A$ (or equivalently by the finitely generated projective $A$-modules). Objects of $\per{A}$ are called perfect, and we will say that an $A$-module $M$ is perfect if the stalk complex with $M$ in degree $0$ is perfect.

\section*{Acknowledgements}
This work is based on part of my Ph.D.\ thesis \cite{presslandfrobenius}*{\S5}, which I worked on at the University of Bath under the supervision of Alastair King, and with the support of an Engineering and Physical Sciences Research Council studentship. I am very grateful to Alastair for his support and encouragement. I also thank Charlie Beil, Martin Kalck, Joe Karmazyn and Dong Yang for useful conversations. This paper was revised during a stay at Universität Bielefeld funded by Sonderforschungsbereich 701; I am grateful to Henning Krause and the rest of the representation theory group in Bielefeld for their hospitality. After helpful comments from the referee, it was further revised during a stay at the Max-Planck-Institut für Mathematik in Bonn, funded by the Max-Planck-Gesellschaft.

\section{\texorpdfstring{Internally $d$-Calabi--Yau algebras}{Internally d-Calabi--Yau algebras}}
\label{defns}

This section introduces our main definitions.

\begin{defn}
\label{idcy}
Let $A$ be a $\KK$-algebra, $e$ an idempotent of $A$, and $d$ a non-negative integer. We say $A$ is \emph{internally $d$-Calabi--Yau} with respect to $e$ if
\begin{itemize}
\item[(i)]$\gldim{A}\leq d$, and
\item[(ii)]for each $i\in\ZZ$, there is a functorial duality
\[\Kdual\Ext_A^i(M,N)=\Ext_A^{d-i}(N,M)\]
where $M$ and $N$ are perfect $A$-modules such that $M$ is also a finite dimensional $A/\Span{e}$-module, where $\Span{e}=AeA$ is the two-sided ideal of $A$ generated by $e$.
\end{itemize}
\end{defn}

Recall that a module $M$ is perfect if the stalk complex with $M$ in degree $0$ is perfect, i.e.\ quasi-isomorphic to a bounded complex of finitely generated projective $A$-modules. Thus a perfect module is always both finitely generated and finitely presented, but the converse statements need not hold. If $A$ is Noetherian with finite global dimension (which will be the case for much of the paper, including the main theorems in \Cref{idcytodct}), then an $A$-module is perfect if and only it is finitely generated. We will also consider other situations in which perfectness is equivalent to a more familiar notion, and will try to indicate these as they arise.

Note the lack of symmetry in the finiteness conditions on $M$ and $N$; cf.\ \cite{kellercalabiyau}*{Lem.~4.1}. These conditions are imposed in order to force the space $\Ext_A^{d-i}(N,M)$ on the right-hand side of the duality formula to be finite dimensional over $\KK$.

\begin{rem}
If $A$ is internally $d$-Calabi--Yau with respect to $e$ then it is internally $d$-Calabi--Yau with respect to $e+e'$ for any idempotent $e'\in A$ orthogonal to $e$. An algebra $A$ is $d$-Calabi--Yau if and only if it is internally $d$-Calabi--Yau with respect to $0$, and hence with respect to every idempotent. At the other extreme, $A$ is internally $d$-Calabi--Yau with respect to $1$ if and only if $\gldim{A}\leq d$.
\end{rem}

\begin{rem}
\label{fdidcy}
A finite dimensional algebra $A$ is internally $d$-Calabi--Yau with respect to $e$ if and only if the same is true of $A^{\op}$; since $A$ is finite dimensional, it is Noetherian, so $\gldim{A}=\gldim{A^{\op}}$ \cite{weibelintroduction}*{Ex.~4.1.1}, and $\Kdual=\Hom_\KK(-,\KK)$ induces an equivalence $\fgmod{A^{\op}}\isoto(\fgmod{A})^{\op}$ yielding the required functorial duality for $A^{\op}$.
\end{rem}

\Cref{idcy} is not necessarily left-right symmetric in this way if $A$ is infinite dimensional, so we will also make a stronger definition that does have this property. Denote by $\env{A}=A\tens_\KK A^{\op}$ the enveloping algebra of $A$, so that an $A$-bimodule is the same as an $\env{A}$-module. Write $\rhom{A}=\RHom_{\env{A}}(A,\env{A})$. We view $\rhom{A}$ as a complex in $\dcat{\env{A}}$ via the `inner' multiplication on $\env{A}$; for any homomorphism $f\colon M\to\env{A}$ of $A$-bimodules such that $f(m)=u\tens v$ and any $x\tens y\in\env{A}$, let $xfy(m)=uy\tens xv$.

Recall \cite{amiotstable}*{Defn.~2.1} that $A$ is said to be \emph{bimodule $d$-Calabi--Yau} if $A\in\per{\env{A}}$ (i.e.\ $A$ is quasi-isomorphic to a bounded complex of finitely generated projective $A$-bimodules) and there is an isomorphism $A\isoto\rhom{A}[d]$ in $\dcat{\env{A}}$. This definition is slightly weaker than that of Ginzburg \cite{ginzburgcalabiyau}*{3.2.5}, as we will not need to impose any `self-duality' condition on the isomorphism.

If $A$ is an algebra with quotient $\stab{A}$, write $\resdcat{\stab{A}}{A}$ for the full subcategory of $\dcat{A}$ consisting of complexes with homology groups in $\Mod{\stab{A}}$, and $\resdcatfd{\stab{A}}{A}$ for the full subcategory of $\resdcat{\stab{A}}{A}$ consisting of objects with finite dimensional total cohomology.
\begin{defn}
\label{bimodidcy}
An algebra $A$ is \emph{bimodule internally $d$-Calabi--Yau} with respect to an idempotent $e\in A$ if
\begin{itemize}
\item[(i)]$\pdim_{\env{A}}{A}\leq d$,
\item[(ii)]$A\in\per{\env{A}}$, and
\item[(iii)]there exists a triangle
\[\begin{tikzcd}[column sep=20pt]
A\arrow{r}{\psi}&\rhom{A}[d]\arrow{r}&C\arrow{r}&A[1]
\end{tikzcd}\]
in $\dcat{\env{A}}$, such that
\[\RHom_A(C,M)=0=\RHom_{A^{\op}}(C,N)\]
for any $M\in\resdcatfd{\stab{A}}{A}$ and $N\in\resdcatfd{\stab{A}^{\op}}{A^{\op}}$, where $\stab{A}=A/\Span{e}$.
\end{itemize}
\end{defn}

\begin{rem}
An algebra $A$ is bimodule internally $d$-Calabi--Yau with respect to $0$ if and only if $\psi$ can be chosen to be a quasi-isomorphism, or equivalently if $A$ is bimodule $d$-Calabi--Yau. In this case, (i) follows from (ii) and (iii) \cite{amiotstable}*{Prop.~2.5(b)}. When $e\ne0$, this implication does not hold, and so we must make the stronger condition part of the definition. Note also that (i) does not imply (ii), since it asserts only that $A$ has a finite resolution by projective $\env{A}$-modules, whereas (ii) requires additionally that these modules are finitely generated.

An algebra $A$ is bimodule internally $d$-Calabi--Yau with respect to $1$ if and only if $\pdim_{\env{A}}{A}\leq d$ and $A\in\per{\env{A}}$; in this case $\stab{A}=0$, so condition (iii) is satisfied for any $\psi$.
\end{rem}

\begin{rem}
\label{lrsym}
There is an isomorphism $\env{A}\isoto\env{(A^{\op})}$ given by reversing the order of the tensor product. The resulting equivalence $\fgmod{\env{A}}\isoto\fgmod{\env{(A^{\op})}}$ takes $A$ to $A^{\op}$ (and $\stab{A}$ to $\stab{A}^{\op}$). As a result, Definition~\ref{bimodidcy} is left-right symmetric, meaning that $A$ is bimodule internally $d$-Calabi--Yau with respect to $e$ if and only if the same is true of $A^{\op}$.
\end{rem}

The following lemma, due to Keller, allows us to recover dualities of extension groups between $A$-modules from bimodule properties of $A$.

\begin{lem}[\cite{kellercalabiyau}*{Lem.~4.1}]
\label{ltensrhom}
Assume $A\in\per{\env{A}}$. For all objects $M,N\in\dcat{A}$ such that $M$ has finite dimensional total cohomology, there is a functorial isomorphism
\[\Kdual\Hom_{\dcat{A}}(M,N)\isoto\Hom_{\dcat{A}}(\rhom{A}\ltens_AN,M).\]
\end{lem}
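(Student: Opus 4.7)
The plan is to model both sides of the claimed isomorphism as zeroth cohomologies of explicit complexes of $\KK$-vector spaces built from a projective bimodule resolution of $A$, and then identify them via $\KK$-linear duality.

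Since $A\in\per{\env{A}}$, first choose a quasi-isomorphism $P^\bullet\to A$ in $\dcat{\env{A}}$ where $P^\bullet$ is a bounded complex of finitely generated projective $A$-bimodules. Each term decomposes as $P^i=\bigdsum_\alpha Ae_\alpha^i\tens_\KK f_\alpha^i A$ for idempotents $e_\alpha^i,f_\alpha^i\in A$, with $\env{A}$-linear dual $\bigdsum_\alpha e_\alpha^iA\tens_\KK Af_\alpha^i$ carrying the inner bimodule structure. Since each $P^i$ is projective, hence flat, as a right $A$-module, $P^\bullet\tens_AM$ is quasi-isomorphic to $M$ in $\dcat{A}$, and tensor--hom adjunction computes $\Hom_{\dcat{A}}(M,N)$ as the zeroth cohomology of a bounded complex $C_1^\bullet$ with terms $\bigdsum_\alpha\Hom_\KK(f_\alpha^iM,e_\alpha^iN)$. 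A parallel computation, now invoking $\rhom{A}\simeq\Hom_{\env{A}}(P^\bullet,\env{A})$, realises $\Hom_{\dcat{A}}(\rhom{A}\ltens_AN,M)$ as the zeroth cohomology of a complex $C_2^\bullet$ with terms $\bigdsum_\alpha\Hom_\KK(e_\alpha^iN,f_\alpha^iM)$.

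Next, the finite dimensional cohomology hypothesis on $M$ lets me replace $M$ by a quasi-isomorphic bounded complex of finite dimensional $A$-modules, so that each $f_\alpha^iM$ becomes finite dimensional. The elementary identity $\Kdual\Hom_\KK(V,W)\iso\Hom_\KK(W,V)$, valid when $V$ is finite dimensional, then gives a term-wise isomorphism between $\Kdual C_1^\bullet$ and $C_2^\bullet$ (modulo the usual degree reversal). Because $\Kdual$ is exact on $\KK$-vector spaces, it commutes with cohomology, and so
\[\Kdual\Hom_{\dcat{A}}(M,N)=\Kdual H^0(C_1^\bullet)\iso H^0(\Kdual C_1^\bullet)\iso H^0(C_2^\bullet)=\Hom_{\dcat{A}}(\rhom{A}\ltens_AN,M),\]
with functoriality in both arguments following from naturality of each adjunction invoked.

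The main obstacle will be verifying that this term-wise duality assembles into a genuine isomorphism of complexes: the differentials on $C_1^\bullet$ and $C_2^\bullet$ are both induced from those on $P^\bullet$, but travel through distinct chains of bimodule duals, tensor products and hom-adjunctions, so one must track signs and orderings carefully to confirm that they agree. Once this compatibility is checked, the remaining steps are formal consequences of standard adjunctions and the exactness of $\KK$-duality.
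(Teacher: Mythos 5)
The paper does not prove this lemma; it is quoted directly from Keller \cite{kellercalabiyau}*{Lem.~4.1}, so there is no internal argument to compare against. Your plan---resolve $A$ by a bounded complex $P^\bullet$ of finitely generated projective bimodules, realise both sides of the duality as cohomologies of explicit complexes built from $P^\bullet$, and match these term by term using $\Kdual\Hom_\KK(V,W)\iso\Hom_\KK(W,V)$ for finite dimensional $V$---is the right underlying mechanism. Keller's treatment avoids ever descending to individual terms by packaging it abstractly: since $A\in\per{\env{A}}$ one has $\RHom_A(M,N)\iso\rhom{A}\ltens_{\env{A}}\RHom_\KK(M,N)$; applying the exact functor $\Kdual$ converts the derived tensor into $\RHom_{\env{A}}(\rhom{A},\Kdual\RHom_\KK(M,N))$; one then identifies $\Kdual\RHom_\KK(M,N)$ with $\RHom_\KK(N,M)$ via a trace map, which is a quasi-isomorphism because $M$ has finite dimensional bounded cohomology, and finally rewinds the tensor--hom adjunction to land on $\RHom_A(\rhom{A}\ltens_AN,M)$.

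Two steps in your sketch need repair. First, the assertion that every finitely generated projective $\env{A}$-module splits as $\bigoplus_\alpha Ae_\alpha^i\tens_\KK f_\alpha^iA$ for idempotents $e_\alpha^i,f_\alpha^i\in A$ is false in general: such a module is a direct summand of a free module $(\env{A})^n$, cut out by an idempotent in $M_n(\env{A})$, and that idempotent need not be a sum of rank-one projections $e\tens f$. This is repairable---run the calculation for $(\env{A})^n$ and transport across the summand using additivity and functoriality of the adjunctions involved---but it should be made explicit. Second, and more substantively, the reduction of $M$ to a quasi-isomorphic bounded complex of \emph{finite dimensional} $A$-modules is not an automatic consequence of $M$ having bounded finite dimensional total cohomology for an arbitrary $\KK$-algebra: realising the connecting morphisms in the Postnikov tower of $M$ by honest chain maps with finite dimensional terms asks for resolutions that need not exist inside the category of finite dimensional $A$-modules. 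The more robust move is Keller's: do not dualise term by term, but exhibit the natural trace map $\RHom_\KK(N,M)\to\Kdual\RHom_\KK(M,N)$ of complexes of $A$-bimodules and check it is a quasi-isomorphism on cohomology, where the finiteness hypothesis on the (finitely many) cohomology groups of $M$ is exactly what is available.
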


If $A$ is bimodule $d$-Calabi--Yau, then $\rhom{A}\iso A[-d]$ in $\dcat{\env{A}}$. It then follows from \Cref{ltensrhom} that for any $M,N\in\Mod{A}$, with $M$ finite dimensional, we have
\begin{align*}
\Kdual\Ext^i_A(M,N)&=\Kdual\Hom_{\dcat{A}}(M,N[i])\\
&\iso\Hom_{\dcat{A}}(\rhom{A}\ltens_AN[i],M)\\
&\iso\Hom_{\dcat{A}}(N[i-d],M)\\
&=\Ext^{d-i}_A(N,M).
\end{align*}
We now use \Cref{ltensrhom} to prove a similar result for bimodule internally $d$-Calabi--Yau algebras.

\begin{thm}
\label{bimodcydual}
Let $A$ be bimodule internally $d$-Calabi--Yau with respect to $e$, and let $\stab{A}=A/\Span{e}$. Then for any $N\in\dcat{A}$ and any $M\in\resdcatfd{\stab{A}}{A}$, we have a functorial isomorphism
\[\Kdual\Hom_{\dcat{A}}(M,N)=\Hom_{\dcat{A}}(N[-d],M).\]
\end{thm}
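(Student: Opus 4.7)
The plan is to combine Keller's \Cref{ltensrhom} with the defining triangle of the bimodule internally $d$-Calabi--Yau condition. Since $A\in\per{\env{A}}$ by (ii) of \Cref{bimodidcy}, \Cref{ltensrhom} applies and produces a functorial isomorphism
\[\Kdual\Hom_{\dcat{A}}(M,N)\iso\Hom_{\dcat{A}}(\rhom{A}\ltens_AN,M).\]
Comparing with the statement, it therefore suffices to produce a natural isomorphism $\Hom_{\dcat{A}}(\rhom{A}\ltens_AN,M)\iso\Hom_{\dcat{A}}(N[-d],M)$.

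To construct it, apply the exact functor $-\ltens_AN$ to the triangle $A\to\rhom{A}[d]\to C\to A[1]$ from (iii) of \Cref{bimodidcy}. Using $A\ltens_AN\iso N$, this gives a triangle
\[N\to(\rhom{A}\ltens_AN)[d]\to C\ltens_AN\to N[1]\]
in $\dcat{A}$. Shifting by $[-d]$ and applying $\Hom_{\dcat{A}}(-,M)$ produces a four-term exact sequence whose middle arrow is precisely $\Hom_{\dcat{A}}(\rhom{A}\ltens_AN,M)\to\Hom_{\dcat{A}}(N[-d],M)$, flanked by two cohomology groups of $\RHom_A(C\ltens_AN,M)$. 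It therefore suffices to show that $\RHom_A(C\ltens_AN,M)=0$.

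For this, invoke the derived tensor-Hom adjunction
\[\RHom_A(C\ltens_AN,M)\iso\RHom_A(N,\RHom_A(C,M)),\]
in which the inner $\RHom_A(C,M)$ is computed with respect to the left $A$-action on $C$ complementary to the one absorbed into $-\ltens_AN$. Since $M\in\resdcatfd{\stab{A}}{A}$, condition (iii) of \Cref{bimodidcy} forces $\RHom_A(C,M)=0$, and the conclusion follows. The main point to watch is the bookkeeping of $A$-bimodule actions on $C$ when invoking the adjunction, to confirm that the vanishing supplied by (iii) is exactly the one required; but the two $A$-actions on $C$ play symmetric, independent roles, so the verification is routine.
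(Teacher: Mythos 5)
Your proof is correct and follows essentially the same route as the paper: tensor the defining triangle of \Cref{bimodidcy} by $N$, apply $\Hom_{\dcat{A}}(-,M)$, kill the $C$-term via the adjunction and condition (iii), and finish with Keller's \Cref{ltensrhom}. The only differences from the paper's exposition are cosmetic (you invoke \Cref{ltensrhom} at the start rather than at the end, and shift by $[-d]$ after tensoring rather than before), and your parenthetical remark about tracking the two $A$-actions on $C$ is a valid and correctly resolved bookkeeping point.
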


\begin{proof}
Pick a triangle
\[\begin{tikzcd}[column sep=20pt]
A[-d]\arrow{r}{\psi}&\rhom{A}\arrow{r}&C\arrow{r}&A[1-d]
\end{tikzcd}\]
by applying $[-d]$ to a triangle as in \Cref{bimodidcy}. Applying $-\ltens_AN$ yields a triangle
\[\begin{tikzcd}[column sep=20pt]
N[-d]\arrow{r}&\rhom{A}\ltens_AN\arrow{r}&C\ltens_AN\arrow{r}&N[1-d]
\end{tikzcd}\]
in $\dcat{A}$. Now apply $\RHom_A(-,M)$, to get a triangle
\[\begin{tikzcd}[column sep=-20pt]
\RHom_A(\rhom{A}\ltens_AN,M)\arrow{rr}&&\RHom_A(N[-d],M)\arrow{dl}{[1]}\\
&\RHom_A(C\ltens_AN,M)\arrow{ul}
\end{tikzcd}\]
Since $M\in\resdcatfd{\stab{A}}{A}$, we have $\RHom_A(C,M)=0$ by definition, and so
\[\RHom_A(C\ltens_AN,M)=\RHom_A(N,\RHom_A(C,M))=0.\]
Thus $\RHom_A(\rhom{A}\ltens_AN,M)\iso\RHom_A(N[-d],M)$. We obtain the desired result by taking $0$-th cohomology and applying \Cref{ltensrhom}.
\end{proof}

\begin{cor}
\label{bimodtocat}
If $A$ is bimodule internally $d$-Calabi--Yau with respect to $e$, then it is internally $d$-Calabi--Yau with respect to $e$, and moreover we have a functorial isomorphism
\[\Kdual\Ext_A^i(M,N)=\Ext_A^{d-i}(N,M)\]
for any finite dimensional $M\in\fgmod{A/\Span{e}}$ and any $N\in\Mod{A}$, which need not be perfect as $A$-modules.
\end{cor}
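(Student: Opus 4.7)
The plan is to deduce both claims directly from the hypotheses of \Cref{bimodidcy} together with the duality already established in \Cref{bimodcydual}. There are really two separate things to verify for \Cref{idcy}: the global dimension bound $\gldim A \leq d$, and the functorial duality. The ``moreover'' statement then turns out to be essentially free, since \Cref{bimodcydual} places no perfectness requirement on its $M$ or $N$.

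For the global dimension bound, I would fix a finite projective $\env{A}$-resolution $P_\bullet \to A$ of length at most $d$, which exists by \Cref{bimodidcy}(i). Each $P_i$ is a summand of a free $\env{A}$-module, and $\env{A} = A \tens_\KK A^{\op}$ is free (hence flat) as a right $A$-module, so each $P_i$ is flat as a right $A$-module. Therefore, for any $M \in \Mod{A}$, the complex $P_\bullet \tens_A M$ is a complex of projective left $A$-modules resolving $A \tens_A M = M$ in length $\leq d$, showing $\pdim_A M \leq d$ and hence $\gldim A \leq d$. This is standard; the only point is that bimodule projectivity forces one-sided flatness.

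For the duality, I would observe that any finite dimensional $M \in \fgmod{A/\Span{e}}$, viewed as a stalk complex in degree zero, lies in $\resdcatfd{\stab{A}}{A}$, since its total cohomology is $M$ itself, which is finite dimensional and annihilated by $\Span{e}$. Hence \Cref{bimodcydual} applies to such an $M$ and an arbitrary $N \in \Mod{A}$. Replacing $N$ by $N[i]$ in the theorem gives
\[\Kdual\Hom_{\dcat{A}}(M,N[i]) \iso \Hom_{\dcat{A}}(N[i-d],M),\]
which, using the standard identification $\Hom_{\dcat{A}}(X,Y[j]) = \Ext_A^j(X,Y)$ for modules $X,Y$, reads precisely as
\[\Kdual\Ext_A^i(M,N) \iso \Ext_A^{d-i}(N,M).\]
Functoriality is inherited from \Cref{ltensrhom} (and thence \Cref{bimodcydual}). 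Specialising further to perfect $M, N$ recovers \Cref{idcy}(ii), while keeping $N$ arbitrary yields the ``moreover'' clause.

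The hard part, if any, is conceptual rather than technical: one must recognise that the extra generality of the ``moreover'' statement over \Cref{idcy}(ii) costs nothing extra once \Cref{bimodcydual} is in hand, because the bimodule-level triangle has been used to kill the ``error term'' $C$ against \emph{any} $N \in \dcat{A}$, as long as $M$ satisfies the restrictive hypothesis of lying in $\resdcatfd{\stab{A}}{A}$. No perfectness of $N$ is needed. The flatness argument for the global dimension bound is routine.
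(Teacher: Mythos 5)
Your proposal is correct and takes essentially the same approach as the paper: the global dimension bound comes from tensoring the finite projective bimodule resolution with an arbitrary module, and the duality comes from applying \Cref{bimodcydual} to the stalk complex $M\in\resdcatfd{\stab{A}}{A}$ with $N$ replaced by $N[i]$. Your flatness remark is a useful bit of extra detail for the exactness of $P_\bullet\tens_A M$, but note that the step you leave tacit, namely that each $P_i\tens_A M$ is \emph{projective} over $A$ and not merely flat (because $P_i$ is a summand of a free $\env{A}$-module and $(A\tens_\KK A)\tens_A M\iso A\tens_\KK M$ is a free left $A$-module), is the one the paper chooses to emphasise instead.
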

\begin{proof}
Since $\pdim_{\env{A}}{A}\leq d$, there is an exact sequence
\[\begin{tikzcd}[column sep=20pt]
0\arrow{r}&P_d\arrow{r}&\dotsb\arrow{r}&P_1\arrow{r}&P_0\arrow{r}&A\arrow{r}&0
\end{tikzcd}\]
of $A$-bimodules, in which each $P_i$ is a projective $A$-bimodule. If $X$ is any $A$-module, then $P_i\tens_AX$ is a projective $A$-module, and so applying $-\tens_AX$ to the above sequence gives a projective resolution
\[\begin{tikzcd}[column sep=20pt]
0\arrow{r}&P_d\tens_AX\arrow{r}&\dotsb\arrow{r}&P_1\tens_AX\arrow{r}&P_0\tens_AX\arrow{r}&X\arrow{r}&0
\end{tikzcd}\]
of $X$. It follows that $\gldim{A}\leq d$.

Now by \Cref{bimodcydual}, if $N\in\Mod{A}$ and $M\in\fgmod{A/\Span{e}}$ is finite dimensional, we have
\begin{align*}
\Kdual\Ext^i_A(M,N)&=\Kdual\Hom_{\dcat{A}}(M,N[i])\\
&=\Hom_{\dcat{A}}(N[i-d],M)\\
&=\Ext^{d-i}_A(N,M).
\end{align*}
In particular, this duality applies when $M$ and $N$ are perfect $A$-modules, so $A$ is internally $d$-Calabi--Yau with respect to $e$.
\end{proof}

While \Cref{bimodcydual} and \Cref{bimodtocat} show that bimodule internally Calabi--Yau algebras have stronger properties than those required by the definition of internally Calabi--Yau, with the duality formula applying to a wider class of objects, we do not currently know of any examples of internally Calabi--Yau algebras which are not bimodule internally Calabi--Yau. It seems unlikely  that the two classes of algebras coincide, but it is tempting to speculate based on work of Bocklandt \cite{bocklandtgraded} that if $A$ is an internally Calabi--Yau algebra admitting a suitable grading, then it is also bimodule internally Calabi--Yau.

\begin{eg}
\label{i3cyeg}
Consider the algebra $A=\KK Q/I$ given by the quiver
\[Q=\mathord{\begin{tikzpicture}[baseline=0]
\node at (0,0.5) (1) {$1$};
\node at (2,0.5) (2) {$2$};
\node at (1,-0.5) (3) {$3$};
\path[-angle 90,font=\scriptsize]
	(1) edge node[above] {$\alpha_1$} (2)
	(2) edge node[below right] {$\alpha_2$} (3)
	(3) edge node[below left] {$\alpha_3$} (1);
\end{tikzpicture}}\]
with ideal of relations $I=\Span{\alpha_2\alpha_1,\alpha_1\alpha_3}$. This is an example of a frozen Jacobian algebra; see \Cref{frjacalg} and \Cref{frjacalgeg}. One can check (for example, by \Cref{frjaci3cy} below) that $A$ is bimodule internally $3$-Calabi--Yau with respect to $\idemp{1}+\idemp{2}$, and so both $A$ and $A^{\op}$ are internally $3$-Calabi--Yau with respect to this idempotent.

Similarly, the algebra $A'=\KK Q'/I'$ given by the quiver
\[Q'=\mathord{\begin{tikzpicture}[baseline=0]
\node at (-2,-0.75) (1) {$1$};
\node at (0,0.75) (2) {$2$};
\node at (2,-0.75) (3) {$3$};
\node at (0,-0.75) (4) {$4$};
\path[-angle 90,font=\scriptsize]
	(1) edge node[above left] {$\alpha_1$} (2)
	(2) edge node[right] {$\alpha_3$} (4)
	(4) edge node[below] {$\alpha_4$} (1)
	(4) edge node[below] {$\alpha_5$} (3)
	(3) edge node[above right] {$\alpha_2$} (2);
\end{tikzpicture}}\]
with ideal of relations $I'=\Span{\alpha_3\alpha_1,\alpha_1\alpha_4-\alpha_2\alpha_5,\alpha_3\alpha_2}$ is bimodule internally $3$-Calabi--Yau with respect to $\idemp{1}+\idemp{2}+\idemp{3}$.
\end{eg}

\section{\texorpdfstring{Frobenius $m$-cluster categories}{Frobenius m-cluster categories}}
\label{frobclustcats}

This section is devoted to describing a class of categories, which we term Frobenius $m$-cluster categories, providing us with a rich source of internally $(m+1)$-Calabi--Yau algebras. Indeed, certain categories in this class motivated \Cref{idcy}.

Recall that an exact category $\frobcat$ is Frobenius if it has enough projective objects and enough injective objects, and $\catproj{\frobcat}=\catinj{\frobcat}$. By a famous result of Happel \cite{happeltriangulated}*{\S I.2}, the stable category $\stab{\frobcat}=\frobcat/\catproj{\frobcat}$ is triangulated, with suspension given by the inverse syzygy $\syz{}^{-1}$, taking an object to the cokernel of an injective hull.

\begin{defn}
Let $\frobcat$ be a Frobenius category, and let $m\geq1$. We say that $\frobcat$ is \emph{stably $m$-Calabi--Yau} if its stable category $\stab{\frobcat}$ is $m$-Calabi--Yau, meaning that $\stab{\frobcat}$ is $\Hom$-finite, and there is a functorial duality
\[\Kdual\stabHom_\frobcat(X,Y)=\stabHom_\frobcat(Y,\syz{}^{-m}X)\]
for all $X,Y\in\frobcat$. Here $\stabHom_\frobcat(X,Y)$ denotes the space of morphisms $X$ to $Y$ in $\stab{\frobcat}$.
\end{defn}

\begin{defn}
\label{ctiltdef}
Let $\frobcat$ be an exact category, and let $\ctiltcat\subset\frobcat$ be a full and functorially finite subcategory closed under direct sums and direct summands. We say $\ctiltcat$ is an \emph{$m$-cluster-tilting subcategory} if
\[\set{X\in\frobcat:\Ext^i_\frobcat(X,\ctiltcat)=0,\:0<i<m}=\ctiltcat=\set{X\in\frobcat:\Ext^i_\frobcat(\ctiltcat,X)=0,\:0<i<m}.\]
Here `$\Ext^i_\frobcat(X,\ctiltcat)=0$' is taken to mean `$\Ext^i_\frobcat(X,T)=0$ for all $T\in\ctiltcat$'. We say an object $T\in\frobcat$ is an \emph{$m$-cluster-tilting object} if $\add{T}$ is an $m$-cluster-tilting subcategory.
\end{defn}

If $\frobcat$ is a Frobenius category, then for any $X,Y\in\frobcat$ and $i>0$, we have
\[\Ext^i_\frobcat(X,Y)=\stabHom_\frobcat(X,\syz{}^{-i}Y).\]
Thus if $\frobcat$ is stably $m$-Calabi--Yau, the two equalities appearing in \Cref{ctiltdef} are equivalent to one another.

\begin{defn}
\label{frobclustcat}
Let $\frobcat$ be a Frobenius category, and $m\geq1$. Then $\frobcat$ is called a \emph{Frobenius $m$-cluster category} if it is idempotent complete, stably $m$-Calabi--Yau, and $\gldim{\Endalg{\frobcat}{T}}\leq m+1$ for any $m$-cluster-tilting object $T$, of which there is at least one. If these properties hold for $m=2$, then $\frobcat$ will be called simply a \emph{Frobenius cluster category}.
\end{defn}

Note that while $\stab{\frobcat}$ is $\Hom$-finite for any Frobenius $m$-cluster category $\frobcat$, we do not assume that $\frobcat$ itself is $\Hom$-finite (cf.\ \Cref{jkscats}). By the following result, which follows immediately from a theorem of Keller--Reiten \cite{kellerclustertilted}*{5.4}, Frobenius $m$-cluster categories provide a rich source of examples of internally Calabi--Yau algebras.

\begin{thm}
\label{ctiltidcy}
Let $\frobcat$ be a Frobenius $m$-cluster category, let $T\in\frobcat$ be a basic $m$-cluster-tilting object, and write $A=\Endalg{\frobcat}{T}$. Let $P$ be a maximal projective-injective summand of $T$, and let $e\in A$ be the idempotent given by projection onto $P$. Then $A$ is internally $(m+1)$-Calabi--Yau with respect to $e$.
\end{thm}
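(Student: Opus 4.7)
The first condition $\gldim A\leq m+1$ is immediate from Definition~\ref{frobclustcat}, so the whole content of the theorem lies in establishing the functorial duality. The strategy is to reduce this to the Keller--Reiten theorem \cite{kellerclustertilted}*{5.4}, whose hypotheses mirror our setup almost exactly; the task is to verify that the class of pairs $(M,N)$ covered by \Cref{idcy} is within the scope of that result.

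My first step would be to set up the relevant functor. Since $T$ is $m$-cluster-tilting, $\Phi:=\Hom_\frobcat(T,-)\colon\frobcat\to\fgmod A$ induces an equivalence $\add T\isoto\catproj A$, and more generally computes $\Ext^i_A(\Phi X,\Phi Y)$ from data of $\frobcat$ for $i\leq m$. In particular, I would observe that an $A$-module $M$ satisfies $eM=0$ exactly when $M$ is annihilated by the idempotent ideal corresponding to the projective-injective summand $P$, and hence factors through the stable endomorphism algebra $\stab A=\stabEndalg{\frobcat}{T}$. Under $\Phi$, such finite-dimensional $M$ correspond to objects whose image in $\stab\frobcat$ is nontrivial, and the $m$-Calabi--Yau duality of $\stab\frobcat$ transfers to extension groups over $A$ via the standard dictionary $\Ext^i_\frobcat(X,Y)=\stabHom_\frobcat(X,\syz{}^{-i}Y)$ for $i>0$.

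The core of the argument is then to combine $\gldim A\leq m+1$ with the stable $(m)$-Calabi--Yau property, together with the characterisation of $m$-cluster-tilting objects, to produce the Serre-type duality
\[\Kdual\Ext^i_A(M,N)\iso\Ext^{m+1-i}_A(N,M)\]
whenever $M$ is a finite dimensional $A$-module with $eM=0$ and $N$ is perfect over $A$. This is exactly the content of Keller--Reiten \cite{kellerclustertilted}*{5.4}: writing a projective resolution of $M$ in terms of objects of $\add T$ (which exists because $M$ is perfect) and a projective resolution of $N$ likewise, one computes both sides using the equivalence $\add T\isoto\catproj A$ and reduces the symmetry to Auslander--Reiten--Serre duality on the $m$-cluster-tilting subcategory $\add T\subset\frobcat$, supplemented by the fact that $\syz{}^{m+1}M$ lies in $\add T$ (equivalently, $\Phi$ applied to an injective coresolution of the object representing $M$ terminates within the image of $\add T$ at step $m+1$).

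The main obstacle I would expect is book-keeping around the perfectness hypotheses rather than any deep difficulty: we have not assumed $A$ is Noetherian, so being finitely generated is weaker than being perfect, and one must be careful that the projective resolutions used above genuinely consist of finitely generated projectives (so that the pairings reside in finite dimensional spaces and the Serre duality actually matches up). Once one checks that both $M$ (finite dimensional with $eM=0$) and $N$ (perfect) admit bounded resolutions by objects of $\add T$, the computation is formal. I would therefore present the proof as a short invocation of \cite{kellerclustertilted}*{5.4}, remarking that the hypothesis $\gldim A\leq m+1$ is built into the definition of Frobenius $m$-cluster category and that the idempotent $e$ is by construction the one singled out in \textit{loc.~cit.}
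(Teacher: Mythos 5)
Your proposal matches the paper's approach exactly: the proof is a one-step invocation of Keller--Reiten's Theorem 5.4, applied to perfect $M,N$ with $M$ a finite dimensional $A/\Span{e}$-module, combined with the bound $\gldim A\leq m+1$ that is built into Definition~\ref{frobclustcat}. Your middle paragraph sketching the internal mechanics of that theorem goes into more detail than the paper and contains some imprecision (the book-keeping about resolutions in $\add T$, and the assertion that $\syz{}^{m+1}M$ lies in $\add T$, which conflates $A$-module syzygies with syzygies in $\frobcat$), but since the proof rests on citing \cite{kellerclustertilted}*{5.4} as a black box, none of this is load-bearing and the argument is correct.
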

\begin{proof}
Let $M$ and $N$ be perfect $A$-modules, and assume that $M$ is a finite dimensional $A/\Span{e}$-module. Then by \cite{kellerclustertilted}*{5.4}, there is a functorial duality
\begin{align*}
\Ext^i_A(M,N)&=\Hom_{\per{A}}(M[-i],N)\\
&=\Kdual\Hom_{\per{A}}(N,M[m+1-i])=\Kdual\Ext^{m+1-i}_A(N,M)
\end{align*}
for all $i$. Since $\gldim{A}\leq m+1$ by assumption, we have the desired result.
\end{proof}

\begin{rem}
If $\frobcat$ is a Frobenius $m$-cluster category such that $A=\Endalg{\frobcat}{T}$ is Noetherian for any $m$-cluster tilting object $T$, then $\frobcat^{\op}$ is a Frobenius $m$-cluster category with the same $m$-cluster-tilting objects as $\frobcat$; the extra assumption is used to ensure that
\[\gldim A^{\op}=\gldim{A}\leq m+1.\]
Thus, under these circumstances, $A^{\op}$ is also internally $(m+1)$-Calabi--Yau with respect to $e$.
\end{rem}

In the context of \Cref{ctiltidcy}, we have a more straightforward characterisation of the $A/\Span{e}$-modules that are perfect as $A$-modules, again coming from Keller--Reiten's work.

\begin{prop}
Let $\frobcat$ be a Frobenius $m$-cluster category, let $T\in\frobcat$ be a basic $m$-cluster-tilting object, and write $A=\Endalg{\frobcat}{T}$. Let $P$ be a maximal projective-injective summand of $T$, let $e\in A$ be the idempotent given by projection onto $P$ and write $\stab{A}=A/\Span{e}$. Then an $\stab{A}$-module $M$ is perfect as an $A$-module if and only if it is finitely generated.
\end{prop}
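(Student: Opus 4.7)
The ``only if'' direction is immediate from the definition: if $M$ is perfect, it is the zeroth cohomology of a bounded complex of finitely generated projective $A$-modules, and hence is itself finitely generated as an $A$-module, and therefore as an $\stab{A}$-module once one notes that $\Span{e}M=0$.

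For the converse, the first observation is that $\stab{A}=A/\Span{e}$ is canonically identified with $\stabEnd_\frobcat(T)^{\op}$, which is finite dimensional over $\KK$ since $\stab{\frobcat}$ is $\Hom$-finite by the definition of a Frobenius $m$-cluster category. Consequently any finitely generated $\stab{A}$-module $M$ is in fact finite dimensional over $\KK$. The plan is to realise $M$ in the form $\Hom_\frobcat(T,X)$ for some $X\in\frobcat$, and then use the $m$-cluster-tilting structure of $T$ to produce a finite resolution of $M$ by finitely generated projective $A$-modules. To construct $X$, pick a two-term presentation $P_1\to P_0\to M\to0$ of $M$ with $P_0,P_1$ finitely generated projective $A$-modules, and lift the map $P_1\to P_0$ via Yoneda to a morphism $g\colon T'_1\to T'_0$ in $\add T\subset\frobcat$. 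The Frobenius structure then allows $X$ to be built as an appropriate ``cokernel'' of $g$, in the spirit of the proof of \cite{kellerclustertilted}*{5.4}, so that $\Hom_\frobcat(T,X)\iso M$ as $A$-modules.

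Once such $X$ is in hand, the higher Auslander–Reiten / cluster-tilting theory of Iyama provides an exact sequence
\[
0\to T_k\to\dotsb\to T_1\to T_0\to X\to0
\]
in $\frobcat$ with each $T_i\in\add T$ and $k$ bounded in terms of $m$. Applying $F=\Hom_\frobcat(T,-)$ yields a complex of finitely generated projective $A$-modules ending in $FX\iso M$. This complex is exact: breaking the resolution into short exact conflations $0\to Y_{i+1}\to T_i\to Y_i\to0$, each map $T_i\to Y_i$ is a right $\add T$-approximation, so applying $F$ gives an exact sequence with $\Ext^1_\frobcat(T,Y_{i+1})=0$, and then the rigidity hypothesis $\Ext^j_\frobcat(T,T)=0$ for $0<j<m$ forces the higher $\Ext$ groups of the kernels to vanish as well. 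The resulting finite resolution exhibits $M$ as perfect.

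The main obstacle is constructing $X$ in $\frobcat$: since $\frobcat$ is not in general abelian, the cokernel of a morphism in $\add T$ does not directly exist as an object of $\frobcat$, and one must either pass to a derived enlargement or exploit the specific Frobenius structure. This is precisely where the Frobenius $m$-cluster category hypothesis is used, rather than the weaker statement that $A$ has global dimension at most $m+1$, and it is the technical heart of the Keller–Reiten machinery invoked here.
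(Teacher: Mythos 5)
The ``only if'' direction and the observation that $\stab{A}=\stabEndalg{\frobcat}{T}$ is finite dimensional by $\Hom$-finiteness of $\stab\frobcat$ are both correct and match the paper. But the paper's proof of the converse is essentially a one-line reduction: since $\stab{A}$ is finite dimensional, any finitely generated $\stab{A}$-module is automatically finitely presented over $\stab A$, and then one cites Keller--Reiten \cite{kellerclustertilted}*{5.4(c)} to conclude that such a module is perfect over $A$. You instead attempt to re-derive the content of that lemma, and the re-derivation has a genuine gap.

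Specifically, your plan to ``realise $M$ in the form $\Hom_\frobcat(T,X)$ for some $X\in\frobcat$'' cannot work: if $M$ is a nonzero $\stab{A}$-module then $eM=0$, so $\Hom_\frobcat(T,X)\iso M$ would force $\Hom_\frobcat(P,X)=eM=0$. In a Frobenius category every nonzero object admits a deflation from $\add P$, so $\Hom_\frobcat(P,X)=0$ forces $X=0$ and hence $M=0$. What you actually want is $M\iso\stabHom_\frobcat(T,X)$, i.e.\ the \emph{stable} Hom, and then the book-keeping around the resolution by $\add T$-objects changes: the functor $\stabHom_\frobcat(T,-)$ is only cohomological on triangles in $\stab\frobcat$, and the projectives appearing after applying it are projective $\stab A$-modules, not projective $A$-modules, so exhibiting $M$ as \emph{perfect over $A$} needs additional argument (this is precisely what Keller--Reiten 5.4 supplies). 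You flag the non-existence of cokernels in $\frobcat$ as the ``technical heart'', but the $\Hom$-versus-$\stabHom$ confusion is the more fundamental obstruction, and it is not addressed. In short: the correct key step is simply to invoke Keller--Reiten 5.4(c), which the paper does; if you want to reproduce its proof you need to work with the stable Hom-functor throughout and then lift back to $A$-module projective resolutions.
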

\begin{proof}
Let $M\in\fgmod{\stab{A}}$. If $M$ is perfect over $A$, then it is in particular finitely generated over $A$, and thus (by the same generating set) over $\stab{A}$. Since $\stab{A}=\stabEndalg{\frobcat}{T}$ and $\stab{\frobcat}$ is $\Hom$-finite, $\stab{A}$ is finite dimensional. It follows that any finitely generated $\stab{A}$-module is finitely presented over $\stab{A}$, and thus perfect over $A$ by \cite{kellerclustertilted}*{5.4(c)}.
\end{proof}

The following proposition, which is based on work of Iyama \cite{iyamaauslander}*{\S2}, \cite{iyamahigher}*{Thm.~3.6.2}, gives sufficient conditions on a Frobenius category $\frobcat$ for us to conclude that $\gldim{\Endalg{\frobcat}{T}}\leq m+1$ for any $m$-cluster-tilting object $T\in\frobcat$, as well as providing a more straightforward characterisation of the perfect modules over this endomorphism algebra.

\begin{prop}
\label{gldimbound}
Let $\frobcat\subseteq\abcat$ be a full, extension closed, Frobenius subcategory of an abelian category $\abcat$ such that
\begin{itemize}
\item[(i)]$\abcat$ has enough projectives,
\item[(ii)]$\frobcat$ contains $\catproj{\abcat}$,
\item[(iii)]$\frobcat$ is closed under kernels of epimorphisms, and
\item[(iv)]for any $X\in\abcat$, and any exact sequence
\[\begin{tikzcd}[column sep=20pt]
0\arrow{r}&Y\arrow{r}&P_m\arrow{r}&\cdots\arrow{r}&P_0\arrow{r}&X\arrow{r}&0
\end{tikzcd}\]
with $P_i\in\catproj{\abcat}$, we have $Y\in\frobcat$.
\end{itemize}
Let $T\in\frobcat$ be $m$-cluster-tilting, and write $A=\Endalg{\frobcat}{T}$. Then an $A$-module $M$ is perfect if and only if it is finitely presented, and for such $M$ we have $\pdim_A{M}\leq m+1$. It follows that if $A$ is Noetherian, then $\gldim{A}\leq m+1$.
\end{prop}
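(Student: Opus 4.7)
The plan is to adapt Iyama's approach for bounding the global dimension of endomorphism algebras of $m$-cluster-tilting subcategories of abelian categories (\cite{iyamaauslander}*{\S 2}, \cite{iyamahigher}*{Thm.~3.6.2}) to the Frobenius subcategory setting, using conditions (i)--(iv) to bridge between $\frobcat$ and the ambient $\abcat$. A preliminary observation is that $\catproj\abcat\subseteq\add T$: condition (ii) gives $\catproj\abcat\subseteq\frobcat$, extension-closedness of $\frobcat$ in $\abcat$ promotes this to $\catproj\abcat\subseteq\catproj\frobcat$ (since any admissible conflation in $\frobcat$ is also a short exact sequence in $\abcat$), and the Frobenius property combined with the $m$-cluster-tilting characterisation forces projective--injective objects in $\frobcat$ into $\add T$ (as they have vanishing higher $\Ext$ with $T$).

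The main technical step is to construct, for any $Y\in\frobcat$, an exact sequence
\[
0\to T_{m-1}\to\cdots\to T_0\to Y\to 0
\]
in $\frobcat$ with each $T_i\in\add T$. This is built iteratively by right $\add T$-approximations $T_i\to Y_i$, which are surjective by the preliminary observation (projective covers factor through them), with kernels $Y_{i+1}\in\frobcat$ by condition (iii). An induction on the long exact Ext sequences produced by $F=\Hom_\frobcat(T,-)$, exploiting $\Ext^j_\frobcat(T,T_i)=0$ for $1\le j<m$ and the surjectivity of the approximations, shows $\Ext^j_\frobcat(T,Y_k)=0$ for $1\le j\le k$; after $m-1$ iterations $Y_{m-1}\in\add T$ by the $m$-cluster-tilting condition. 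Applying $F$ then yields a projective resolution of $FY$ in $\fgmod A$ of length at most $m-1$, whence $\pdim_A FY\le m-1$ for all $Y\in\frobcat$.

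To handle a general finitely presented $A$-module $M$, begin with a presentation $FT^{n_1}\xrightarrow{Ff}FT^{n_0}\to M\to 0$, lift $f$ to a morphism in $\frobcat$, and form $X=\coker f$ in $\abcat$. Conditions (i) and (iv) provide an exact sequence
\[
0\to Y\to P_m\to\cdots\to P_0\to X\to 0
\]
in $\abcat$ with $Y\in\frobcat$ and each $P_i\in\catproj\abcat\subseteq\add T$ by the preliminary observation. Combining this with the $\add T$-resolution of $Y$ from the previous step, and identifying the first two terms via the original presentation of $M$, produces a projective resolution of $M$ in $\fgmod A$ of length at most $m+1$, whose terms are all finitely generated. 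This simultaneously shows $M$ is perfect and $\pdim_A M\le m+1$. Conversely, any perfect module is finitely presented, being the $0$-th cohomology of a bounded complex of finitely generated projectives. When $A$ is Noetherian, every finitely generated $A$-module is finitely presented, so $\gldim A\le m+1$ follows.

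The main obstacle is the careful bookkeeping in the last step: a naive splicing of the projective resolution of $X$ in $\abcat$ (of length $m+1$) with the $\add T$-resolution of $Y$ in $\frobcat$ (of length $m-1$) gives a combined length of $2m$, so achieving the sharp bound $m+1$ requires a subtle use of the Ext-vanishing coming from the $m$-cluster-tilting property in tandem with the syzygy structure provided by condition (iv), tracking precisely how the presentation of $M$ is recovered from the resolution of $X$ under the functor $F$.
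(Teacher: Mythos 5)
Your preliminary observation that $\catproj\abcat\subseteq\add T$ and your construction of a finite $\add T$-resolution for any $Y\in\frobcat$ are both correct, but the final step has a genuine gap. After lifting the presentation $\Hom_\frobcat(T,T_1)\xrightarrow{Ff}\Hom_\frobcat(T,T_0)\to M\to 0$ to $f\colon T_1\to T_0$, you set $X=\coker f$; but since $F=\Hom_\frobcat(T,-)$ is left exact, the first syzygy of $M$ is $\ker(Ff)=F(\ker f)$, so the object you need to resolve is $\ker f$, not $\coker f$. Applying $F$ to an $\abcat$-projective resolution of $\coker f$ does not produce a resolution of $M$: $F$ is not right exact, so the image complex need not be exact, and the canonical map $M\to FX$ is in general neither injective (its kernel is $F(\im f)/\im(Ff)$) nor surjective (obstructed by $\Ext^1_\abcat(T,\im f)$). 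Your proposed splice therefore has no module to resolve, and your own bookkeeping ($2m$ instead of $m+1$) already signals that the argument does not close; the appeal to a ``subtle use of Ext-vanishing'' is not carried out.

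The paper instead sets $K_1=\ker f$ and, for $1\le j\le m-1$, takes surjective right $\add T$-approximations $r_j\colon T_{j+1}\to K_j$ in $\abcat$ (surjective because $\catproj\abcat\subseteq\add T$ and $\abcat$ has enough projectives), with $K_{j+1}=\ker r_j$. Conditions (i)--(iv) together with \cite{iyamaauslander}*{Prop.~2.6} give $K_m\in\frobcat$; dimension-shifting along $0\to K_{j+1}\to T_{j+1}\to K_j\to 0$, using $\Ext^i_\frobcat(T,T')=0$ for $T'\in\add T$ and $0<i<m$ together with the surjectivity of $\Hom_\frobcat(T,r_j)$, then shows $\Ext^i_\frobcat(T,K_m)=0$ for $0<i<m$, so $K_m\in\add T$. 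The same Ext-vanishing ensures that applying $F$ to $0\to K_m\to T_m\to\cdots\to T_1\xrightarrow{f}T_0$ is still exact, yielding a projective resolution of $M$ of length $m+1$ with finitely generated terms. The essential point your proof is missing is that the resolution must be built from $\add T$-approximations from the start, so that $F$ carries it to an exact sequence; a generic $\catproj\abcat$-resolution does not have this property.
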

\begin{proof}
As already noted, any perfect $A$-module is finitely presented. For the converse, let $M$ be a finitely presented $A$-module with presentation
\[\begin{tikzcd}[column sep=20pt]
\Hom_\frobcat(T,T_1)\arrow{r}{f_*}\arrow{r}&\Hom_\frobcat(T,T_0)\arrow{r}&M\arrow{r}&0,
\end{tikzcd}\]
for some $T_0,T_1\in\add{T}$, and form the exact sequence
\[\begin{tikzcd}[column sep=20pt]
0\arrow{r}&K_1\arrow{r}&T_1\arrow{r}{f}&T_0
\end{tikzcd}\]
in $\abcat$. By \cite{iyamaauslander}*{Prop.~2.6}, the subcategory $\frobcat$ is contravariantly finite in $\abcat$, and hence so is $\add{T}$. Working inductively for $1\leq j\leq m-1$, let $r_j\colon T_{j+1}\to K_j$ be a right $\add{T}$-approximation of $K_j$, and let $K_{j+1}$ be its kernel (in $\abcat$). By additionally defining $K_0=\im{f}$ and $K_{-1}=\coker{f}$, we obtain exact sequences
\[\begin{tikzcd}[column sep=20pt]
0\arrow{r}&K_j\arrow{r}&T_j\arrow{r}&K_{j-1}\arrow{r}&0
\end{tikzcd}\]
for all $0\leq j\leq m$. Combining these to form the exact sequence
\[\begin{tikzcd}[column sep=20pt]
0\arrow{r}&K_{m}\arrow{r}&T_{m}\arrow{r}&\cdots\arrow{r}&T_0\arrow{r}&K_{-1}\arrow{r}&0,
\end{tikzcd}\]
we may use \cite{iyamaauslander}*{Prop.~2.6} again to see that $K_{m}\in\frobcat$.

Next we check that $K_{m}\in\add{T}$, which we do by checking that $\Ext^i_\frobcat(T,K_{m})=0$ for $0<i<m$. For any $j$, we may apply $\Hom_\frobcat(T,-)$ to the short exact sequence
\[\begin{tikzcd}[column sep=20pt]
0\arrow{r}&K_{j+1}\arrow{r}&T_{j+1}\arrow{r}&K_j\arrow{r}&0
\end{tikzcd}\]
to find, using that $\Ext^i_\frobcat(T,T_i)=0$ for $0<i<m$, that $\Ext^i_\frobcat(T,K_{j+1})=\Ext^{i-1}_\frobcat(T,K_j)$ for $1<i<m$. Moreover, if $j\geq1$, the map $T_{j+1}\to K_j$ in the above sequence is the right $\add{T}$-approximation $r_j$, for which $\Hom_\frobcat(T,r_j)$ is surjective, so $\Ext^1_\frobcat(T,K_{j+1})=0$. It follows that for $0<i<m$, we have
\[\Ext^i_\frobcat(T,K_{m})=\Ext^{i-1}_\frobcat(T,K_{m-1})=\dotsb=\Ext^1_\frobcat(T,K_{m-i+1})=0,\]
as required.

It follows from the above calculations that the sequence
\[\begin{tikzcd}[column sep=20pt]
0\arrow{r}&\Hom_\frobcat(T,K_{j+1})\arrow{r}&\Hom_\frobcat(T,T_{j+1})\arrow{r}&\Hom_\frobcat(T,K_j)\arrow{r}&0
\end{tikzcd}\]
is exact for all $j\geq1$. Thus, writing $T_{m+1}=K_m\in\add{T}$, applying $\Hom_\frobcat(T,-)$ to the sequence
\[\begin{tikzcd}[column sep=20pt]
0\arrow{r}&T_{m+1}\arrow{r}&T_{m}\arrow{r}&\cdots\arrow{r}&T_1\arrow{r}{f}&T_0
\end{tikzcd}\]
yields a projective resolution of $M$ by finitely generated projective $A$-modules, and so $M$ is perfect with $\pdim_A{M}\leq m+1$.

For the final statement, if $A$ is Noetherian, then every finitely generated $A$-module is also finitely presented, and thus has projective dimension at most $m+1$, as required.
\end{proof}

We now recall some work of Iyama--Kalck--Wemyss--Yang \cite{iyamafrobenius}, which gives a normal form for a Frobenius $m$-cluster category that is `small enough', in the sense it admits an $m$-cluster-tilting object with Noetherian endomorphism algebra. It will follow from this description that any such Frobenius $m$-cluster category may be embedded into an abelian category in such a way that all of the assumptions of \Cref{gldimbound} hold. We begin with the following definitions.

\begin{defn}
\label{gpdef}
An algebra $B$ is \emph{Iwanaga--Gorenstein} if it is Noetherian and has finite injective dimension as both a left and right module over itself. In this case, the left and right injective dimensions coincide, and are called the \emph{Gorenstein dimension} of $B$. For brevity, an Iwanaga--Gorenstein algebra with Gorenstein dimension $d$ will be called \emph{$d$-Iwanaga--Gorenstein}. If $B$ is such an algebra, we write
\[\GP(B)=\set{X\in\fgmod{B}:\Ext^i_B(X,B)=0,\ i>0}=\syz{}^d(\fgmod{B})\]
for the category of \emph{Gorenstein projective} $B$-modules. This is a full, extension closed subcategory of $\fgmod{B}$, and is a Frobenius category under the inherited exact structure \cite{buchweitzmaximal}*{\S4.8}. This category is sometimes \cite{amiotstable}, \cite{jensencategorification} denoted by $\CM(B)$, and the objects called maximal Cohen--Macaulay $B$-modules, but we do not do this since for certain choices of $B$ there exist definitions of Cohen--Macaulay not coinciding with this one \cite{iyamafrobenius}*{Rem.~3.3}.
\end{defn}

If $B$ is an $(m+1)$-Iwanaga--Gorenstein algebra, then the assumptions of \Cref{gldimbound} hold for the exact subcategory $\frobcat=\GP(B)$ of the abelian category $\abcat=\fgmod{B}$. Thus the endomorphism algebra of an $m$-cluster-tilting object of $\GP(B)$ has global dimension at most $m+1$ whenever it is Noetherian.

\begin{thm}[\cite{iyamafrobenius}*{Thm.~2.7}]
\label{kiwythm}
Let $\frobcat$ be an idempotent complete Frobenius category such that $\catproj{\frobcat}=\add{P}$ for some $P\in\frobcat$. Assume there exists $M\in\frobcat$ such that $P\in\add{M}$, the endomorphism algebra $A=\Endalg{\frobcat}{M}$ is Noetherian, and $\gldim{A}=d<\infty$. Then $B=\Endalg{\frobcat}{P}$ is Iwanaga--Gorenstein of Gorenstein dimension at most $d$, and there is an equivalence
\[\Hom_\frobcat(P,-)\colon\frobcat\isoto\GP(B).\]
\end{thm}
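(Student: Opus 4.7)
The strategy is a Morita-type argument centred on the functor $F = \Hom_\frobcat(P,-) \colon \frobcat \to \fgmod B$. I would establish in turn that $F$ is well-defined, fully faithful, lands in $\GP(B)$, is essentially surjective onto $\GP(B)$, and then deduce the Iwanaga--Gorenstein property of $B$. The hypothesis about $M$ and $A$ enters crucially at the last two stages; everything prior uses only that $P$ is the projective-injective progenerator of $\frobcat$.

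I would begin by checking that $F$ takes values in $\fgmod B$. For $X \in \frobcat$, embedding $X$ into its injective hull in $\frobcat$ and iterating once yields a conflation $0 \to X \to P^0 \to P^1$ with $P^i \in \add P$, and applying $F$ (which is exact on conflations, since $P$ is projective in $\frobcat$) exhibits $F(X)$ as a kernel between finitely generated projective $B$-modules. The classical Morita theorem gives $F|_{\add P} \colon \add P \isoto \catproj B$. To extend full faithfulness to all of $\frobcat$, I would take a projective presentation of $X$ in $\frobcat$ by $\add P$-objects and compare the exact sequences obtained from $\Hom_\frobcat(-,Y)$ and $\Hom_B(F(-),F(Y))$ via this Morita identification. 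To show $F(X) \in \GP(B)$, take a full projective resolution $P_\bullet \to X$ in $\frobcat$ by objects of $\add P$; applying $F$ yields a projective resolution of $F(X)$, and writing $B \iso F(P^*)$ via Morita, we obtain
\[\Ext^i_B(F(X),B) \iso \Ext^i_\frobcat(X,P^*) = 0\]
for $i>0$, the vanishing coming from injectivity of $P^*$ in $\frobcat$.

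For essential surjectivity and the Gorenstein bound, I would invoke $M$. Writing $e \in A = \Endalg{\frobcat}{M}$ for the idempotent projecting to $P$, one has $B = eAe$, and $F$ factors as $\Hom_\frobcat(M,-)$ followed by the corner functor $e(-)\colon \fgmod A \to \fgmod B$. The functor $\Hom_\frobcat(M,-)\colon \frobcat \to \fgmod A$ is itself fully faithful by the same argument as for $F$, with $M$ in place of $P$. Given $Y \in \GP(B)$, I would form the induced $A$-module $Ae \tens_B Y$, take a projective resolution of length at most $d$ over $A$ using $\gldim A = d$, apply $e(-)$, and reconstruct an object $X \in \frobcat$ with $F(X) \iso Y$ using the fully faithful $\Hom_\frobcat(M,-)$ on $\add M$; the extension-closure and idempotent-completeness of $\frobcat$ ensure $X$ lies in $\frobcat$. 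The same construction shows that any $Y \in \fgmod B$ satisfies $\Ext^{>d}_B(Y,B)=0$, so the injective dimension of $B$ as a left $B$-module is at most $d$; the left-right symmetric argument yields the same bound on the other side, establishing that $B$ is Iwanaga--Gorenstein of dimension at most $d$.

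The main obstacle is this final stage: successfully descending resolutions through the corner functor $e(-)$ and verifying that the reconstructed objects genuinely lie in $\frobcat$ rather than merely in the ambient abelian category $\fgmod A$. Finite global dimension of $A$ is what bounds the resolution length (and hence the Gorenstein dimension of $B$), Noetherianity of $A$ ensures that finite generation is preserved, and the extension-closure and idempotent-completeness of $\frobcat$, together with the hypothesis $P \in \add M$, ensure compatibility between the $\frobcat$ and $\fgmod A$ pictures. I would expect the precise bookkeeping of these compatibilities to occupy the bulk of the technical work.
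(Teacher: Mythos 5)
The paper does not prove this result; it is quoted from Iyama--Kalck--Wemyss--Yang \cite{iyamafrobenius}*{Thm.~2.7}, so there is no proof in the paper to compare against. Evaluating your sketch on its own terms: the first four stages are sound. The functor $F=\Hom_\frobcat(P,-)$ lands in $\fgmod{B}$ (though your copresentation argument uses Noetherianity of $B$, which you have not yet established at that point; a projective presentation $P_1\to P_0\onto X$ gives $F(X)$ directly as a finitely presented cokernel and avoids this), restricts to a Morita equivalence $\add{P}\isoto\catproj{B}$, is fully faithful by the five lemma, and sends objects to $\GP(B)$ via $\Ext^i_B(FX,B)\iso\Ext^i_\frobcat(X,P)=0$. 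The Gorenstein dimension bound also works as you describe: the unit $Y\to e(Ae\tens_BY)$ is an isomorphism (both functors are right exact in $Y$ and agree on $\add{B}$), and applying the exact functor $e(-)=\Hom_A(Ae,-)$ to a length-$\leq d$ resolution of $Ae\tens_BY$ by finitely generated projective $A$-modules $Q_i=\Hom_\frobcat(M,X_i)$ yields a length-$\leq d$ resolution of $Y$ by the modules $eQ_i=F(X_i)\in\GP(B)$; dimension shifting then gives $\Ext^{>d}_B(Y,B)=0$. You should also note that $P\in\add{M}$ need not make $P$ an actual summand of $M$, so $e$ need not lie in $A$; replace $M$ by $M\dsum P$, which is harmless up to Morita equivalence.

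The genuine gap is essential surjectivity, and it is more than bookkeeping. Your resolution $0\to F(X_d)\to\cdots\to F(X_0)\to Y\to 0$ exhibits $Y$ as a cokernel in $\fgmod{B}$ of a morphism $F(f_1)$ for some $f_1\colon X_1\to X_0$ in $\frobcat$, but $\frobcat$ is merely an exact category: $f_1$ has no reason to admit a cokernel there, and neither extension-closure nor idempotent-completeness produces one. "Reconstruct an object $X\in\frobcat$ with $F(X)\iso Y$ using the fully faithful $\Hom_\frobcat(M,-)$" therefore names the conclusion rather than the argument. One needs to use the Gorenstein-projectivity of $Y$ in an essential way --- for instance by taking a totally acyclic complex of projective $B$-modules with $Y$ as a syzygy, lifting its terms and differentials through the Morita equivalence $\add{P}\isoto\catproj{B}$ to a complex in $\add{P}\subset\frobcat$, and then arguing (using that $P$ is also injective, so that the lifted differentials are admissible, and that $\frobcat$ is closed under extensions and idempotents) that the corresponding syzygy object lies in $\frobcat$ and maps to $Y$ under $F$. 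As written, the sketch stops exactly where this nontrivial lifting argument should begin.
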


\begin{cor}
\label{normalform}
Let $\frobcat$ be a Frobenius $m$-cluster category and let $T\in\frobcat$ be an $m$-cluster-tilting object such that $\Endalg{\frobcat}{T}$ is Noetherian. Let $P$ be a maximal projective summand of $T$, and write $B=\Endalg{\frobcat}{P}$. Then $B$ is Iwanaga--Gorenstein of Gorenstein dimension at most $m+1$, and $\frobcat\simeq\GP(B)$.
\end{cor}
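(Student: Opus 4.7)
The plan is to deduce this as a direct application of Theorem~\ref{kiwythm} (the Iyama--Kalck--Wemyss--Yang result) with the choice $M=T$, so most of the work will be verifying that the hypotheses of that theorem are all in place, and then tracking the numerical bound on the Gorenstein dimension.

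First I would observe that $\frobcat$ is idempotent complete by definition of a Frobenius $m$-cluster category (\Cref{frobclustcat}), and that the endomorphism algebra $A=\Endalg{\frobcat}{T}$ has global dimension at most $m+1$ (again from \Cref{frobclustcat}) and is Noetherian by hypothesis. The one point that deserves explicit verification is the identification $\catproj{\frobcat}=\add{P}$, where $P$ is the maximal projective summand of $T$. This is the standard observation that in a Frobenius category, every projective object $X$ is also injective, so $\Ext^i_\frobcat(X,T)=0=\Ext^i_\frobcat(T,X)$ for all $i>0$; hence by the characterisation in \Cref{ctiltdef}, any such $X$ lies in $\add T$. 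Thus every indecomposable projective of $\frobcat$ is a summand of $T$, so $\catproj{\frobcat}\subseteq\add{P}$, and the reverse inclusion is immediate from the definition of $P$.

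With these hypotheses verified, \Cref{kiwythm}, applied with $M=T$ and $d=\gldim A\leq m+1$, directly yields that $B=\Endalg{\frobcat}{P}$ is Iwanaga--Gorenstein of Gorenstein dimension at most $d\leq m+1$, and that $\Hom_\frobcat(P,-)\colon\frobcat\isoto\GP(B)$ is an equivalence, which is exactly the conclusion we want.

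I do not expect any real obstacle here: the statement is essentially a repackaging of \Cref{kiwythm} in the language of Frobenius $m$-cluster categories, and the only mildly nontrivial point is the inclusion $\catproj{\frobcat}\subseteq\add{T}$ forced by cluster-tilting, which is a one-line consequence of the defining Ext-vanishing conditions.
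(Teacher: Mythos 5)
Your proof is correct and takes the same route as the paper: verify the hypotheses of \Cref{kiwythm} with $M=T$, using idempotent completeness and $\gldim A\leq m+1$ from \Cref{frobclustcat}, Noetherianity by hypothesis, and $\catproj{\frobcat}=\add{P}$ from the cluster-tilting Ext-vanishing conditions (as the paper notes, $\catproj{\frobcat}\subseteq\add{T}$ is immediate since projectives are also injectives). The paper states the $\catproj{\frobcat}\subseteq\add{T}$ step more tersely, but your unpacking of it is exactly the intended argument.
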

\begin{proof}
By definition, $\frobcat$ is idempotent complete and $\gldim{\Endalg{\frobcat}{T}}\leq m+1$. Since $T$ is $m$-cluster-tilting, $\catproj{\frobcat}\subseteq\add{T}$, and so we have $\catproj{\frobcat}=\add{P}$. Now the result follows from \Cref{kiwythm}.
\end{proof}

Under the notation and assumptions of \Cref{normalform}, let $e$ be the idempotent of $A=\Endalg{\frobcat}{T}$ given by projection onto $P$, with respect to which $A$ is internally $(m+1)$-Calabi--Yau by \Cref{ctiltidcy}. Then the algebra $B=\Endalg{\frobcat}{P}$ is the idempotent subalgebra $eAe$. We will see below that the Gorenstein dimension of $B$ may be strictly less than $m+1$. An interesting question suggested by \Cref{normalform}, to which we have no good answer at this stage, is the following: can one find reasonable conditions on an Iwanaga--Gorenstein algebra $B$ under which $\GP(B)$ is a Frobenius $m$-cluster category?

The remainder of the section is devoted to examples. We describe two families of examples of Frobenius cluster categories, one $\Hom$-finite and the other $\Hom$-infinite, to which \Cref{ctiltidcy} applies to show that the endomorphism algebras of cluster-tilting objects are internally $3$-Calabi--Yau with respect to projection onto a maximal projective-injective summand. We also give a family of examples of Frobenius $1$-cluster categories arising as part of the algebraic McKay correspondence.

\begin{eg}
\label{birscats}
Buan--Iyama--Reiten--Scott \cite{buancluster} construct a family of $\Hom$-finite stably $2$-Calabi--Yau Frobenius categories $\Sub{\Pi_\omega}$. Here $\Pi=\preproj{\Delta}$ is the preprojective algebra associated to a graph $\Delta$, and $\omega$ is a finite product of simple reflections in the Weyl group of $\Delta$. The algebra $\Pi_\omega$ is a (finite dimensional) quotient of $\Pi$, and $\Sub{\Pi_\omega}$ is the full subcategory of $\fgmod{\Pi_\omega}$ given by objects isomorphic to submodules of direct sums of copies of $\Pi_\omega$. Then $\Sub{\Pi_\omega}$ is closed under extensions and subobjects (in particular under kernels of epimorphisms), and contains $\catproj{\Pi_\omega}$ and $\syz{}(\fgmod{\Pi_\omega})$. Since $\Sub{\Pi_\omega}$ is $\Hom$-finite, it follows from \Cref{gldimbound} that it is a Frobenius cluster category.

We note that the categories $\Sub{\Pi_\omega}$ constructed by Buan--Iyama--Reiten--Scott contain an important class of categories considered by Geiß--Leclerc--Schröer \cite{geisspartial}, which we will also describe. For $\Delta$ a Dynkin diagram, let $\Pi$ be the preprojective algebra of type $\Delta$. If $j$ is a node of $\Delta$, write $\inj{j}$ for the injective $\Pi$-module with socle at $j$. Then for any subset $J$ of the nodes of $\Delta$, we may write $\inj{J}=\bigdsum_{j\in J}\inj{j}$, and consider the category $\Sub{\inj{J}}$ of $\Pi$-modules isomorphic to a submodule of a direct sum of copies of $Q_J$, or equivalently of those $\Pi$-modules with socle supported on $J$. The category $\Sub{\inj{J}}$ models a cluster algebra structure on the coordinate ring of a dense open subset of the partial flag variety attached to the data of $\Delta$ and $J$. For example, when $\Delta$ is of type $\typeA{n}$ and $J$ consists of a single node, this partial flag variety is a Grassmannian. If $\omega_0$ is the longest word in the Weyl group of type $\Delta$, and $\omega_0^K$ is the longest word in the subgroup generated by simple reflections at nodes not in $J$, then \cite{geisskacmoody}*{Lem.~17.2} we have
\[\Sub{\inj{J}}=\Sub{\Pi_{\omega_0^K\omega_0}}.\]
In particular, the categories $\Sub{\inj{J}}$ are Frobenius cluster categories.

If $\Pi$ is the preprojective algebra of Dynkin type $\Delta$, then we have $\fgmod{\Pi}=\Sub{\Pi}=\Sub{\Pi_{\omega_0}}$, where $\omega_0$ is the longest word in the Weyl group of type $\Delta$, so $\fgmod{\Pi}$ is a Frobenius cluster category. The algebra $A$ appearing in \Cref{i3cyeg} is isomorphic to the endomorphism algebra of a cluster-tilting object in $\fgmod{\Pi}$ for $\Pi$ the preprojective algebra of type $\typeA{2}$, and is thus internally $3$-Calabi--Yau by \Cref{ctiltidcy}. Similarly, the algebra $A'$ from \Cref{i3cyeg} is isomorphic to the endomorphism algebra of a cluster-tilting object in $\Sub{\Pi_{s_2s_1s_3s_2}}=\Sub{\inj{2}}$, where $\Pi$ is the preprojective algebra of type $\typeA{3}$ and $\inj{2}$ is the indecomposable injective module with socle at the bivalent vertex $2$, and so $A'$ is also internally $3$-Calabi--Yau.

The category of projective objects of $\Sub{\Pi_\omega}$ is given by $\add{\Pi_\omega}$. Since $\Sub{\Pi_\omega}$ is a $\Hom$-finite Frobenius cluster category, it follows from \Cref{normalform} that $\Sub{\Pi_\omega}\simeq\GP(\Pi_\omega)$. Since $\Pi_w$ has Gorenstein dimension at most $1$ \cite{buancluster}*{Prop.~III.2.2}, we even have $\Sub{\Pi_\omega}=\GP(\Pi_\omega)$ as full subcategories of $\fgmod{\Pi_w}$. Note that the Gorenstein dimension of $\Pi_w$ is strictly smaller than the bound provided by \Cref{normalform}.
\end{eg}

\begin{eg}
\label{jkscats}
Our second family of examples was introduced by Jensen--King--Su \cite{jensencategorification} to categorify the cluster algebra structure on the homogeneous coordinate ring of the Grassmannian $\Grass{k}{n}$ of $k$-planes in $\CC^n$. Each category in this family is of the form $\CM(B)$ for a Gorenstein order $B$ (depending on positive integers $1<k<n$) over $Z=\powser{\CC}{t}$. One description of $B$ is as follows. Let $\Delta$ be the graph (of affine type $\afftypeA{n-1}$) with vertex set given by the cyclic group $\ZZ_n$, and edges between vertices $i$ and $i+1$ for all $i$. Let $\Pi$ be the completion of the preprojective algebra on $\Delta$ with respect to the arrow ideal. Write $x$ for the sum of `clockwise' arrows $i\to i+1$, and $y$ for the sum of `anti-clockwise' arrows $i\to i-1$. Then we have
\[B=\Pi/\Span{x^k-y^{n-k}}.\]
In this description, $Z$ may be identified with the centre $\powser{\CC}{xy}$ of $B$.

Objects of $\CM(B)$ are $B$-modules that are free and finitely generated over $Z$. Since $Z$ is a principal ideal domain, and hence Noetherian, any submodule of a free and finitely generated $Z$-module is also free and finitely generated, and so $\CM(B)$ is closed under subobjects. In particular, $\CM(B)$ is closed under kernels of epimorphisms. Moreover \cite{jensencategorification}*{Cor.~3.7}, $B\in\CM(B)$, and so $\syz{}(\fgmod{B})\subseteq\CM(B)$.

As a $Z$-module, any object $M\in\CM(B)$ is isomorphic to $Z^k$ for some $k$, and so $\Endalg{Z}{M}\iso Z^{k^2}$ is a finitely generated $Z$-module. Since $Z$ is Noetherian, the algebra $\Endalg{B}{M}\subseteq\Endalg{Z}{M}$ is also finitely generated as a $Z$-module. Thus $\Endalg{B}{M}$ is Noetherian, as it is finitely generated as a module over the commutative Noetherian ring $Z$. We may now apply \Cref{gldimbound} to see that any cluster-tilting object $T\in\CM(B)$ satisfies $\gldim{\Endalg{B}{T}}\leq 3$. Moreover \cite{jensencategorification}*{Cor.~4.6}, $\stabCM(B)=\stabSub{\inj{k}}$, where $\inj{k}$ is an indecomposable injective module for the preprojective algebra of type $\typeA{n-1}$ (see \Cref{birscats}), so $\stabCM(B)$ is $2$-Calabi--Yau, and $\CM(B)$ is a Frobenius cluster category. This category is not $\Hom$-finite, unlike the categories $\Sub{\Pi_\omega}$.

The algebra $B$ is $1$-Iwanaga--Gorenstein, so the Gorenstein dimension is again strictly smaller than the bound in \Cref{normalform}, and we have equalities $\CM(B)=\GP(B)=\Sub{B}$ \cite{jensencategorification}*{Cor.~3.7}.

Baur--King--Marsh \cite{baurdimer}*{Thm.~10.3} show that for certain cluster-tilting objects $T\in\CM(B)$, the endomorphism algebra $\Endalg{B}{T}$ is isomorphic to a frozen Jacobian algebra (\Cref{frjacalg}) associated to a dimer model on a disk, with the projection onto a maximal projective-injective summand corresponding to the sum of idempotents at the frozen vertices. By \Cref{ctiltidcy}, these dimer algebras, which satisfy a natural consistency condition \cite{baurdimer}*{\S5}, are internally $3$-Calabi--Yau with respect to their boundary idempotent; cf.\ Broomhead \cite{broomheaddimer}*{\S7}, who shows that consistent dimer models on closed surfaces give rise to $3$-Calabi--Yau Jacobian algebras.
\end{eg}

\begin{eg}
\label{mckay}
The algebraic McKay correspondence provides many examples of Frobenius $1$-cluster categories. Let the special linear group $\SL_2(\CC)$ act on $\powser{\CC}{x,y}$ in the natural way. Let $G$ be a finite subgroup of $\SL_2(\CC)$, and consider the invariant ring $R=\powser{\CC}{x,y}^G$. For example, if $G$ is cyclic of order $n$, generated by
\[\begin{pmatrix}\omega&0\\0&\omega^{-1}\end{pmatrix}\]
for some primitive $n$-th root of unity $\omega$, then $R=\powser{\CC}{x^n,xy,y^n}$.

A well-known result of Herzog \cite{herzogringe} shows that $\powser{\CC}{x,y}$ is an additive generator (or equivalently, a $1$-cluster-tilting object) of the Frobenius category $\CM(R)$ of maximal Cohen--Macaulay $R$-modules. By computing Auslander--Reiten sequences in $\CM(R)$, as in Leuschke--Wiegand  \cite{leuschkecohenmacaulay}*{Prop.~13.22}, one can see that the Auslander--Reiten translation on $\CM(R)$ is trivial, and so $\CM(R)$ is stably $1$-Calabi--Yau. Let $T$ be a basic $R$-module such that $\add_R{T}=\add_R{\powser{\CC}{x,y}}$, so that $T$ is the unique (up to isomorphism) basic $1$-cluster-tilting object of $\CM(R)$. By Auslander's Theorem \cite{auslanderrational} and a result of Reiten--Van den Bergh \cite{reitentwodimensional}, there are isomorphisms
\begin{align*}
\Endalg{R}{T}&\isoto\preproj{\widetilde{\Delta}},\\
\stabEndalg{R}{T}&\isoto\preproj{\Delta},
\end{align*}
where $\widetilde{\Delta}$ is the extended Dynkin diagram given by the McKay graph of $G$, and $\Delta$ is its unextended counterpart. It is well-known that $\preproj{\Delta}$ is finite dimensional, so $\stabCM(R)$ is $\Hom$-finite, and that $\gldim{\preproj{\widetilde{\Delta}}}\leq2$. Thus $\CM(R)$ is a Frobenius $1$-cluster category.

The algebra $\Endalg{R}{T}\iso\preproj{\widetilde{\Delta}}$ is bimodule $2$-Calabi--Yau, which is consistent with (but stronger than) the conclusion of \Cref{ctiltidcy}.
\end{eg}

\section{\texorpdfstring{From internally $d$-Calabi--Yau algebras to $d$-cluster-tilting objects}{From internally d-Calabi--Yau algebras to d-cluster-tilting objects}}
\label{idcytodct}

\Cref{ctiltidcy} shows how internally $(m+1)$-Calabi--Yau algebras arise as endomorphism algebras of cluster-tilting objects in Frobenius $m$-cluster categories. In this section we work in the opposite direction, and show how to construct a Frobenius category admitting a $(d-1)$-cluster-tilting object from the data of an internally $d$-Calabi--Yau algebra, thus generalising a result of Amiot--Iyama--Reiten \cite{amiotstable}*{Thm.~2.3} on bimodule $d$-Calabi--Yau algebras. Since we will work only with Noetherian algebras, by \Cref{normalform} we should expect to produce categories of the form $\GP(B)$ for some Iwanaga--Gorenstein algebra $B$, and indeed this is what we shall do. Our main result is the following.

\begin{thm}[cf.~\cite{amiotstable}*{Thm.~2.3}]
\label{airanalogue}
Let $A$ be a Noetherian algebra and let $e\in A$ be an idempotent such that $A/\Span{e}$ is finite dimensional, and both $A$ and $A^{\op}$ are internally $d$-Calabi--Yau with respect to $e$. Write $B=eAe$ and $\stab{A}=A/\Span{e}$. Then
\begin{itemize}
\item[(i)]$B$ is Iwanaga--Gorenstein with Gorenstein dimension at most $d$, so $\GP(B)$ is a Frobenius category,
\item[(ii)]$eA$ is $(d-1)$-cluster-tilting in $\GP(B)$, and
\item[(iii)]there are natural isomorphisms $\Endalg{B}{eA}\isoto A$ and $\Endalg{\stabGP(B)}{eA}\isoto\stab{A}$.
\end{itemize}
\end{thm}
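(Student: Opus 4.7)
The plan is to adapt the proof of Amiot--Iyama--Reiten's \cite{amiotstable}*{Thm.~2.3}, with the one-sided internal $d$-Calabi--Yau assumptions on both $A$ and $A^{\op}$ playing the role of their bimodule Calabi--Yau hypothesis. The central tool is the exact functor $M \mapsto eM$ from $\fgmod A$ to $\fgmod B$, which agrees with both $\Hom_A(Ae, -)$ and $eA \tens_A -$, and which admits $Ae \tens_B -$ as a left adjoint. Since $A$ is Noetherian with $\gldim A \leq d$, every finitely generated $A$-module is perfect, so the duality of \Cref{idcy}(ii) is available whenever at least one argument is a finite dimensional $\stab A$-module.

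\textbf{Key vanishing and the Gorenstein property.} The first concrete step would be to exploit the finite dimensionality of $\stab A$: taking $M = \stab A$ and $N = A$ in the internal Calabi--Yau duality yields $\Ext^i_A(\stab A, A) = 0$ for $i < d$ together with $\Ext^d_A(\stab A, A) \iso \Kdual \stab A$, and the analogous vanishing $\Ext^i_{A^{\op}}(\stab A, A) = 0$ for $i < d$ follows from the internal $d$-Calabi--Yau property of $A^{\op}$. For part (i), I would take a projective resolution $P_\bullet \to \stab A$ of length at most $d$ over $A$, apply $\Hom_A(-, A)$ and combine with the above vanishing to extract a finite coresolution of $B$ by summands of $Ae$ (as a right $B$-module) and of $eA$ (as a left $B$-module), yielding left and right injective dimension bounds of at most $d$. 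Thus $B$ is Iwanaga--Gorenstein of Gorenstein dimension at most $d$ and $\GP(B)$ is Frobenius.

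\textbf{Cluster-tilting and endomorphism algebras.} The containment $eA \in \GP(B)$ in (ii) reduces via derived adjunction to the vanishing $\Ext^i_{A^{\op}}(\stab A, A) = 0$ for $i > 0$ established above, and the rigidity $\Ext^i_B(eA, eA) = 0$ for $0 < i < d - 1$ is analogous. The converse direction of the cluster-tilting condition---that for $X \in \GP(B)$, the vanishing $\Ext^i_B(eA, X) = 0$ for $0 < i < d - 1$ forces $X \in \add(eA)$---is the main obstacle. I would attack it by lifting $X$ to the $A$-module $Ae \tens_B X$ and using the Calabi--Yau duality on $A$ to bound the cones of the adjunction unit and counit, which the finite dimensionality of $\stab A$ will force to be supported on $\stab A$ in a manageable cohomological range. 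The delicate point, absent from the bimodule Calabi--Yau setting of \cite{amiotstable}, is that the duality in \Cref{idcy} only applies to objects killed by $e$, so these error terms must be tracked very carefully. Finally, for (iii), the natural map $A \to \Endalg{B}{eA}$ given by right multiplication on $eA$ is analysed via the same adjunction, with its kernel and cokernel controlled by $\stab A$; the stable version then follows by identifying $\catproj \GP(B) = \add B$ and factoring out morphisms through these projectives, which corresponds exactly to passing to the quotient $A/\Span e = \stab A$.
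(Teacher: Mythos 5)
Your overall strategy is right in spirit — adapt Amiot--Iyama--Reiten, exploit the adjunction between $Ae\tens_B-$ and $e(-)$, and extract the key vanishing $\Ext^i_A(X,A)=0$ for $i\ne d$ (and $\Ext^i_A(X,Ae)=0$ for all $i$) when $X\in\fgmod{\stab A}$. This matches the paper's \Cref{airextvanish}. But there are genuine gaps beyond just "tracking error terms carefully," and your proposed routes for (i) and for the converse cluster-tilting direction do not obviously close.

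The central missing step is the computation of $\RHom_B(eA,B)$ and $\RHom_B(eA,eA)$, which the paper carries out in \Cref{extcalcs} via the triangle
\[Ae\ltens_BeA\map{f}A\to X\to Ae\ltens_BeA[1]\]
in $\dcat{A}$, whose cone $X$ has cohomology in $\fgmod{\stab A}$ because $eA\tens_Af$ is an isomorphism. Combined with \Cref{airextvanish} and a hypercohomology spectral sequence, this gives $\Ext^i_B(eA,B)\iso\begin{cases}Ae,&i=0,\\0,&i\ne0,\end{cases}$ and $\Ext^i_B(eA,eA)=0$ for $0<i<d-1$ with $\Endalg{B}{eA}\iso A$. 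These isomorphisms are what actually power (i), (ii), and (iii); without them the argument doesn't get off the ground. You do hint at this cone via "adjunction unit and counit," but the precise vanishing inputs (especially that $\Ext^j_A(\cohom{i}{X},Ae)=0$ for all $i,j$) are not in your sketch.

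Your argument for (i) — applying $\Hom_A(-,A)$ to a projective resolution of $\stab A$ to "extract a finite coresolution of $B$ by summands of $Ae$" — does not clearly produce an injective coresolution of $B$ over itself: the terms $\Hom_A(P_i,A)e$ are projective $B^{\op}$-modules, not injective ones, and the sequence obtained after multiplying by $e$ on the right no longer even contains $B$ as a term (since $\Hom_A(\stab A,A)=0$ and $\Kdual{\stab A}\cdot e=0$). The paper instead shows $\Ext^{d+1}_B(X,B)=0$ for all $X\in\fgmod B$ directly: set $Y=Ae\tens_BX$, take an $A$-projective resolution $\bP$ of $Y$, note $e\bP$ is a complex in $\add(eA)$ quasi-isomorphic to $X$, and use $\Ext^{>0}_B(eA,B)=0$ from \Cref{extcalcs} plus $\Hom_B(eA,B)\iso Ae$ to identify $\Ext^{d+1}_B(X,B)\iso\Ext^{d+1}_A(Y,Ae)=0$.

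For the converse cluster-tilting direction, your plan of bounding cones of the counit $Ae\tens_BX\to$ (something) is vague and is not how the paper proceeds. The paper uses the Gorenstein duality $\Hom_B(-,B)\colon\GP(B)\isoto\GP(B^{\op})$ together with two inputs from the $A^{\op}$-hypothesis: first, $\pdim_{A^{\op}}\Hom_B(X,eA)\leq d-2$ (a second-syzygy bound using $\gldim{A^{\op}}\leq d$), which combined with a $\GP$-syzygy coresolution of $X$ shows $\Hom_B(X,eA)$ is a projective $A^{\op}$-module, hence $\Hom_B(X,B)\in\add_{B^{\op}}(Ae)$; second, the $A^{\op}$-version of \Cref{extcalcs} giving $\Hom_{B^{\op}}(Ae,B)\iso eA$, so that dualising back yields $X\in\add_B(eA)$. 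The other inclusion reduces to this by dualising. This is precisely where both one-sided internal Calabi--Yau hypotheses are used in an essential way, which your sketch does not exhibit.
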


\begin{rem}
\label{airanalogueop}
While all of the conclusions of \Cref{airanalogue}, except for $B$ being Iwanaga--Gorenstein, refer only to left $B$-modules, the proof we will give uses the assumption that $A^{\op}$ is internally $d$-Calabi--Yau to draw conclusions about right $A$-modules. This applies in particular to showing that the right $A$-module $eA$ is cluster-tilting in the category of Gorenstein projective $B$-modules; see \Cref{ctiltleft,ctiltright}.

Since the assumptions of \Cref{airanalogue} are symmetric in $A$ and $A^{\op}$, we may also conclude that $\GP(B^{\op})$ is a Frobenius category in which $Ae$ is a $(d-1)$-cluster-tilting object, and there are natural isomorphisms $\Endalg{B^{\op}}{Ae}\isoto A^{\op}$ and $\Endalg{\stabGP(B^{\op})}{Ae}\isoto\stab{A}^{\op}$.

We emphasise two cases in which the assumptions of \Cref{airanalogue} may be made to appear one-sided. Firstly, as in \Cref{fdidcy}, if $A$ is a finite dimensional algebra then it is internally $d$-Calabi--Yau with respect to $e$ if and only if the same is true of $A^{\op}$. Secondly, if $A$ is bimodule internally $d$-Calabi--Yau with respect to $e$, then (\Cref{lrsym}) so is $A^{\op}$, and therefore both $A$ and $A^{\op}$ are internally $d$-Calabi--Yau with respect to $e$ by \Cref{bimodtocat}.
\end{rem}

We note that Amiot--Iyama--Reiten's result \cite{amiotstable}*{Thm.~2.3} is a special case of our \Cref{airanalogue}. To obtain the same conclusions, they assume that $A$ is Noetherian, $A/\Span{e}$ is finite dimensional, and that $A$ is bimodule $d$-Calabi--Yau. By \Cref{bimodtocat}, this means that both $A$ and $A^{\op}$ are internally $d$-Calabi--Yau with respect to any idempotent, in particular with respect to $e$.

The rest of the section is largely devoted to proving \Cref{airanalogue}, so we let $A$, $e$, $\stab{A}$ and $B$ be as in the assumptions of this theorem. We begin with the following straightforward observation.

\begin{prop}
\label{Bnoetherian}
The algebra $B$ is Noetherian.
\end{prop}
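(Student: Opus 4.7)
The plan is to prove the left-Noetherian and right-Noetherian properties of $B$ separately by exactly the same argument, so I will focus on the left-Noetherian case. The basic idea is a standard induction/restriction between ideals of $A$ and ideals of $B = eAe$: given an ascending chain of left ideals of $B$, expand it to a chain of left ideals of $A$, use that $A$ is Noetherian, and then contract back. The only thing to verify is that the contraction operation recovers the original ideal, so no information is lost.

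Concretely, let $J_1 \subseteq J_2 \subseteq \cdots$ be an ascending chain of left ideals of $B$. For each $i$, $AJ_i$ is a left ideal of $A$, and these form an ascending chain, which must stabilise since $A$ is left Noetherian; say $AJ_n = AJ_{n+1} = \cdots$ for some $n$. The key claim is then
\[
J_i = e\,(AJ_i)\,e \quad\text{for every } i,
\]
from which $J_n = J_{n+1} = \cdots$ follows immediately. To verify this, observe that since $J_i \subseteq B = eAe$, we have $J_i = eJ_i = J_i e$, and in particular $J_i \subseteq eAJ_i$. Conversely,
\[
e(AJ_i)e = eA(eJ_i)e = (eAe)(J_i e) = B J_i \subseteq J_i,
\]
the last containment holding because $J_i$ is a left $B$-module. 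This establishes the claim, and hence that $B$ is left Noetherian.

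The analogous argument, applied to descending a chain of right ideals of $B$ via the operation $J \mapsto JA$ and then contracting by $J = e(JA)e$, shows that $B$ is right Noetherian; equivalently, one may apply the left Noetherian argument just given to $A^{\op}$, which is Noetherian and has the same idempotent $e$, with $(A^{\op})/\langle e\rangle$ still finite dimensional and $eA^{\op}e = B^{\op}$. Either way, $B$ is Noetherian as a two-sided object, completing the proof. I do not anticipate any real obstacle here; the only subtle point is the identity $J = e(AJ)e$, which rests on the elementary observation that a left ideal of $B$ is automatically stable under both left and right multiplication by $e$.
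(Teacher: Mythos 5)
Your proof is correct and takes essentially the same approach as the paper: both establish the key contraction identity recovering a left ideal $J$ of $B$ from $AJ$ (the paper writes $J = e(AJ)$, you write $J = e(AJ)e$, which is the same since $J = Je$), and then appeal to the ascending chain condition in $A$. The paper states the argument more tersely but the underlying mechanism is identical.
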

\begin{proof}
Any left ideal $I$ of $B$ is of the form $e\lift{I}$ for a left ideal $\lift{I}=AI$ of $A$. So any ascending chain of left ideals of $B$ determines and is determined by such a chain of ideals of $A$, which stabilises as $A$ is Noetherian. A similar argument shows that $B$ is right Noetherian.
\end{proof}

\begin{prop}[cf.~\cite{amiotstable}*{Lem.~2.6}]
\label{airextvanish}
For any $X\in\fgmod{\stab{A}}$, we have
\begin{itemize}
\item[(i)]$\Ext^i_A(X,A)=0$ for $i\ne d$, and
\item[(ii)]$\Ext^i_A(X,Ae)=0$ for any $i\in\ZZ$.
\end{itemize}
\end{prop}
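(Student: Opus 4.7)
The plan is to apply the internal Calabi--Yau duality of \Cref{idcy} directly, with $X$ in the role of $M$. First I would verify that $X$ qualifies as a valid $M$: since $A$ is Noetherian and $\gldim A\leq d$, every finitely generated $A$-module is perfect; since $\stab A$ is finite dimensional by hypothesis, any $X\in\fgmod{\stab A}$ is finite dimensional as a $\KK$-vector space, finitely generated as an $A$-module (hence perfect), and satisfies $eX=0$. Thus $X$ is a perfect, finite dimensional $A/\Span{e}$-module, exactly the situation in which the duality of \Cref{idcy} applies.

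For part (i), I would take $N=A$, which is free and hence perfect. The duality gives
\[\Kdual\Ext^i_A(X,A)=\Ext^{d-i}_A(A,X),\]
and the right-hand side vanishes for $d-i\neq 0$ because $A$ is projective. For $i<0$ or $i>d$ both sides vanish for trivial reasons (the latter using $\gldim A\leq d$), so (i) follows immediately.

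For part (ii), I would instead take $N=Ae$. As a summand of $A$ this is projective, hence perfect, so the duality yields
\[\Kdual\Ext^i_A(X,Ae)=\Ext^{d-i}_A(Ae,X).\]
Projectivity of $Ae$ makes this vanish whenever $d-i>0$, and for $d-i=0$ it equals $\Hom_A(Ae,X)=eX=0$ since $X$ is annihilated by $\Span{e}$. The cases $i<0$ and $i>d$ are again trivial, so every $\Ext^i_A(X,Ae)$ vanishes.

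I do not anticipate any genuine obstacle here; the only point requiring care is the perfectness of $X$, which follows cleanly from the Noetherian and finite global dimension hypotheses, after which the duality formula from \Cref{idcy} performs all the work. It is worth remarking that this proposition uses only the internal Calabi--Yau property of $A$ itself; the parallel hypothesis on $A^{\op}$ from \Cref{airanalogue} plays no role here and will instead be invoked when the right-handed versions of these vanishing statements are needed later in the proof of \Cref{airanalogue}, as foreshadowed in \Cref{airanalogueop}.
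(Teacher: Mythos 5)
Your proposal is correct and follows essentially the same route as the paper: verify that $X$ is perfect (using Noetherianity and finite global dimension) and finite dimensional with $eX=0$, then apply the internal Calabi--Yau duality with $N=A$ and $N=Ae$, using projectivity of $A$ and $Ae$ to kill the right-hand $\Ext$ groups, and $eX=0$ for the remaining $\Hom_A(Ae,X)$ case. The only cosmetic difference is that you explicitly note the trivial vanishing for $i<0$ and $i>d$, which the paper leaves implicit.
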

\begin{proof}
Both $A$ and $Ae$ are finitely generated projective $A$-modules, and so are in particular perfect. Since $\stab{A}$, and therefore $X$, is finite dimensional, $X$ is a finitely generated $A$-module, and thus perfect since $A$ is Noetherian of finite global dimension. Now we can use the internal Calabi--Yau duality of $A$ to deduce that
\[\Ext^i_A(X,A)=\Kdual{\Ext^{d-i}_A(A,X)}=0\]
and
\[\Ext^i_A(X,Ae)=\Kdual\Ext^{d-i}_A(Ae,X)=0\]
for $i\ne d$, since $A$ and $Ae$ are projective. Since $X\in\fgmod{\stab{A}}$, we have $eX=0$, and so
\[\Ext^d_A(X,Ae)=\Kdual{\Hom_A(Ae,X)}=\Kdual(eX)=0.\qedhere\]
\end{proof}
The assumption of part (i) of \Cref{airextvanish} is slightly more restrictive than that of \cite{amiotstable}*{Lem.~2.6(a)}. This is necessary for the result to hold in our setting, since our $A$ is only internally $d$-Calabi--Yau. However, this stronger assumption is satisfied whenever \cite{amiotstable}*{Lem.~2.6(a)} is used in the proof of \cite{amiotstable}*{Thm.~2.3}.

The following results (\Cref{extcalcs,pdimbound,ctiltleft,ctiltright}) are now close analogues of \cite{amiotstable}*{Prop.~2.7, Lem.~2.9--2.11}, with very similar proofs. For the convenience of the reader, and to make the paper more self-contained, we give a complete argument using our notation and conventions.

\begin{prop}[cf.~\cite{amiotstable}*{Prop.~2.7}]
\label{extcalcs}
We have isomorphisms
\[\Ext^i_B(eA,B)\iso\begin{cases}Ae,&i=0,\\0,&i\ne0\end{cases}\]
of $A\tens_\KK B^{\op}$-modules, and isomorphisms
\[\Ext^i_B(eA,eA)\iso\begin{cases}A^{\op},&i=0,\\0,&0<i<d-1,\end{cases}\]
of vector spaces, the isomorphism in case $i=0$ being additionally an isomorphism of algebras.
\end{prop}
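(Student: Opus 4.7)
The strategy is to transfer the $\Ext$-groups over $B$ to $\Ext$-groups over $A$, where the internal Calabi--Yau hypothesis---already unpacked as \Cref{airextvanish}---does all the work. Since $Ae$ is a direct summand of $A$ as a left $A$-module, the right adjoint of $Ae \otimes_B - \colon \fgmod{B} \to \fgmod{A}$ is $\Hom_A(Ae,-) \iso e\cdot (-)$, which is exact; the tensor--hom adjunction therefore derives to a natural isomorphism
\[\RHom_A(Ae \ltens_B N, M) \iso \RHom_B(N, eM)\]
for $N \in \dcat{B}$ and $M \in \dcat{A}$. Specialising $N = eA$ and $M = A$ (respectively $M = Ae$) produces $\RHom_B(eA, eA)$ (respectively $\RHom_B(eA, eAe) = \RHom_B(eA, B)$) on the right-hand side, exactly the complexes the proposition concerns.

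The next step is to identify the left-hand side. I claim the multiplication map induces a quasi-isomorphism
\[Ae \ltens_B eA \isoto AeA\]
in $\dcat{\env{A}}$, which combines the assertions that the underived multiplication $Ae \otimes_B eA \to AeA$ is a bimodule isomorphism and that $\operatorname{Tor}_i^B(Ae, eA) = 0$ for $i > 0$. Granting this, apply $\RHom_A(-, M)$ to the triangle $AeA \to A \to \stab{A} \to AeA[1]$ for $M = A$ and $M = Ae$. \Cref{airextvanish}(i) collapses the long exact sequence in the first case to give $\Hom_A(AeA, A) \iso A$ and $\Ext^i_A(AeA, A) = 0$ for $0 < i < d-1$, while \Cref{airextvanish}(ii) collapses it in the second to give $\Hom_A(AeA, Ae) \iso Ae$ and $\Ext^i_A(AeA, Ae) = 0$ for $i > 0$. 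Transporting these back along the adjunction yields the required vanishing together with the stated isomorphisms of $\Ext$-groups.

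The bimodule and algebra structures are picked up by inspecting the natural maps: the $(A, B)$-bimodule isomorphism $Ae \iso \Hom_B(eA, B)$ is $y \mapsto (x \mapsto xy)$, and the algebra isomorphism $A \iso \Endalg{B}{eA}$ comes from right multiplication, with the $\op$ in the statement accounting for the fact that right multiplication $A \to \End_B(eA)$ is an anti-homomorphism of algebras.

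The main obstacle is the quasi-isomorphism $Ae \ltens_B eA \iso AeA$. In the bimodule Calabi--Yau setting of Amiot--Iyama--Reiten, the analogous result can be derived from a projective bimodule resolution of $A$, which we do not have here; its replacement must use the two-sided internal Calabi--Yau hypothesis and the finite dimensionality of $\stab{A}$ more subtly, for example by inductively constructing a projective $B$-module resolution of $eA$ from a projective resolution on the $A$-side and controlling the $\operatorname{Tor}$-obstructions using \Cref{airextvanish}.
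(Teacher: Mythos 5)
Your opening move --- using the adjunction $\RHom_A(Ae\ltens_B N, M) \iso \RHom_B(N, eM)$ to rewrite $\RHom_B(eA,B)$ and $\RHom_B(eA,eA)$ as $\RHom_A(Ae\ltens_B eA, Ae)$ and $\RHom_A(Ae\ltens_B eA, A)$ --- matches the paper exactly. But from there the two arguments diverge, and your version has a real gap.

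You then posit the quasi-isomorphism $Ae\ltens_B eA \isoto AeA$, which packages two non-trivial claims: that the multiplication $Ae\tens_B eA \to AeA$ is injective (it is always surjective, but injectivity can fail for a general idempotent), and that $\operatorname{Tor}^B_i(Ae,eA)=0$ for $i>0$. You flag this yourself as ``the main obstacle'' and do not prove it, and in fact neither claim is established by the internally Calabi--Yau hypotheses --- nor is either needed. The paper's proof sidesteps the issue entirely. Instead of trying to identify $Ae\ltens_B eA$ with $AeA$, it takes the map $f\colon Ae\ltens_B eA \to A$ (target $A$, not $AeA$) given by composing the projection to $H^0$ with multiplication, and forms the cone $X$. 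The decisive observation is that $eA\tens_A f$ is the canonical isomorphism $B\ltens_B eA \isoto eA$, so $eA\tens_A X = 0$; hence every $\cohom{i}{X}$ is annihilated by $e$ and lies in $\fgmod{\stab{A}}$, without any claim that these cohomologies vanish. From there, \Cref{airextvanish} and a hypercohomology spectral sequence give $\RHom_A(X,Ae)=0$ and $\cohom{i}{\RHom_A(X,A)}=0$ for $i<d$, and the long exact sequences of the triangle deliver the stated isomorphisms. In particular the kernel of the multiplication map and the higher Tor groups may well be nonzero --- the paper's argument only needs that they are $\stab{A}$-modules, which is automatic. Your proposed route would need precisely the stronger vanishing you cannot supply, so the proof as written does not go through; to repair it, replace the claimed quasi-isomorphism with the cone argument above.
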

\begin{proof}
We can compute $\Ext^i_B(eA,B)$ as the cohomology of
\[\RHom_B(eA,B)\iso\RHom_B(eA,\RHom_A(Ae,Ae))\iso\RHom_A(Ae\ltens_BeA,Ae),\]
and wish to show that this is isomorphic to the cohomology of $\RHom_A(A,Ae)$. To do this, we show that
\[\RHom_A(Ae\ltens_BeA,Ae)\iso\RHom_A(A,Ae).\]

Let $f$ be the composition of the natural map
\[Ae\ltens_BeA\to\cohom{0}{Ae\ltens_BeA}=Ae\tens_BeA\]
with the multiplication map $Ae\tens_BeA\to A$, and let $X$ be the mapping cone of $f$, so we have a triangle
\begin{equation}
\label{magictriangle}
\begin{tikzcd}[column sep=20pt]
Ae\ltens_BeA\arrow{r}{f}&A\arrow{r}&X\arrow{r}&Ae\ltens_BeA[1]
\end{tikzcd}
\end{equation}
in $\dcat{A}$. The map $eA\tens_Af$ is the natural isomorphism $B\ltens_BeA\isoto eA$, so $eA\tens_AX=0$. It follows that $e\cohom{i}{X}=0$, and hence $\cohom{i}{X}\in\fgmod{\stab{A}}$ for all $i\in\ZZ$. Thus, by \Cref{airextvanish}, $\Ext^j_A(\cohom{i}{X},Ae)=0$ for all $i,j\in\ZZ$.

We can compute $\cohom{k}{\RHom_A(X,Ae)}$ via a hypercohomology spectral sequence ${}^{II}E^{ij}_r$ \cite{weibelintroduction}*{\S5.7.9, see also Defn.~5.6.2}, in which
\[{}^{II}E^{ij}_2=\Ext^j_A(\cohom{i}{X},Ae)=0\]
as above. It follows that $\cohom{k}{\RHom_A(X,Ae)}=0$ for all $k$, and so $\RHom_A(X,Ae)=0$. Now applying $\RHom_A(-,Ae)$ to the triangle (\ref{magictriangle}) yields the required isomorphism
\[\RHom_A(Ae\ltens_BeA,Ae)\iso\RHom_A(A,Ae)\]
in $\dcat{A\tens_\KK B^{\op}}$, from which the first assertion follows by our initial calculations.

Similarly, we have isomorphisms
\[\RHom_B(eA,eA)\iso\RHom_B(eA,\RHom_A(Ae,A))\iso\RHom_A(Ae\ltens_BeA,A),\]
and so to obtain the second assertion we wish to show that
\[\RHom_A(Ae\ltens_BeA,A)\iso\RHom_A(A,A).\]
Both $Ae$ and $eA$ are concentrated in degree $0$, so by triangle (\ref{magictriangle}) we have $\cohom{i}{Ae\ltens_BeA}=0$ for $i>0$, and so $\cohom{i}{X}=0$ for $i>0$. Recalling that $\cohom{i}{X}\in\fgmod{\stab{A}}$, it follows from \Cref{airextvanish} that $\Ext^j_A(\cohom{i}{X},A)=\Hom_{\dcat{A}}(X,A[i])=0$ for $j\ne d$. By an analogous spectral sequence argument to above, $\cohom{i}{\RHom_A(X,A)}=0$ for $i<d$.

From (\ref{magictriangle}), we obtain the long exact sequence
\[\begin{tikzcd}[column sep=13pt]
\cdots\arrow{r}&\Hom_{\dcat{A}}(X,A[i])\arrow{r}&\Hom_{\dcat{A}}(A,A[i])\arrow{r}&\Hom_{\dcat{A}}(Ae\ltens_BeA,A[i])\arrow{r}&\cdots
\end{tikzcd}\]
As $\Hom_{\dcat{A}}(X,A[i])=0$ for $i<d$ as above, it follows from our initial calculations that
\[\Ext^i_B(eA,eA)\iso\Hom_{\dcat{A}}(Ae\ltens_BeA,A[i])\iso\Hom_{\dcat{A}}(A,A[i])\iso\begin{cases}A^{\op},&i=0,\\0,&0<i<d-1,\end{cases}\]
as required.
\end{proof}

\begin{lem}[cf.~\cite{amiotstable}*{Lem.~2.9}]
\label{pdimbound}
For any $X\in\fgmod{B}$, we have
\[\pdim_{A^{\op}}\Hom_B(X,eA)\leq d-2.\]
\end{lem}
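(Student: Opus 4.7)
The plan is to realize $\Hom_B(X, eA)$ as a second syzygy of a right $A$-module in a projective $A^{\op}$-resolution, and then use $\gldim A^{\op} \leq d$ (which follows from the internal $d$-Calabi--Yau hypothesis on $A^{\op}$) to conclude.

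The first step is to pick a finite presentation $B^m \to B^n \to X \to 0$ of $X$ as a $B$-module, which is available because $B$ is Noetherian by \Cref{Bnoetherian}. Applying the contravariant functor $\Hom_B(-, eA)$ and using the natural identification $\Hom_B(B, eA) \cong eA$ as right $A$-modules (via evaluation at $1$), I would obtain an exact sequence
\[0 \to \Hom_B(X, eA) \to (eA)^n \to (eA)^m\]
in $\Mod{A^{\op}}$. The crucial elementary observation is that $eA$ is a direct summand of $A$ as a right $A$-module, via the orthogonal decomposition $A = eA \oplus (1-e)A$, so each $(eA)^k$ is projective in $\Mod{A^{\op}}$.

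Letting $C$ be the cokernel of the map $(eA)^n \to (eA)^m$, the four-term exact sequence
\[0 \to \Hom_B(X, eA) \to (eA)^n \to (eA)^m \to C \to 0\]
then exhibits $\Hom_B(X, eA)$ as the second syzygy of $C$ in a projective $A^{\op}$-resolution. A routine dimension shift, splitting the sequence into two short exact pieces and applying $\Hom_{A^{\op}}(-, M)$, gives $\Ext^i_{A^{\op}}(\Hom_B(X, eA), M) \cong \Ext^{i+2}_{A^{\op}}(C, M)$ for all right $A$-modules $M$ and all $i \geq 1$. The right-hand side vanishes whenever $i + 2 > \gldim A^{\op}$, and since $\gldim A^{\op} \leq d$ by the internal $d$-Calabi--Yau assumption on $A^{\op}$, this forces $\pdim_{A^{\op}} \Hom_B(X, eA) \leq d - 2$.

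I anticipate no serious obstacle. The argument is essentially formal and uses only the finiteness of $\gldim A^{\op}$ together with the fact that $eA$ is a summand of $A_A$; none of the finer internal Calabi--Yau duality of \Cref{idcy} or the bimodule calculations of \Cref{extcalcs} is invoked here. If any step requires care, it is simply unpacking that $\Hom_B(-, eA)$ lands in $\Mod{A^{\op}}$ using the right $A$-action on $eA$ inherited from $A$, so that the whole four-term sequence genuinely lives in $\Mod{A^{\op}}$.
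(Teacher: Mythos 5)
Your proof is correct and follows essentially the same route as the paper: take a finite projective presentation of $X$ over the Noetherian ring $B$, apply $\Hom_B(-,eA)$ to exhibit $\Hom_B(X,eA)$ as a second syzygy over $A^{\op}$, and finish with $\gldim A^{\op}\le d$. The only cosmetic difference is that you specialise the presentation to free modules $B^m\to B^n$ and spell out the dimension shift via the cokernel $C$, whereas the paper works with general finitely generated projectives and simply cites the second-syzygy observation.
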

\begin{proof}
Since $B$ is Noetherian by \Cref{Bnoetherian}, $X$ has a projective presentation $P_1\to P_0\to X\to 0$ with $P_0$ and $P_1$ finitely generated. Applying $\Hom_B(-,eA)$ gives the exact sequence
\[\begin{tikzcd}[column sep=20pt]0\arrow{r}&\Hom_B(X,eA)\arrow{r}&\Hom_B(P_0,eA)\arrow{r}&\Hom_B(P_1,eA)
\end{tikzcd}\]
of $A^{\op}$-modules. Since $\Hom_B(P_i,eA)$ is a projective $A^{\op}$-module, the above sequence shows that $\Hom_B(X,eA)$ is a second syzygy module. Since $\gldim{A^{\op}}\leq d$ by the assumption that $A^{\op}$ is internally $d$-Calabi--Yau, it follows that $\pdim_{A^{\op}}\Hom_B(X,eA)\leq d-2$.
\end{proof}

\begin{rem}
We can obtain the statement that $\gldim{A^{\op}}\leq d$ needed in the proof of \Cref{pdimbound} without assuming that $A^{\op}$ is internally $d$-Calabi--Yau. By Noetherianity of $A$, we have $\gldim{A^{\op}}=\gldim{A}$, and $\gldim{A}\leq d$ since $A$ is internally $d$-Calabi--Yau. However, the next two results, \Cref{ctiltleft,ctiltright}, will use this assumption on $A^{\op}$ in a more fundamental way.
\end{rem}

\begin{lem}[cf.~\cite{amiotstable}*{Lem.~2.10}]
\label{ctiltleft}
If $X\in\GP(B)$ and $\Ext^i_B(X,eA)=0$ for all $0<i<d-1$, then $X\in\add{{}_B(eA)}$.
\end{lem}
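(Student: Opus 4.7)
The plan is to apply the functor $\Hom_B(-, eA) \colon \fgmod B \to \fgmod A^{\op}$ and show that $Y := \Hom_B(X, eA)$ is in fact finitely generated projective as a right $A$-module. Once this is established, $L := \Hom_{A^{\op}}(Y, eA)$ lies in $\add_B(eA)$, and one verifies that the natural biduality map $\eta_X \colon X \to L$ is an isomorphism. The heart of the argument is sharpening the bound $\pdim_{A^{\op}} Y \leq d - 2$ from \Cref{pdimbound} to actual projectivity of $Y$.

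Taking a projective resolution $\cdots \to Q_1 \to Q_0 \to X \to 0$ in $\fgmod B$ (available by \Cref{Bnoetherian}) and applying $\Hom_B(-, eA)$, the hypothesis that $\Ext^i_B(X, eA) = 0$ for $0 < i < d - 1$ produces an exact sequence
\[0 \to Y \to M_0 \to M_1 \to \cdots \to M_{d-2} \to Z \to 0\]
with each $M_i := \Hom_B(Q_i, eA)$ finitely generated projective over $A^{\op}$ and $Z := \im(M_{d-2} \to M_{d-1})$. Dimension shifting along the $d - 1$ projective middle terms gives $\Ext^k_{A^{\op}}(Y, -) \cong \Ext^{k + d - 1}_{A^{\op}}(Z, -)$ for all $k \geq 1$. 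The long exact sequence coming from $0 \to \syz{}^{d-1} X \to Q_{d-2} \to \syz{}^{d-2} X \to 0$, together with the hypothesis $\Ext^{d-2}_B(X, eA) = 0$, identifies $Z$ as sitting in a short exact sequence
\[0 \to Z \to \Hom_B(\syz{}^{d-1} X, eA) \to \Ext^{d-1}_B(X, eA) \to 0.\]
Since $\syz{}^{d-1} X \in \GP(B)$, \Cref{pdimbound} bounds the projective dimension of the middle term by $d - 2$, while $\gldim A^{\op} \leq d$ (from $A^{\op}$ being internally $d$-Calabi--Yau) bounds that of the right-hand term by $d$. Hence $\pdim_{A^{\op}} Z \leq d - 1$, and so $\Ext^{k + d - 1}_{A^{\op}}(Z, -) = 0$ for $k \geq 1$, which gives $Y$ projective.

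With $Y$ projective, the map $\eta_X \colon X \to L \in \add_B(eA)$ is $B$-linear and injective: any $0 \neq x \in X$ is detected by some $\psi \in \Hom_B(X, B) \subset Y$ via the $B$-biduality $X \cong X^{\vee\vee}$ valid for $X \in \GP(B)$, combined with the inclusion $B \hookrightarrow eA$, so $\eta_X(x)(\psi) = \psi(x) \neq 0$. For surjectivity, applying $\Hom_B(-, eA)$ to $0 \to X \to L \to C \to 0$ with $C := \coker \eta_X$, and checking that the resulting map $\Hom_B(L, eA) \to \Hom_B(X, eA)$ becomes the identity on $Y$, one obtains $\Hom_B(C, eA) = 0$ and (using \Cref{extcalcs}) $\Ext^i_B(C, eA) = 0$ for $0 < i < d - 1$. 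A short argument exploiting the Gorenstein-projective structure of $X$ and $L$ then forces $C = 0$, yielding $X \cong L \in \add_B(eA)$.

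The principal obstacle is the dimension-shifting step establishing $Y \in \proj A^{\op}$: the bound $d - 2$ from \Cref{pdimbound} alone is insufficient, and one must carefully identify $Z$ via the syzygy exact sequence and combine two different projective-dimension bounds, the second coming crucially from the internal Calabi--Yau hypothesis on $A^{\op}$ (which is exactly the place where this hypothesis is used in the proof of the theorem). The cokernel-vanishing step is expected to be routine once the key Ext vanishings for $C$ are in hand.
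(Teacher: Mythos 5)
Your first half — establishing that $Y := \Hom_B(X,eA)$ is projective over $A^{\op}$ — reaches the right conclusion but is more elaborate than the paper's: the paper applies $\Hom_B(-,eA)$ to the truncated resolution $0\to\syz{}^{d-2}X\to P_{d-3}\to\cdots\to P_0\to X\to 0$, observes the result stays exact by the hypothesis on $\Ext^i_B(X,eA)$, and applies \Cref{pdimbound} directly to $\Hom_B(\syz{}^{d-2}X,eA)$; there is no need to analyse $\Ext^{d-1}_B(X,eA)$ or the module $Z$ at all. Relatedly, your parenthetical that bounding $\pdim_{A^{\op}}\Ext^{d-1}_B(X,eA)$ via $\gldim A^{\op}\leq d$ is ``exactly the place'' where the internal Calabi--Yau hypothesis on $A^{\op}$ enters is mistaken: as the paper remarks after \Cref{pdimbound}, $\gldim A^{\op}=\gldim A\leq d$ already follows from Noetherianity and the hypothesis on $A$ alone. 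The hypothesis on $A^{\op}$ enters essentially through the isomorphism $\Hom_{B^{\op}}(Ae,B)\iso eA$, i.e.\ \Cref{extcalcs} applied to $A^{\op}$ — which is precisely the step your argument replaces by a direct biduality computation, and that is where your gap lies.

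Having shown $Y$ projective, you set $L=\Hom_{A^{\op}}(Y,eA)\in\add_B(eA)$ and want $\eta_X\colon X\to L$ to be an isomorphism. Injectivity is fine. But surjectivity is not: the asserted ``short argument \ldots\ forces $C=0$'' is not supplied, and the vanishings you produce do not suffice. From $\Hom_B(C,eA)=0$ and $\Ext^i_B(C,eA)=0$ for $0<i<d-1$ you only get $\Ext^i_B(C,B)=0$ in degrees $0\leq i\leq d-2$; over a $d$-Iwanaga--Gorenstein algebra a nonzero finitely generated module can perfectly well have $\RHom_B(C,B)$ concentrated in degrees $d-1,d$, so this does not force $C=0$. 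To close the argument along your route you would need to establish the top-degree vanishings separately, e.g.\ from the long exact sequence of $\Hom_B(-,B)$ applied to $0\to X\to L\to C\to 0$ using $X,L\in\GP(B)$, and then invoke the derived duality $\RHom_B(-,B)$ on $\bdcat{B}$ to conclude $C=0$; or, more cleanly, observe that $\Hom_B(\eta_X,B)$ is an isomorphism and that $\Hom_B(-,B)\colon\GP(B)\to\GP(B^{\op})$ reflects isomorphisms because it has a quasi-inverse. The paper avoids the cokernel entirely: from $Y$ projective it reads off $\Hom_B(X,B)=Ye\in\add_{B^{\op}}(Ae)$, then applies the quasi-inverse duality $\Hom_{B^{\op}}(-,B)$ together with $\Hom_{B^{\op}}(Ae,B)\iso eA$ and the biduality $X\iso\Hom_{B^{\op}}(\Hom_B(X,B),B)$ to land in $\add_B(eA)$.
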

\begin{proof}
Pick an exact sequence
\[\begin{tikzcd}[column sep=20pt]
0\arrow{r}&Y\arrow{r}&P_{d-3}\arrow{r}&\cdots\arrow{r}&P_0\arrow{r}&X\arrow{r}&0
\end{tikzcd}\]
in which each $P_i$ is a finitely generated projective $B$-module. By the assumption on the vanishing of $\Ext^i_B(X,eA)$, we can apply $\Hom_B(-,eA)$ to obtain an exact sequence
\[\begin{tikzcd}[column sep=9pt]
0\arrow{r}&\Hom_B(X,eA)\arrow{r}&\Hom_B(P_0,eA)\arrow{r}&\cdots\arrow{r}&
\Hom_B(P_{d-3},eA)\arrow{r}&\Hom_B(Y,eA)\arrow{r}&0
\end{tikzcd}\]
of $A^{\op}$-modules. Each $\Hom_B(P_i,eA)$ is a projective $A^{\op}$-module, and by \Cref{pdimbound} we have $\pdim_{A^{\op}}\Hom_B(Y,eA)\leq d-2$, so $\Hom_B(X,eA)$ is also a projective $A^{\op}$-module. It follows that $\Hom_B(X,B)=\Hom_B(X,eA)e\in\add_{B^{\op}}(Ae)$. By \cite{amiotstable}*{Prop.~1.3(b)} there are quasi-inverse dualities
\begin{align*}
\Hom_B(-,B)&\colon\GP(B)\to\GP(B^{\op}),\\
\Hom_{B^{\op}}(-,B)&\colon\GP(B^{\op})\to\GP(B).
\end{align*}
Since we are assuming $A^{\op}$ is also internally $d$-Calabi--Yau with respect to $e$, we can apply \Cref{extcalcs} to $A^{\op}$ to obtain an isomorphism $\Hom_{B^{\op}}(Ae,B)\isoto eA$ of $B$-modules. Therefore
\[X\iso\Hom_{B^{\op}}(\Hom_B(X,B),B)\in\add{{}_B(\Hom_{B^{\op}}(Ae,B))}=\add{{}_B(eA)}\]
as required.
\end{proof}

\begin{lem}[cf.~\cite{amiotstable}*{Lem.~2.11}]
\label{ctiltright}
If $X\in\GP(B)$ and $\Ext^i_B(eA,X)=0$ for all $0<i<d-1$, then $X\in\add{{}_B(eA)}$.
\end{lem}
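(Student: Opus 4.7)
My approach is to derive \Cref{ctiltright} from \Cref{ctiltleft} applied to the algebra $A^{\op}$, using the Gorenstein contravariant equivalence $(-)^\vee := \Hom_B(-, B)$ between $\GP(B)$ and $\GP(B^{\op})$. This is licensed by \Cref{extcalcs}, which supplies $eA \in \GP(B)$ (since $\Ext^i_B(eA, B) = 0$ for $i > 0$) together with the identification $(eA)^\vee = Ae$; the analogous statements for $A^{\op}$ give $Ae \in \GP(B^{\op})$ and $(Ae)^\vee = eA$.

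The crucial step is the translation of the vanishing hypothesis across this duality. For $X \in \GP(B)$ we have $X^\vee \in \GP(B^{\op})$, so $\RHom_{B^{\op}}(X^\vee, B) \iso X$ (higher $\Ext$ vanishing), and similarly $\RHom_B(eA, B) \iso Ae$. The derived Hom--tensor--Hom adjunction
\[
\RHom_B(M, \RHom_{B^{\op}}(N, B)) \iso \RHom_{B^{\op}}(N, \RHom_B(M, B))
\]
applied with $M = eA$ and $N = X^\vee$ then gives $\RHom_B(eA, X) \iso \RHom_{B^{\op}}(X^\vee, Ae)$. Taking cohomology in each degree yields the isomorphism $\Ext^i_B(eA, X) \iso \Ext^i_{B^{\op}}(X^\vee, Ae)$, converting the hypothesis into $\Ext^i_{B^{\op}}(X^\vee, Ae) = 0$ for $0 < i < d-1$.

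At this point I can invoke \Cref{ctiltleft} for the algebra $A^{\op}$---which is internally $d$-Calabi--Yau with respect to $e$ by hypothesis, with $Ae$ now playing the role of $eA$ (cf.~\Cref{airanalogueop})---to conclude $X^\vee \in \add_{B^{\op}}(Ae)$. A final dualization then gives $X = X^{\vee\vee} \in \add_B((Ae)^\vee) = \add_B(eA)$, as required. The main delicate point is the Ext-translation above; the argument stays clean because Gorenstein-projectivity of both $eA$ and $X$ forces the implicit higher derived terms to vanish, so no spectral sequence is needed.
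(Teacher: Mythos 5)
Your proof is correct and follows essentially the same route as the paper: both pass through the duality $\Hom_B(-,B)\colon\GP(B)\to\GP(B^{\op})$, use $\Hom_B(eA,B)\iso Ae$ from \Cref{extcalcs} to convert the vanishing hypothesis to one about $\Ext^i_{B^{\op}}(X^\vee,Ae)$, invoke \Cref{ctiltleft} for $A^{\op}$, and dualize back. The only difference is that the paper cites \cite{amiotstable}*{Prop.~1.3(b)} for the fact that the Gorenstein duality preserves extension groups, whereas you supply a self-contained justification via the derived Hom--Hom commutation $\RHom_B(eA,\RHom_{B^{\op}}(X^\vee,B))\iso\RHom_{B^{\op}}(X^\vee,\RHom_B(eA,B))$ together with the observation that Gorenstein-projectivity of $eA$ and $X$ collapses the relevant $\RHom$'s to ordinary $\Hom$'s.
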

\begin{proof}
The quasi-inverse dualities
\begin{align*}
\Hom_B(-,B)&\colon\GP(B)\to\GP(B^{\op}),\\
\Hom_{B^{\op}}(-,B)&\colon\GP(B^{\op})\to\GP(B)
\end{align*}
from \cite{amiotstable}*{Prop.~1.3(b)} preserve extension groups. Since $\Hom_B(eA,B)\iso Ae$ by \Cref{extcalcs}, it follows that $\Ext^i_{B^{\op}}(\Hom_B(X,B),Ae)=0$ for all $0<i<d-1$. Thus by applying \Cref{ctiltleft} to $A^{\op}$ and $\Hom_B(X,B)\in\GP(A^{\op})$, we find that $\Hom_B(X,B)\in\add{{}_{B^{\op}}(Ae)}$. Then, as in \Cref{ctiltleft}, applying $\Hom_{B^{\op}}(-,B)$ gives $X\in\add{{}_B(eA)}$.
\end{proof}

We are now ready to prove \Cref{airanalogue}.

\begin{proof}[Proof of \Cref{airanalogue}]
\begin{itemize}
\item[(i)]We have already shown in \Cref{Bnoetherian} that $B$ is Noetherian, so it remains to show that $B$ has injective dimension at most $d$ on each side. First we show that $\Ext^{d+1}_B(X,B)=0$ for all $X\in\fgmod{B}$. Given such an $X$, let $Y=Ae\tens_BX$, and let $\bP$ be a projective resolution of $Y$. Then $e\bP$ is a bounded complex in the full subcategory $\add(eA)$ of $\fgmod{B}$, quasi-isomorphic to $eY=X$. By \Cref{extcalcs}, $\Ext^i_B(eA,B)=0$ for $i>0$, so another spectral sequence argument (now using ${}^IE_{pq}^r$ from \cite{weibelintroduction}*{Defn.~5.6.1}) shows that
\[\Ext^{d+1}_B(X,B)\iso\cohom{d+1}{\Hom_B(e\bP,B)},\]
where $\Hom_B(e\bP,B)$ denotes the complex obtained by applying $\Hom_B(-,B)$ to $e\bP$.
Since
\begin{align*}
\Hom_B(e\bP,B)&=\Hom_B(eA\tens_A\bP,B)\\
&=\Hom_A(\bP,\Hom_B(eA,B))\iso\Hom_A(\bP,Ae),
\end{align*}
with the final isomorphism coming from \Cref{extcalcs}, it follows that
\[\Ext^{d+1}_B(X,B)\iso\cohom{d+1}{\Hom_A(\bP,Ae)}\iso\Ext^{d+1}_A(Y,Ae)=0\]
since $\gldim{A}\leq d$ by assumption. A dual argument, using that $A^{\op}$ is internally $d$-Calabi--Yau with respect to $e$, shows that $\Ext^{d+1}_{B^{\op}}(X,B)=0$ for all $X\in\fgmod{B^{\op}}$. It follows that $B$ is Iwanaga--Gorenstein of dimension at most $d$, and so $\GP(B)$ is Frobenius \cite{buchweitzmaximal}*{\S4.8}.

\item[(ii)]Since $A$ is Noetherian, the left ideal $\Span{e}=AeA$ is finitely generated. Thus it has a finite generating set contained in $eA$, which must generate $eA\subseteq AeA$ as a $B$-module, so $eA\in\fgmod{B}$. Now $eA\in\GP(B)$ and $\Ext^i_B(eA,eA)=0$ for $0<i<d-1$ by \Cref{extcalcs}. This, together with \Cref{ctiltleft,ctiltright}, shows that $eA$ is $(d-1)$-cluster-tilting in $\GP(B)$.

\item[(iii)]We have $\Endalg{B}{eA}\iso A$ by \Cref{extcalcs}, and thus we have an equivalence
\[\Hom_B(eA,-)\colon\add{{}_B(eA)}\isoto\add{{}_AA}.\]
By \Cref{extcalcs} again, $\Hom_B(eA,B)\iso Ae$. It follows that
\begin{align*}
\Endalg{\stabGP(B)}{eA}&=\Endalg{B}{eA}/\Span{\add{{}_BB}}\\
&\iso\Endalg{A}{A}/\Span{\add{{}_A(Ae)}}\iso A/\Span{e}=\stab{A}
\end{align*}
where $\Span{\cat}$ denotes the ideal of maps factoring through the subcategory $\cat$.
\qedhere
\end{itemize}
\end{proof}

In the setting of \Cref{airanalogue}, we would also like to conclude that $\GP(B)$ is stably $(d-1)$-Calabi--Yau. We now show, using descriptions by Kalck--Yang \cite{kalckrelative} of $\stabGP(B)$ in terms of (complexes of) $A$-modules, that this category is $(d-1)$-Calabi--Yau when we strengthen the assumptions of Theorem~\ref{airanalogue} to require that $A$ is bimodule internally $d$-Calabi--Yau.

\begin{thm}
\label{bimodicytostabcy}
Let $A$ be a Noetherian algebra and let $e\in A$ be an idempotent such that $A/\Span{e}$ is finite dimensional, and $A$ is bimodule internally $d$-Calabi--Yau with respect to $e$. Write $B=eAe$. Then all of the conclusions of \Cref{airanalogue} hold, and moreover $\stabGP(B)$ is $(d-1)$-Calabi--Yau.
\end{thm}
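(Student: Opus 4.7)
The first assertion---that all conclusions of \Cref{airanalogue} hold---follows with no extra work from \Cref{bimodtocat} together with \Cref{lrsym}: the bimodule internal $d$-Calabi--Yau condition implies the (one-sided) internal $d$-Calabi--Yau condition, and the bimodule condition is left-right symmetric, so the same is true of $A^{\op}$. Hence the hypotheses of \Cref{airanalogue} are satisfied, giving (i)--(iii).

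The substantive content is the $(d-1)$-Calabi--Yau property of $\stabGP(B)$. My plan is to follow the suggestion in the text and invoke Kalck--Yang's description of $\stabGP(B)$ in terms of complexes of $A$-modules, which under our setup should realise $\stabGP(B)$ as (or embed it into) a Verdier quotient of $\per{A}$ by the thick subcategory of complexes whose cohomology lies in $\Mod{\stab{A}}$. Under this identification, stable morphism spaces in $\stabGP(B)$ are computed as morphism spaces in $\dcat{A}$, with a shift by $1$ arising from the role of $eA$: its image under the Kalck--Yang functor is the free module $A$, and the suspension in $\stabGP(B)$, namely $\syz{}^{-1}$, corresponds to the $[1]$-shift after accounting for the cluster-tilting embedding, producing a total shift discrepancy of $d$ versus $d-1$.

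Once this dictionary is in place, the required duality
\[\Kdual\stabHom_B(X,Y) = \stabHom_B(Y,\syz{}^{-(d-1)}X)\]
reduces to an instance of \Cref{bimodcydual}, namely
\[\Kdual\Hom_{\dcat{A}}(M,N) = \Hom_{\dcat{A}}(N[-d],M)\]
for $M\in\resdcatfd{\stab{A}}{A}$ and $N\in\dcat{A}$. The key finiteness input is the hypothesis that $\stab{A}=A/\Span{e}$ is finite dimensional: any cone representing a morphism in the Kalck--Yang quotient lies in $\resdcat{\stab{A}}{A}$ by construction, and finite-dimensionality of total cohomology for the relevant comparison complex then places it in $\resdcatfd{\stab{A}}{A}$ so that \Cref{bimodcydual} applies. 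The bimodule hypothesis of this theorem (as opposed to merely the one-sided internal condition of \Cref{airanalogue}) is essential here, because it is what makes the duality apply to arbitrary $N\in\dcat{A}$, not just perfect ones, which in turn is what one needs when $N$ is the Kalck--Yang lift of an arbitrary Gorenstein projective module.

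The principal obstacle will be the careful bookkeeping in the Kalck--Yang reduction: identifying exactly which objects of $\per{A}$ (or of an appropriate quotient) represent which objects of $\stabGP(B)$, checking that the morphism spaces match under this correspondence, and confirming the shift accounting so that the $d$-Calabi--Yau symmetry on $\dcat{A}$ transports to the expected $(d-1)$-Calabi--Yau symmetry on $\stabGP(B)$. A secondary technical point is verifying that the triangle of \Cref{bimodidcy}(iii) survives the relevant derived tensor operations well enough for the argument used in the proof of \Cref{bimodcydual} to be applied in the present quotient setting; this should follow from the vanishing property of the cone $C$ against $\resdcatfd{\stab{A}}{A}$, which is preserved under $-\ltens_A X$ for $X$ coming from $\per{A}$.
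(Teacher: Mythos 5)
Your first paragraph is correct and matches the paper exactly: combining \Cref{bimodtocat} and \Cref{lrsym} shows both $A$ and $A^{\op}$ are internally $d$-Calabi--Yau with respect to $e$, so \Cref{airanalogue} applies.

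For the $(d-1)$-Calabi--Yau property of $\stabGP(B)$, your high-level strategy (invoke Kalck--Yang to relate $\stabGP(B)$ to derived categories over $A$, then feed the resulting Hom-spaces into \Cref{bimodcydual}) is the right one and matches the paper's direction, but there are two genuine gaps in the proposal as written. First, your description of the Verdier quotient is not correct: $\stabGP(B)$ is not a quotient of $\per{A}$ by the thick subcategory of complexes with cohomology in $\Mod{\stab{A}}$ alone. The paper's chain of equivalences (coming from \cite{kalckrelative}*{Props.~2.12, 3.3} and Buchweitz) is
\[
\frac{(\bdcat{A}/\thick{Ae})^\omega}{q(\thick(\fgmod{\stab{A}}))}\isoto\per{C}/\dcatfd{C}\qquad\text{and}\qquad\frac{\bdcat{A}/\thick{Ae}}{q(\thick(\fgmod{\stab{A}}))}\isoto\bdcat{B}/\per{B}\isoto\stabGP(B),
\]
so one must first kill $\thick{Ae}$ and \emph{then} the image of $\thick(\fgmod{\stab{A}})$; omitting $\thick{Ae}$ produces the wrong category. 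Second, and more importantly, you defer the passage from the $d$-Calabi--Yau duality of \Cref{bimodcydual} to the $(d-1)$-Calabi--Yau property of the quotient to ``bookkeeping'' and flag it as the ``principal obstacle'' without resolving it. This is precisely the non-trivial step, and the paper does not treat it as bookkeeping: it introduces the auxiliary non-positive dg-algebra $C$ with $\cohom{0}{C}=\stab{A}$ from the Kalck--Yang recollement, verifies using \Cref{bimodcydual} that $(\per{C},\dcatfd{C},\add{C})$ is a $d$-Calabi--Yau triple in the sense of Iyama--Yang, and then cites \cite{iyamasilting}*{Thm.~5.8(a)} to conclude $\per{C}/\dcatfd{C}$ is $(d-1)$-Calabi--Yau, before exhibiting $\stabGP(B)$ as a full triangulated subcategory of that quotient. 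Without this Calabi--Yau reduction result (or a reproof of it) the degree shift does not simply fall out, since morphism spaces in a Verdier quotient are computed by a calculus of fractions and the existence of the right Serre functor on the quotient requires an actual argument. So the proposal identifies the correct ingredients but leaves the central step of the proof open.
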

\begin{proof}
By \Cref{bimodtocat}, $A$ and $A^{\op}$ are internally $d$-Calabi--Yau with respect to $e$, so our assumptions imply those of \Cref{airanalogue}. It remains to check that $\stabGP(B)$ is $(d-1)$-Calabi--Yau.

By \cite{kalckrelative}*{Prop.~2.10} (see also \cite{dwyernoncommutative}*{\S2--3}), there exists a dg-algebra $C$ and a dg-algebra homomorphism $A\to C$, where $A$ is considered as a dg-algebra concentrated in degree $0$, such that $C$ fits into a recollement
\[\begin{tikzcd}[column sep=20pt]
\dcat{C}\arrow{r}&\dcat{A}\arrow[bend right]{l}\arrow[bend left]{l}\arrow{r}&\dcat{B}.\arrow[bend right]{l}\arrow[bend left]{l}
\end{tikzcd}\]
Moreover, $C^i=0$ for $i>0$ and $\cohom{0}{C}=A/\Span{e}=\stab{A}$. Since $\stab{A}$ is finite dimensional and $\gldim{A}\leq d$, it follows from \cite{kalckrelative}*{Cor.~2.13} that $\dim\cohom{i}{C}<\infty$ for all $i$, and so $\per{C}$ is $\Hom$-finite. The proof of this corollary also shows that $\dcatfd{C}\subseteq\per{C}$.

By \cite{kalckrelative}*{Proof of Cor.~2.12}, the functor $i_*=\RHom_C(C,-)\colon\dcat{C}\to\dcat{A}$ induces a triangle equivalence $\dcat{C}\isoto\resdcat{\stab{A}}{A}$, which restricts to a triangle equivalence $\dcatfd{C}\isoto\resdcatfd{\stab{A}}{A}$. Thus for any $M\in\dcatfd{C}$ and $N\in\per{C}$, we have functorial isomorphisms
\[\Kdual\Hom_{\dcat{C}}(M,N)=\Kdual\Hom_{\dcat{A}}(i_*M,i_*N)=\Hom_{\dcat{A}}(i_*N,i_*M[d])=\Hom_{\dcat{C}}(N,M[d]),\]
the second coming from \Cref{bimodcydual}, using the assumption that $A$ is bimodule internally $d$-Calabi--Yau with respect to $e$. Thus $(\per{C},\dcatfd{C},\add{C})$ is a $d$-Calabi--Yau triple in the sense of Iyama--Yang \cite{iyamasilting}*{\S5.1}, and so it follows from \cite{iyamasilting}*{Thm.~5.8(a)} (see also \cite{amiotcluster}*{\S1}, \cite{guocluster}*{\S2}) that $\per{C}/\dcatfd{C}$ is $(d-1)$-Calabi--Yau.

We complete the proof by showing that $\stabGP(B)$ is equivalent to a full triangulated subcategory of $\per{C}/\dcatfd{C}$, and so is also $(d-1)$-Calabi--Yau. Since $\gldim{A}\leq d$, \cite{kalckrelative}*{Cor.~2.12a} tells us that $i^*=C\ltens_A-$ provides a triangle equivalence between the idempotent completion of $\bdcat{A}/\thick{Ae}$, denoted by $(\bdcat{A}/\thick{Ae})^\omega$, and $\per{C}$. Moreover, \cite{kalckrelative}*{Proof of Cor.~2.12} shows that $i_*=\RHom_C(C,-)$ induces a triangle equivalence $\dcatfd{C}\isoto\resdcatfd{\stab{A}}{A}$, and that the codomain of this equivalence coincides with $\thick(\fgmod{\stab{A}})$. Since $i^*i_*\simeq\id{\dcat{C}}$, we see that $i^*$ restricts to an equivalence $\thick(\fgmod{\stab{A}})\isoto\dcatfd{C}$, and so induces an equivalence
\[\frac{(\bdcat{A}/\thick{Ae})^\omega}{q(\thick(\fgmod{\stab{A}}))}\isoto\frac{\per{C}}{\dcatfd{C}},\]
where $q$ denotes the projection $\bdcat{A}\to\bdcat{A}/\thick{Ae}$, which restricts to an equivalence on $\thick(\fgmod{\stab{A}})$ by the above observations. We also have equivalences
\[\frac{\bdcat{A}/\thick{Ae}}{q(\thick(\fgmod{\stab{A}}))}\isoto\bdcat{B}/\per{B}\isoto\stabGP(B),\]
the first from \cite{kalckrelative}*{Prop.~3.3} and the second from a result of Buchweitz \cite{buchweitzmaximal}*{Thm.~4.4.1}. Since $\bdcat{A}/\thick{Ae}$ is a full triangulated subcategory of its idempotent completion, combining the above equivalences gives a triangle equivalence of $\stabGP(B)$ with a full triangulated subcategory of the $(d-1)$-Calabi--Yau triangulated category $\per{C}/\dcatfd{C}$, from which it follows that $\stabGP(B)$ is itself $(d-1)$-Calabi--Yau.
\end{proof}

It was necessary in the proof of \Cref{bimodicytostabcy} to use the bimodule internal Calabi--Yau symmetry of $A$ to obtain a duality between spaces of maps of complexes of $A$-modules, so it is unclear whether the conclusion that $\stabGP(B)$ is $(d-1)$-Calabi--Yau might hold only under the weaker assumptions of Theorem~\ref{airanalogue}. As already stated, we do not currently have any examples of internally Calabi--Yau algebras that are not bimodule internally Calabi--Yau, but it seems unlikely that the two classes coincide.

Under the assumptions of \Cref{bimodicytostabcy}, we would like to be able to conclude that the Frobenius category $\GP(B)$ is in fact a Frobenius $(d-1)$-cluster category in the sense of Definition~\ref{frobclustcat}. It remains to check that $\gldim{\Endalg{B}{T}}\leq d$ for any $(d-1)$-cluster-tilting object $T\in\GP(B)$; a priori, we only know this for the $(d-1)$-cluster-tilting object $eA$. Whenever $\Endalg{B}{T}$ is Noetherian, we can apply \Cref{gldimbound} to get the desired conclusion. While this Noetherianity is automatic in some situations, such as if $B$ is finite dimensional over $\KK$, in general it appears to be a more subtle issue.

\begin{rem}
\label{assumptionsrem}
As indicated in the introduction, \Cref{airanalogue,bimodicytostabcy} are motivated by the problem of constructing Frobenius categories modelling cluster algebras. Given the seed of a cluster algebra with frozen variables, we can look for an algebra $A$ with the same quiver (up to the addition of arrows between frozen vertices), satisfying the conditions of \Cref{bimodicytostabcy} for $d=3$, and then apply this theorem to obtain the Frobenius category $\GP(B)$. Since constructing such an $A$ can be very difficult, we wish to comment on the degree to which the conditions we are imposing are necessary.

Firstly, we consider it likely that the condition that $A$ is Noetherian can be dropped, up to finding an appropriate replacement for the category $\GP(B)$. While $B$ may not be Noetherian if $A$ fails to be, it will still have injective dimension at most $d$ on each side, so there should be a `good' theory of Gorenstein projective modules over $B$. For our methods to work, we would need the analogues of \cite{amiotstable}*{Prop.~1.3} to hold in this setting. We would also hope for a more general version of the Iyama--Kalck--Wemyss--Yang equivalence stated here as \Cref{kiwythm}, without the Noetherianity assumption, which would then apply to arbitrary Frobenius $m$-cluster categories, and Buchweitz's description of the stable category \cite{buchweitzmaximal}*{Thm.~4.4.1}.

The other conditions are more essential; if $A=\Endalg{\frobcat}{T}$ for $T$ a cluster-tilting object in a Frobenius cluster category $\frobcat$, and $e$ is the idempotent given by projecting onto a maximal projective summand of $T$, then $A/\Span{e}$ must be finite dimensional since $\stab{\frobcat}$ is Hom-finite, and $A$ is internally $3$-Calabi--Yau with respect to $e$ by \Cref{ctiltidcy}. On the other hand, it may not be necessary for $A$ to be bimodule internally $3$-Calabi--Yau.
\end{rem}

\section{A bimodule complex for frozen Jacobian algebras}
\label{bimodcomplex}

Given a Frobenius cluster category $\frobcat$ and a cluster-tilting object $T\in\frobcat$, it is often the case that $A=\Endalg{\frobcat}{T}$ takes the form of a frozen Jacobian algebra (see \Cref{frjacalg} below). Indeed, this is the case for at least some cluster-tilting objects in the families of Frobenius cluster categories we described in \Cref{birscats,jkscats}; see \cite{buanmutation}*{Thm.~6.6}, \cite{baurdimer}*{Thm.~10.3}. Thus these algebras, which also come with a preferred `frozen' idempotent, are ideal candidates for constructing stably $2$-Calabi--Yau Frobenius categories via the methods of \Cref{airanalogue} and \Cref{bimodicytostabcy}. Moreover, $3$-Calabi--Yau properties of ordinary Jacobian algebras have been widely studied, for example by Bocklandt \cite{bocklandtgraded} and, in the context of dimer models on closed surfaces, by Broomhead \cite{broomheaddimer}.

With this in mind, the main result of this section, \Cref{frjaci3cy}, shows that a frozen Jacobian algebra admitting a particular bimodule resolution (analogous to one defined by Ginzburg \cite{ginzburgcalabiyau}*{5.1.5}) is bimodule internally $3$-Calabi--Yau with respect to its frozen idempotent.

\begin{defn}[cf.~\cite{buanmutation}*{Defn.~1.1}, \cite{demoneticequivers1}*{\S2.1}, \cite{francobipartite}*{\S6.1}]
\label{frjacalg}
An \emph{ice quiver} $(Q,F)$ consists of a finite quiver $Q$ without loops and a (not necessarily full) subquiver $F$ of $Q$. Denote by $\comp{\KK Q}$ the completion of the path algebra of $Q$ over $\KK$ with respect to the arrow ideal. A \emph{potential} on $Q$ is a linear combination $W$ of cycles of $Q$. A vertex or arrow of $Q$ is called \emph{frozen} if it is a vertex or arrow of $F$, and \emph{mutable} or \emph{unfrozen} otherwise. For brevity, we write $Q_0^\mut=Q_0\setminus F_0$ and $Q_1^\mut=Q_1\setminus F_1$ for the sets of mutable vertices and unfrozen arrows respectively. For $\alpha\in Q_1$ and $\alpha_n\dotsm\alpha_1$ a cycle in $Q$, write
\[\der{\alpha}{\alpha_n\dotsm\alpha_1}=\sum_{\alpha_i=\alpha}\alpha_{i-1}\dotsm\alpha_1\alpha_n\dotsm\alpha_{i+1}\]
and extend linearly. The ideal $\Span{\der{\alpha}{W}:\alpha\in Q_1^\mut}$ of $\comp{\KK Q}$ is called the \emph{Jacobian ideal}, and we may take its closure $\close{\Span{\der{\alpha}{W}:\alpha\in Q_1^\mut}}$ since $\comp{\KK Q}$ is a topological algebra. We define the \emph{frozen Jacobian algebra} associated to $(Q,F,W)$ by
\[\frjac{Q}{F}{W}=\comp{\KK Q}/\close{\Span{\der{\alpha}{W}:\alpha\in Q_1^\mut}}.\]
Write $A=\frjac{Q}{F}{W}$. The above presentation of $A$ suggests a preferred idempotent $e=\sum_{v\in F_0}\idemp{v}$, which we call the \emph{frozen idempotent}. We will call $B=eAe$ the \emph{boundary algebra} of $A$.
\end{defn}

\begin{rem}
If $F=\emptyset$, then $\frjac{Q}{\emptyset}{W}=:\jac{Q}{W}$ is the usual Jacobian algebra.
\end{rem}

\begin{eg}
\label{frjacalgeg}
Consider the ice quiver with potential $(Q,F,W)$, where
\[Q=\mathord{\begin{tikzpicture}[baseline=0]
\node at (0,0.5) (1) {$\boxed{1}$};
\node at (2,0.5) (2) {$\boxed{2}$};
\node at (1,-0.5) (3) {$3$};
\path[-angle 90,font=\scriptsize]
	(2) edge node[below right] {$\alpha_2$} (3)
	(3) edge node[below left] {$\alpha_3$} (1);
\path[\frozen,-angle 90,font=\scriptsize]
	(1) edge node[above] {$\alpha_1$} (2);
\end{tikzpicture}}\]
the frozen subquiver $F$ is the full subquiver on vertices $1$ and $2$, indicated by boxed vertices and a dashed arrow, and $W=\alpha_3\alpha_2\alpha_1$. Then the frozen Jacobian algebra $\frjac{Q}{F}{W}$ is the quotient of $\comp{\KK Q}$ by the relations $\der{\alpha_2}W=\alpha_1\alpha_3$ and $\der{\alpha_3}W=\alpha_2\alpha_1$, so $\frjac{Q}{F}{W}$ is the algebra $A$ from \Cref{i3cyeg}. (In this case $A$ is finite dimensional, so it agrees with the quotient of the ordinary path algebra $\KK Q$ by the same relations.) It will follow from \Cref{frjaci3cy} below that $\frjac{Q}{F}{W}$ is bimodule internally $3$-Calabi--Yau with respect to the idempotent $e_1+e_2$ given by summing the idempotents corresponding to frozen vertices. The usual Jacobian algebra $\jac{Q}{W}$ has the additional relation $\alpha_3\alpha_2=0$ and is not bimodule $3$-Calabi--Yau; indeed, it has infinite global dimension.
\end{eg}

Given a quiver with potential $(Q,W)$, Ginzburg \cite{ginzburgcalabiyau}*{5.1.5} (see also \cite{broomheaddimer}*{\S7}) defines a complex of projective bimodules over the associated Jacobian algebra. For $(Q,W)$ a quiver with potential determined by a dimer model on a torus, Broomhead shows in \cite{broomheaddimer}*{Thm.~7.7} that if the dimer model is consistent, then this complex is isomorphic to $A=\jac{Q}{W}$ in $\bdcat{\env{A}}$, and thus provides a projective bimodule resolution of $A$. It follows in this case that $A$ is $3$-Calabi--Yau, with this property arising from a natural symmetry in the bimodule resolution.

We will now define an analogous complex $\res{A}$ for a frozen Jacobian algebra $A=\frjac{Q}{F}{W}$. Our main result (\Cref{frjaci3cy}) will be that if $\res{A}$ is isomorphic to $A$ in $\bdcat{\env{A}}$, then $A$ is bimodule internally $3$-Calabi--Yau with respect to the frozen idempotent $e=\sum_{v\in F_0}\idemp{v}$, in the sense of \Cref{bimodidcy}. While we will write $\res{A}$ for this complex in order to save space, the definition depends not only on $A$ but on the ice quiver with potential $(Q,F,W)$ giving the presentation of $A$ as $\frjac{Q}{F}{W}$.

Recall that we write $Q_0^\mut=Q_0\setminus F_0$ for the set of mutable vertices and $Q_1^\mut=Q_1\setminus F_1$ for the set of unfrozen arrows. We also write $\vout{v}$ for the set of arrows with tail at $v$, and $\vin{v}$ for the set of arrows with head at $v$. Denote the arrow ideal of $A$ by $\rad{A}$, and let $S=A/\rad{A}$. For the remainder of this section, we write $\tens=\tens_S$.

Introduce formal symbols $\rho_\alpha$ for each $\alpha\in Q_1$ and $\omega_v$ for each $v\in Q_0$, and define $S$-bimodule structures on the vector spaces
\begin{align*}
\KK Q_0&=\bigdsum_{v\in Q_0}\KK\idemp{v},&\KK Q_0^\mut&=\bigdsum_{v\in Q_0^\mut}\KK\idemp{v},&\KK F_0&=\bigdsum_{v\in F_0}\KK\idemp{v},\\
\KK Q_1&=\bigdsum_{\alpha\in Q_1}\KK\alpha,&\KK Q_1^\mut&=\bigdsum_{\alpha\in Q_1^\mut}\KK\alpha,&\KK F_1&=\bigdsum_{\alpha\in F_1}\KK\alpha,\\
\KK Q_2&=\bigdsum_{\alpha\in Q_1}\KK\rel{\alpha},&\KK Q_2^\mut&=\bigdsum_{\alpha\in Q_1^\mut}\KK\rel{\alpha},&\KK F_2&=\bigdsum_{\alpha\in F_1}\KK\rel{\alpha},\\
\KK Q_3&=\bigdsum_{v\in Q_0}\KK\omega_v,&\KK Q_3^\mut&=\bigdsum_{v\in Q_0^\mut}\KK\omega_v,&\KK F_3&=\bigdsum_{v\in F_0}\KK\omega_v,
\end{align*}
via the formulae
\begin{align*}
\idemp{v}\cdot\idemp{v}\cdot\idemp{v}&=\idemp{v},\\
\idemp{\head{\alpha}}\cdot\alpha\cdot\idemp{\tail{\alpha}}&=\alpha,\\
\idemp{\tail{\alpha}}\cdot\rel{\alpha}\cdot\idemp{\head{\alpha}}&=\rel{\alpha},\\
\idemp{v}\cdot\omega_v\cdot\idemp{v}&=\omega_v,
\end{align*}
where $\head{\alpha}$ and $\tail{\alpha}$ denote the head and tail of the arrow $\alpha$. For each $i$, the $S$-bimodule $\KK Q_i$ splits as the direct sum
\[\KK Q_i=\KK Q_i^\mut\dsum\KK F_i.\]
Since $\KK Q_0\cong S$, the $A$-bimodule $A\tens \KK Q_0\tens A$ is canonically isomorphic to $A\tens A$, and we will use the two descriptions interchangeably.

We define maps $\bar{\mu}_i\colon A\tens \KK Q_i\tens A\to A\tens \KK Q_{i-1}\tens A$ for $1\leq i\leq 3$. The map $\bar{\mu}_1$ is defined by
\[\bar{\mu}_1(x\tens\alpha\tens y)=x\tens\idemp{\head{\alpha}}\tens\alpha y-x\alpha\tens\idemp{\tail{\alpha}}\tens y,\]
or, composing with the natural isomorphism $A\tens\KK Q_0\tens A\isoto A\tens A$, by
\[\bar{\mu}_1(x\tens\alpha\tens y)=x\tens\alpha y-x\alpha\tens y.\]
For any path $p=\alpha_m\dotsm\alpha_1$ of $Q$, we may define
\[\Delta_\alpha(p)=\sum_{\alpha_i=\alpha}\alpha_m\dotsm\alpha_{i+1}\tens\alpha_i\tens\alpha_{i-1}\dotsm\alpha_1,\]
and extend by linearity to obtain a map $\Delta_\alpha\colon\comp{\KK Q}\to A\tens \KK Q_1\tens A$. We then define
\[\bar{\mu}_2(x\tens\rel{\alpha}\tens y)=\sum_{\beta\in Q_1}x\Delta_\beta(\der{\alpha}{W})y.\]
Finally, let
\[\bar{\mu}_3(x\tens\omega_v\tens y)=\sum_{\alpha\in\vout{v}}x\tens\rel{\alpha}\tens\alpha y-\sum_{\beta\in\vin{v}}x\beta\tens\rel{\beta}\tens y.\]

\begin{defn}
\label{frjacres}
For $A=\frjac{Q}{F}{W}$, let $\res{A}$ be the sequence
\[\begin{tikzcd}[column sep=20pt]
A\tens \KK Q_3^\mut\tens A\arrow{r}{\mu_3}&A\tens \KK Q_2^\mut\tens A\arrow{r}{\mu_2}&A\tens \KK Q_1\tens A\arrow{r}{\mu_1}&A\tens\KK Q_0\tens A
\end{tikzcd}\]
of $A$-bimodules, where $\mu_1=\bar{\mu}_1$, and the maps $\mu_2$ and $\mu_3$ are obtained by restricting $\bar{\mu}_2$ and $\bar{\mu}_3$ to $A\tens \KK Q_2^\mut\tens A$ and $A\tens \KK Q_3^\mut\tens A$ respectively. As $\vout{v}\union\vin{v}\subseteq Q_1^\mut$ for any $v\in Q_0^\mut$, the map $\mu_3$ takes values in $A\tens \KK Q_2^\mut\tens A$ as claimed.
\end{defn}

If $F=\emptyset$, then $\res{A}$ is the complex associated to $(Q,W)$ by Ginzburg \cite{ginzburgcalabiyau}*{5.1.5} and Broomhead \cite{broomheaddimer}*{\S7}. In the general case, $\res{A}$ has already appeared in work of Amiot--Reiten--Todorov \cite{amiotubiquity}*{Proof of Prop.~2.2}.

\begin{lem}
\label{frjacresiscomp}
For a frozen Jacobian algebra $A=\frjac{Q}{F}{W}$, the sequence $\res{A}$ in \Cref{frjacres} is a complex of finitely generated projective $A$-bimodules, and there is a morphism
\[\begin{tikzcd}[column sep=20pt]
A\tens \KK Q_3^\mut\tens A\arrow{r}{\mu_3}\arrow{d}&A\tens \KK Q_2^\mut\tens A\arrow{r}{\mu_2}\arrow{d}&A\tens \KK Q_1\tens A\arrow{r}{\mu_1}\arrow{d}&A\tens A\arrow{d}{\mu_0}\\
0\arrow{r}&0\arrow{r}&0\arrow{r}&A
\end{tikzcd}\]
from $\res{A}$ to $A$, where $\mu_0\colon A\tens A\to A$ is the multiplication in $A$. Moreover, the complex $0\to\res{A}\to A\to0$ is exact at $A$, $A\tens A$ and $A\tens\KK Q_1\tens A$.
\end{lem}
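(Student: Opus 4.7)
My plan is to verify the claims in three stages: first establish finite generation, projectivity, and the complex identities $\mu_i\mu_{i+1}=0$; then the two easier exactness statements at $A$ and $A\tens A$; and finally the more delicate exactness at $A\tens\KK Q_1\tens A$, which I expect to be the main obstacle.

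Each $S$-bimodule $\KK Q_i$ (or $\KK Q_i^\mut$) decomposes as a finite direct sum of one-dimensional pieces $\KK\cdot\idemp{v}\tens\idemp{w}$, so $A\tens\KK Q_i\tens A$ is a finite direct sum of indecomposable projective $A$-bimodules $A\idemp{v}\tens_\KK\idemp{w}A$. The $\mu_i$ are defined on these generators and extend canonically to $A$-bimodule homomorphisms. For the vanishing identities, $\mu_0\mu_1=0$ is immediate from the definition of $\mu_1$. For $\mu_1\mu_2=0$ I would use the telescoping identity $\mu_1(\sum_\beta\Delta_\beta(p))=1\tens p-p\tens 1$ valid for any path $p$, applied to $p=\der{\alpha}W$ which vanishes in $A$ for $\alpha\in Q_1^\mut$. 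For $\mu_2\mu_3=0$ the computation proceeds cycle-by-cycle: for a cycle $c=\gamma_n\cdots\gamma_1$ of $W$ and a mutable vertex $v$, the bulk contributions to $\sum_{\alpha\in\vout{v}}\Delta(\der{\alpha}c)\cdot\alpha$ and $\sum_{\beta\in\vin{v}}\beta\cdot\Delta(\der{\beta}c)$ cancel after the positional shift $i=k+1$ (using $\tail{\gamma_i}=\head{\gamma_{i-1}}=v$), while the boundary discrepancies aggregate, over all cycles of $W$, into $\sum_{\alpha:\head{\alpha}=v}1\tens\alpha\tens\der{\alpha}W$ and $\sum_{\beta:\tail{\beta}=v}\der{\beta}W\tens\beta\tens 1$; these vanish in $A\tens\KK Q_1\tens A$ since every arrow incident to a mutable vertex is itself mutable.

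Exactness at $A$ is clear from $\mu_0(1\tens a)=a$. For exactness at $A\tens A$, the telescoping identity $x\tens y-xy\tens 1=\mu_1(x\cdot\sum_\beta\Delta_\beta(y))$, verified directly for paths $y=\alpha_m\cdots\alpha_1$ and $x\in A$, extends by linearity and topological continuity to arbitrary $y\in A$. Hence any $\sum_jx_j\tens y_j\in\ker\mu_0$ equals $\sum_j(x_j\tens y_j-x_jy_j\tens 1)$, since $\sum_jx_jy_j=0$, which lies in $\im\mu_1$.

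The hard part is exactness at $A\tens\KK Q_1\tens A$. Here I would lift $\xi\in\ker\mu_1$ to some $\tilde\xi\in\comp{\KK Q}\tens\KK Q_1\tens\comp{\KK Q}$ and exploit the hereditary nature of the complete path algebra: the analogous map $D:=\mu_1^{\comp{\KK Q}}$ is injective, with image the kernel of multiplication on $\comp{\KK Q}$. Then $D(\tilde\xi)$ lies in $\close{J}\tens\comp{\KK Q}+\comp{\KK Q}\tens\close{J}$, where $\close{J}$ denotes the Jacobian ideal. The key identity $D(a\cdot\Delta(r)\cdot b)=a\tens rb-ar\tens b$, combined with the Leibniz rule $\Delta(pq)=\Delta(p)\cdot q+p\cdot\Delta(q)$ and the topological generation of $\close{J}$ by $\{\der{\alpha}W:\alpha\in Q_1^\mut\}$, allows one to write $D(\tilde\xi)=D(v)$ for some $v$ in the sum $\im\mu_2^{\comp{\KK Q}}+\close{J}\tens\KK Q_1\tens\comp{\KK Q}+\comp{\KK Q}\tens\KK Q_1\tens\close{J}$. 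Injectivity of $D$ then forces $\tilde\xi=v$, and projecting to $A\tens\KK Q_1\tens A$ annihilates the $\close{J}$-containing summands of $v$, leaving $\xi\in\im\mu_2^A$. Careful handling of convergence in the completion is the main technical subtlety here.
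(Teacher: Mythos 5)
Your proposal is correct in outline, and the structure of what you are proving matches what the paper claims. The main difference is methodological: where you give direct, hands-on arguments for the exactness statements, the paper simply invokes Butler--King's result on bimodule presentations of algebras (their reference {[}BK99, 1.2{]}) to conclude at once that $\mu_0$ is surjective, $\im\mu_1=\ker\mu_0$, and $\im\mu_2=\ker\mu_1$, so that the only non-routine thing to check is $\mu_2\circ\mu_3=0$. Your cycle-by-cycle argument for $\mu_2\mu_3=0$ lands in exactly the same place as the paper's, which applies a Leibniz rule to the two presentations
\[
W_v=\sum_{\alpha\in\vout v}(\der\alpha W)\alpha=\sum_{\beta\in\vin v}\beta(\der\beta W)
\]
of the vertex potential $W_v$; your ``bulk cancellation'' and ``boundary discrepancies'' are precisely the two sides of that identity, and you correctly observe that the leftover boundary terms $\sum\der\alpha W\tens\alpha\tens1$ and $\sum1\tens\beta\tens\der\beta W$ die in $A\tens\KK Q_1\tens A$ because every arrow incident to a mutable vertex is mutable. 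Your $\mu_1\mu_2=0$ via telescoping is also correct (and is of course implied once one has $\im\mu_2=\ker\mu_1$).

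The real asymmetry is the exactness at $A\tens\KK Q_1\tens A$. Your strategy --- lift to the hereditary $\comp{\KK Q}$, use that the bimodule map $D$ there is injective with image $\ker\mu_0$, and then massage $D(\tilde\xi)$ into the image of the lifted $\mu_2$ modulo $\ker\pi=\close{J}\tens\KK Q_1\tens\comp{\KK Q}+\comp{\KK Q}\tens\KK Q_1\tens\close{J}$ --- is sound and does work; the key algebraic fact you need (and have implicitly) is that $D^{-1}(\comp{\KK Q}\tens\close J)=\comp{\KK Q}\tens\KK Q_1\tens\close J$, which follows from the injectivity of the induced map $\comp{\KK Q}\tens\KK Q_1\tens A\to\comp{\KK Q}\tens A$ (hereditary one-sided resolution of $A$). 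You are right to flag the convergence/completion issues; in particular, writing elements of $\close J$ topologically as combinations $\sum a_i\,\der{\alpha_i}W\,b_i$ and carrying the resulting possibly infinite sums through $\Delta$ and $D$ is exactly the point that needs care, and is also the reason the paper prefers to appeal to a standard reference. So there is no logical gap, but your route trades a citation for a genuinely more technical argument; each of the extra steps can be made to go through, but it is not the economical path.
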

\begin{proof}
Each term of $\res{A}$ is a projective $A$-bimodule since $A$ is a projective $A$-module on each side, and is finitely generated by finiteness of $Q$. By standard results on presentations of algebras, see for example Butler--King \cite{butlerminimal}*{1.2}, $\mu_0$ is surjective, $\im{\mu_1}=\ker{\mu_0}$ and $\im{\mu_2}=\ker{\mu_1}$. Thus the only statement left to check is that $\mu_2\circ\mu_3=0$. 

Let $v\in Q_0$, and write
\[W_v=\sum_{\alpha\in\vout{v}}(\der{\alpha}{W})\alpha=\sum_{\beta\in\vin{v}}\beta(\der{\beta}{W}).\]
We can calculate $\sum_{\gamma\in Q_1}\Div{\gamma}(W_v)$ using each of the two expressions, to get
\begin{align*}
\sum_{\gamma\in Q_1}\Div{\gamma}(W_v)&=\sum_{\alpha\in\vout{v}}\sum_{\gamma\in Q_1}\Div{\gamma}(\der{\alpha}{W})\alpha+\sum_{\alpha\in\vout{v}}\der{\alpha}{W}\tens\alpha\tens 1,\\
\sum_{\gamma\in Q_1}\Div{\gamma}(W_v)&=\sum_{\beta\in\vin{v}}\sum_{\gamma\in Q_1}\beta\Div{\gamma}(\der{\beta}{W})+\sum_{\beta\in\vin{v}}1\tens\beta\tens\der{\beta}{W}.
\end{align*}
If $v\in Q_0^\mut$, then all arrows incident with $v$ are unfrozen, and so $\der{\alpha}{W}=0=\der{\beta}{W}$ in $A$ for any $\alpha\in\vout{v}$ and $\beta\in\vin{v}$. Thus in this case we have
\[\sum_{\alpha\in\vout{v}}\sum_{\gamma\in Q_1}\Div{\gamma}(\der{\alpha}{W})\alpha=\sum_{\gamma\in Q_1}\Div{\gamma}(W_v)=\sum_{\beta\in\vin{v}}\sum_{\gamma\in Q_1}\beta\Div{\gamma}(\der{\beta}{W}).\]
It follows that
\begin{align*}
\mu_2(\mu_3(1\tens\omega_v\tens 1))&=\mu_2\left(\sum_{\alpha\in\vout{v}}1\tens\rel{\alpha}\tens\alpha -\sum_{\beta\in\vin{v}}\beta\tens\rel{\beta}\tens 1\right)\\
&=\sum_{\alpha\in\vout{v}}\sum_{\gamma\in Q_1}\Div{\gamma}(\der{\alpha}{W})\alpha-\sum_{\beta\in\vin{v}}\sum_{\gamma\in Q_1}\beta\Div{\gamma}(\der{\beta}{W})\\
&=0.
\end{align*}
This completes the proof.
\end{proof}
If the map
\[\begin{tikzcd}[column sep=20pt]
A\tens\KK Q_3^\mut\tens A\arrow{r}{\mu_3}\arrow{d}&A\tens\KK Q_2^\mut\tens A\arrow{r}{\mu_2}\arrow{d}&A\tens\KK Q_1\tens A\arrow{r}{\mu_1}\arrow{d}&A\tens A\arrow{d}{\mu_0}\\
0\arrow{r}&0\arrow{r}&0\arrow{r}&A
\end{tikzcd}\]
from \Cref{frjacresiscomp} is a quasi-isomorphism, then $\res{A}$ is a projective bimodule resolution of $A$. This means that, for the presentation of $A$ as a frozen Jacobian algebra, with relations given by certain derivatives of the superpotential, the first syzygies are dual to the mutable vertices, and there are no higher syzygies. In particular, $\gldim{A}\leq 3$. By \Cref{frjacresiscomp}, this map is a quasi-isomorphism if and only if the cohomology of $\res{A}$ vanishes at $A\tens\KK Q_2^\mut\tens A$ and $A\tens\KK Q_3^\mut\tens A$ (cf.~\cite{broomheaddimer}*{Rem.~7.4}). We will usually abuse notation and denote the map $\res{A}\to A$ from \Cref{frjacresiscomp} by $\mu_0$.

If $F=\emptyset$, the map $\mu_0\colon\res{A}\to A$ being a quasi-isomorphism implies that $A$ is $3$-Calabi--Yau \citelist{\cite{ginzburgcalabiyau}*{Cor.~5.3.3}; \cite{broomheaddimer}*{Thm.~7.7}}. We now show that, in the general case, $\mu_0$ being a quasi-isomorphism implies that $A$ is bimodule internally $3$-Calabi--Yau with respect to $e$.

\begin{thm}
\label{frjaci3cy}
If $A$ is a frozen Jacobian algebra such that $\mu_0\colon\res{A}\to A$ is a quasi-isomorphism, then $A$ is bimodule internally $3$-Calabi--Yau with respect to the frozen idempotent $e=\sum_{v\in F_0}\idemp{v}$.
\end{thm}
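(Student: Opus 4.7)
The plan is to verify the three conditions of \Cref{bimodidcy} for $d=3$. Conditions (i) and (ii) are immediate from the hypothesis combined with \Cref{frjacresiscomp}: the complex $\res{A}$ is a length-three resolution of $A$ by finitely generated projective $\env{A}$-modules, so $\pdim_{\env{A}}A\leq 3$ and $A\in\per{\env{A}}$.

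The heart of the proof is constructing the triangle required by condition (iii). First I would compute $\rhom{A}$ by applying $\Hom_{\env{A}}(-,\env{A})$ to $\res{A}$. Using the standard identification $\Hom_{\env{A}}(A\tens_S V\tens_S A,\env{A})\cong A\tens_S V^\vee\tens_S A$ for finite dimensional $S$-bimodules $V$, together with the natural isomorphisms $(\KK Q_0)^\vee\cong\KK Q_3$ (via $\idemp{v}\leftrightarrow\omega_v$) and $(\KK Q_1)^\vee\cong\KK Q_2$ (via $\alpha\leftrightarrow\rel{\alpha}$), I would present $\rhom{A}[3]$ as a four-term complex
\[
A\tens\KK Q_3\tens A\to A\tens\KK Q_2\tens A\to A\tens\KK Q_1^\mut\tens A\to A\tens\KK Q_0^\mut\tens A
\]
in cohomological degrees $-3,\ldots,0$, whose differentials are obtained from the duals of the $\mu_i$. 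The crucial observation is that $\rhom{A}[3]$ has the same shape as $\res{A}$, except that the frozen/mutable distinction is swapped between the top two degrees and the bottom two.

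Next I would define a chain map $\psi\colon\res{A}\to\rhom{A}[3]$ by taking, in each cohomological degree, the canonical inclusion (in degrees $-3$ and $-2$, where $\res{A}^i$ is naturally a summand of $\rhom{A}[3]^i$) or projection (in degrees $-1$ and $0$, where the reverse inclusion holds). Verifying that $\psi$ commutes with the differentials is the main technical step: it requires computing the duals of $\mu_1,\mu_2,\mu_3$ explicitly and matching them against their originals on the common mutable pieces. This is essentially the same self-duality computation that underlies Ginzburg's result in the unfrozen case, and is where I expect the bulk of the work to lie.

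Finally, I would analyse the mapping cone $C$ of $\psi$ and verify the vanishing condition. Because $\psi$ restricts to an isomorphism on the common mutable summand in each degree, the cone is quasi-isomorphic to a bounded complex of projective $A$-bimodules of the form $Ae_v\tens_\KK e_w A$ with \emph{both} $v,w\in F_0$; that both endpoints lie in $F_0$ uses the elementary fact that every arrow in $F_1$ has its head and tail in $F_0$, since $F$ is a subquiver. For any such bimodule, $\Hom_A(Ae_v\tens_\KK e_w A,M)\cong\Hom_\KK(e_w A,e_v M)$ vanishes when $eM=0$, since $e_v=e_v e$ forces $e_vM\subseteq eM$. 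A standard argument using that each term of $C$ is projective as a left $A$-module then yields $\RHom_A(C,M)=0$ for all $M\in\resdcatfd{\stab{A}}{A}$; the symmetric argument via $w\in F_0$ gives $\RHom_{A^{\op}}(C,N)=0$, completing the verification of \Cref{bimodidcy}(iii).
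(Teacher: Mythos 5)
Your proposal follows the paper's route exactly: compute $\rhom{A}$ from the self-duality of $\res{A}$ under $\Hom_{\env{A}}(-,\env{A})$ using $(\KK Q_0)^\vee\cong\KK Q_3$ and $(\KK Q_1)^\vee\cong\KK Q_2$, define $\psi$ by the degreewise inclusion/projection onto the common mutable summands, and observe that the cone $C$ is built from projectives $A\tens\KK F_i\tens A$ with both tensor factors supported on $F_0$. The one place where you are glib is the final vanishing step: an object $M$ of $\resdcatfd{\stab{A}}{A}$ is a bounded complex whose \emph{cohomology} is annihilated by $e$, not a complex of $\stab{A}$-modules, so the literal statement "$eM=0$" is false. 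What rescues the argument (and is what the paper's spectral-sequence/double-complex argument makes precise) is that multiplication by $e$ is exact, so $eM^\bullet$ is acyclic; then $e_vM^\bullet=e_v(eM^\bullet)$ is acyclic and $\Hom_\KK(e_wA,-)$ is exact, so each column of the double complex $\Hom_A(C^i,M^j)$ is acyclic and the total complex vanishes. Your germ idea is the right one; it just needs this small upgrade from modules to complexes to close the derived-category statement.
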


\begin{proof}Since $\res{A}\in\per{\env{A}}$, the quasi-isomorphism $\mu_0\colon\res{A}\isoto A$ makes $\res{A}$ into a projective resolution of $A$, implying immediately that $\pdim_{\env{A}}A\leq3$ and $A\in\per{\env{A}}$. It remains to check condition (iii) from \Cref{bimodidcy}.

We begin by describing $\rhom{A}=\RHom_{\env{A}}(A,\env{A})\in\bdcat{\env{A}}$. Denoting $\Hom_{\env{A}}(-,-)$ by $(-,-)$, the complex $\rhom{A}$ is given by
\[\begin{tikzcd}[column sep=17pt]
(A\tens A,\env{A})\arrow{r}{-\contra{\mu_1}}&(A\tens\KK Q_1\tens A,\env{A})\arrow{r}{\contra{\mu_2}}&(A\tens\KK Q_2^\mut\tens{A},\env{A})\arrow{r}{-\contra{\mu_3}}&(A\tens\KK Q_3^\mut\tens A,\env{A})
\end{tikzcd}\]
with $\contra{\mu_i}\colon f\mapsto f\circ\mu_i$; see Keller \cite{kellercalabiyau}*{\S2.7} for the signs on the differentials.

There are $A$-bimodule isomorphisms $A\tens A\iso\bigdsum_{v\in Q_0}A\idemp{v}\tens_{\KK}\idemp{v}A$ and $\env{A}\iso A\tens_{\KK}A$. Introducing the shorthand notation
\[\mathbf{x}\tens\mathbf{y}=\sum_{i=1}^kx^i\tens y^i\]
for elements of $A\tens_\KK A$, a homomorphism $f_0\colon A\tens A\to\env{A}$ is uniquely determined by the values
\[f_0(1\tens\idemp{v}\tens 1)=\mathbf{x}_v\tens\mathbf{y}_v\]
for each $v\in Q_0$. Since $1\tens\idemp{v}\tens 1=\idemp{v}\tens\idemp{v}\tens\idemp{v}$, we must have
\[\mathbf{x}_v\tens\mathbf{y}_v=\idemp{v}\mathbf{x}_v\tens\mathbf{y}_v\idemp{v}\in\idemp{v}A\tens_\KK A\idemp{v},\]
but $\mathbf{x}_v$ and $\mathbf{y}_v$ may otherwise be chosen freely. If follows that we have an isomorphism
\begin{align*}
(A\tens A,\env{A})&\isoto A\tens\KK Q_3\tens A,&f_0&\mapsto\sum_{v\in Q_0}\mathbf{y}_v\tens\omega_v\tens\mathbf{x}_v
\intertext{of $A$-bimodules. Similar arguments yield explicit isomorphisms}
(A\tens\KK Q_1\tens A,\env{A})&\isoto A\tens\KK Q_2\tens A,&f_1&\mapsto\sum_{\alpha\in Q_1}\mathbf{y}_\alpha\tens\rel{\alpha}\tens \mathbf{x}_\alpha,\\
(A\tens\KK Q_2^\mut\tens A,\env{A})&\isoto A\tens\KK Q_1^\mut\tens A,&f_2&\mapsto\sum_{\alpha\in Q_1^\mut}\mathbf{y}'_\alpha\tens\alpha\tens \mathbf{x}'_\alpha,\\
(A\tens\KK Q_3^\mut\tens A,\env{A})&\isoto A\tens\KK Q_0^\mut\tens A,&f_3&\mapsto\sum_{v\in Q_0^\mut}\mathbf{y}'_v\tens \idemp{v}\tens \mathbf{x}'_v,
\end{align*}
where the functions $f_1$, $f_2$ and $f_3$ are uniquely determined by the values
\begin{align*}
f_1(1\tens\alpha\tens1)&=\mathbf{x}_\alpha\tens\mathbf{y}_\alpha\in\idemp{\head{\alpha}}A\tens_\KK A\idemp{\tail{\alpha}},\\
f_2(1\tens\rel{\alpha}\tens1)&=\mathbf{x}'_\alpha\tens\mathbf{y}'_\alpha\in\idemp{\tail{\alpha}}A\tens_\KK A\idemp{\head{\alpha}},\\
f_3(1\tens\omega_v\tens 1)&=\mathbf{x}'_v\tens\mathbf{y}'_v\in\idemp{v}A\tens_\KK A\idemp{v}.
\end{align*}
Since $\alpha\in F_1$ implies that $\head{\alpha},\tail{\alpha}\in F_0$, the map $\bar{\mu}_1\colon A\tens\KK Q_1\tens A\to A\tens\KK Q_0\tens A$ restricts to a map $A\tens\KK F_1\tens A\to A\tens\KK F_0\tens A$, and thus taking quotients yields a map $\dual{\mu_1}\colon A\tens\KK Q_1^\mut\tens A\to A\tens\KK Q_0^\mut\tens A$. Explicitly, $\dual{\mu_1}$ is given by
\[\dual{\mu_1}(1\otimes\alpha\otimes 1)=1\otimes(1-e)\alpha-\alpha(1-e)\otimes 1.\]
Define $\dual{\mu_2}$ to be the composition of $\bar{\mu}_2$ with the projection $A\tens\KK Q_1\tens A\to A\tens\KK Q_1^\mut\tens A$; explicitly
\[\dual{\mu_2}(1\tens\rel{\alpha}\tens 1)=\sum_{\beta\in Q_1^\mut}\Div{\beta}(\der{\alpha}{W}).\]
Finally, let $\dual{\mu_3}=\bar{\mu}_3$. Then one can check that the isomorphisms of $A$-bimodules defined above induce an isomorphism of $\rhom{A}$ with the complex
\[\begin{tikzcd}[column sep=20pt]
A\tens\KK Q_3\tens A\arrow{r}{\dual{\mu_3}}&A\tens\KK Q_2\tens A\arrow{r}{\dual{\mu_2}}&A\tens\KK Q_1^\mut\tens A\arrow{r}{\dual{\mu_1}}&A\tens\KK Q_0^\mut\tens A.
\end{tikzcd}\]
As an example to illustrate the necessary calculations, we show that we get an isomorphism of $\mu_2^*\colon(A\tens\KK Q_1\tens A,\env{A})\to(A\tens\KK Q_2^\mut\tens A,\env{A})$ with $\dual{\mu_2}\colon A\tens\KK Q_2\tens A\to A\tens\KK Q_1^\mut\tens A$. It suffices to check this on the generators $1\tens\rel{\alpha}\tens 1$ of $A\tens\KK Q_2\tens A$. First observe that under the isomorphism $(A\tens\KK Q_1\tens A,\env{A})\isoto A\tens\KK Q_2\tens A$, the preimage of $1\tens\rel{\alpha}\tens 1=\idemp{\tail{\alpha}}\tens\rel{\alpha}\tens\idemp{\head{\alpha}}$ is the $A$-bimodule homomorphism $f_\alpha$ determined by
\[f_\alpha(1\tens\beta\tens 1)=\begin{cases}\idemp{\head{\alpha}}\tens\idemp{\tail{\alpha}},&\beta=\alpha,\\0,&\otherwise.\end{cases}\]
We then calculate for each $\beta\in Q_1^\mut$ that
\begin{align*}
\mu_2^*(f_\alpha)(1\tens\rel{\beta}\tens 1)&=f_\alpha\mu_2(1\tens\rel{\beta}\tens 1)\\
&=f_\alpha\Big(\sum_{\gamma\in Q_1}\Div{\gamma}(\der{\beta}{W})\Big)\\
&=\mathbf{x}_\beta\tens\mathbf{y}_\beta,
\end{align*}
where
\[\Div{\alpha}(\der{\beta}{W})=\mathbf{x}_\beta\tens\alpha\tens\mathbf{y}_\beta.\]
We must then have
\[\Div{\beta}(\der{\alpha}{W})=\mathbf{y}_\beta\tens\beta\tens\mathbf{x}_\beta,\]
and so the isomorphism $(A\tens\KK Q_2^\mut\tens A,\env{A})\isoto A\tens\KK Q_1^\mut\tens A$ takes $\mu_2^*(f_\alpha)$ to
\[\sum_{\beta\in Q_1^\mut}\mathbf{y}_\beta\tens\beta\tens\mathbf{x}_\beta=\sum_{\beta\in Q_1^\mut}\Div{\beta}(\der{\alpha}{W})=\dual{\mu}_2(1\tens\rel{\alpha}\tens 1),\]
as required.

Now consider the commutative diagram
\begin{equation}
\label{rhomdiagram}
\begin{tikzcd}[column sep=20pt]
0\arrow{d}&0\arrow{d}&0\arrow{d}&0\arrow{d}\\
0\arrow{r}\arrow{d}&0\arrow{r}\arrow{d}&A\tens\KK F_1\tens A\arrow{r}\arrow{d}&A\tens\KK F_0\tens A\arrow{d}\\
A\tens\KK Q_3^\mut\tens A\arrow{r}{\mu_3}\arrow{d}{+}&A\tens\KK Q_2^\mut\tens A\arrow{r}{\mu_2}\arrow{d}{-}&A\tens\KK Q_1\tens A\arrow{r}{\mu_1}\arrow{d}{+}&A\tens\KK Q_0\tens A\arrow{d}{-}\\
A\tens\KK Q_3\tens A\arrow{r}{-\dual{\mu_3}}\arrow{d}&A\tens\KK Q_2\tens A\arrow{r}{-\dual{\mu_2}}\arrow{d}&A\tens\KK Q_1^\mut\tens A\arrow{r}{-\dual{\mu_1}}\arrow{d}&A\tens\KK Q_0^\mut\tens A\arrow{d}\\
A\tens\KK F_3\tens A\arrow{r}\arrow{d}&A\tens\KK F_2\tens A\arrow{r}\arrow{d}&0\arrow{r}\arrow{d}&0\arrow{d}\\
0&0&0&0\end{tikzcd}
\end{equation}
in which the columns are split exact, the second row is $\res{A}$, the third row is isomorphic to $\rhom{A}[3]$ by the preceding calculations, and the signs on the vertical arrows indicate whether the corresponding map is the inclusion or its negative.

The diagram (\ref{rhomdiagram}) provides us with a map of complexes $A\iso\res{A}\to\rhom{A}[3]$ in $\bdcat{\env{A}}$, and shows that the cone of this map has the form
\[\begin{tikzcd}[column sep=20pt]
C=A\tens\KK F_3\tens A\arrow{r}&A\tens(\KK F_2\dsum\KK F_1)\tens A\arrow{r}&A\tens\KK F_0\tens A.
\end{tikzcd}\]
Let $M\in\resdcat{\stab{A}}{A}$ have finite dimensional total cohomology. Since the cohomology of $M$ is concentrated in some interval, $M\in\bdcat{A}$. We pick a bounded representative $M^\bullet$ of the quasi-isomorphism class of $M$, allowing us to compute the complex $\RHom_A(C,M)$ as the total complex of the double complex with terms
\[\Hom_A(A\tens V_i\tens A,M^j),\]
where $V_1=\KK F_3$, $V_2=\KK F_2\dsum\KK F_1$, $V_3=\KK F_0$ and $V_i=0$ for all other $i$. Since each $S$-bimodule $V_i$ has the property that $eV_ie=V_i$, we have
\[A\tens V_i\tens A=Ae\tens V_i\tens eA,\]
so the terms of the relevant double complex are isomorphic to
\[\Hom_S(V_i\tens eA,\Hom_A(Ae,M^j))=\Hom_S(V_i\tens eA,eM^j).\]
Since $M\in\resdcat{\stab{A}}{A}$, the complex $eM^\bullet$ is acyclic. Moreover, since $S$ is semi-simple, $\Hom_S(V_i\tens eA,-)$ preserves acyclicity, so the vertical cohomology of the double complex vanishes. It follows that $\RHom_A(C,M)=0$. A similar argument shows that $\RHom_{A^{\op}}(C,N)=0$ for all $N\in\resdcat{\stab{A}^{\op}}{A^{\op}}$, so we conclude that $A$ is bimodule internally $3$-Calabi--Yau with respect to $e$.
\end{proof}

\begin{eg}
Since the frozen Jacobian algebra $A$ in \Cref{i3cyeg,frjacalgeg} is finite dimensional (of small dimension!)\ it is possible to check that $\res{A}\isoto A$ directly by choosing a basis of $A$, whence this algebra is bimodule internally $3$-Calabi--Yau with respect to this frozen idempotent. The algebra $A'$ in \Cref{i3cyeg} may also be presented as a frozen Jacobian algebra, and again the fact that $A'$ is finite dimensional allows us to check directly that $\res{A'}\isoto A'$, giving the bimodule internally $3$-Calabi--Yau property.

However, we also have many more examples, some of which are infinite dimensional. For example, let
\[(Q,F)=\mathord{\begin{tikzpicture}[baseline=0,scale=1.3]
\node at (60:2) (1) {$\boxed{1}$};
\node at (0:2) (2) {$\boxed{2}$};
\node at (-60:2) (3) {$\boxed{3}$};
\node at (-120:2) (4) {$\boxed{4}$};
\node at (-180:2) (5) {$\boxed{5}$};
\node at (-240:2) (6) {$\boxed{6}$};
\node at (90:0.75) (7) {$7$};
\node at (-150:0.75) (8) {$8$};
\node at (-30:0.75) (9) {$9$};
\path[-angle 90,font=\scriptsize]
	(7) edge (8)
	(8) edge (9)
	(9) edge (7)
	(6) edge (7)
	(7) edge (1)
	(8) edge (5)
	(4) edge (8)
	(9) edge (3)
	(2) edge (9)
	(7) edge (1);
\path[-angle 90,font=\scriptsize,dashed]
	(1) edge (6)
	(5) edge (6)
	(5) edge (4)
	(3) edge (4)
	(3) edge (2)
	(1) edge (2);
\end{tikzpicture}}\]
where the frozen subquiver is indicated by boxed vertices and dashed arrows as before, and let $W$ be the potential given by the sum of the $3$-cycles minus the sum of the $4$-cycles. Writing $\hat{A}=\frjac{Q}{F}{W}$, we have $\res{\hat{A}}\isoto \hat{A}$. We omit the calculation here, but observe that it there is a grading of $\hat{A}$ in which all arrows have positive degree, meaning that by \cite{broomheaddimer}*{Prop.~7.5} it is sufficient to check that $\mu_0\tens_{\hat{A}}\simp{i}\colon\res{\hat{A}}\tens_{\hat{A}}\simp{i}\to\simp{i}$ is a quasi-isomorphism for each simple $\hat{A}$-module $\simp{i}$, which is more straightforward. One can also check that $\hat{A}$ is the endomorphism algebra of a cluster-tilting object in Jensen--King--Su's categorification of the cluster structure on the Grassmannian $\Grass{2}{6}$, described in \Cref{jkscats}.
\end{eg}

The existence of a quasi-isomorphism $\res{A}\isoto A$ allows us to deduce many homological properties of $A$ and of the boundary algebra $B$. For example, any $A$-module $M$ has a (usually non-minimal) projective resolution $\res{A}\tens_AM$. Using this, we see immediately that if $M$ is any $A$-module such that $M=eM$, such as a simple module at a frozen vertex, then $A\tens\KK Q_3^\mut\tens M=0$, and $\pdim_AM\leq 2$.

It follows from \Cref{frjaci3cy} that if $A$ is a frozen Jacobian algebra with the property that $\mu_0\colon\res{A}\to A$ is a quasi-isomorphism, then both $A$ and $A^{\op}$ are internally $3$-Calabi--Yau with respect to the frozen idempotent $e$. If additionally $A$ is Noetherian and $A/\Span{e}$ is finite dimensional, then we may apply \Cref{airanalogue}. We may also make some observations about the homological algebra of the boundary algebra $B$ in this situation.

\begin{prop}[cf.~\cite{amiotstable}*{Rem.~2.8}]
\label{rhomB}
Let $A$ be a Noetherian frozen Jacobian algebra with frozen idempotent $e$ such that $\mu_0\colon\res{A}\to A$ is a quasi-isomorphism and $A/\Span{e}$ is finite dimensional, and let $B=eAe$. Let $\rhom{B}=\RHom_{\env{B}}(B,\env{B})$. Then $\rhom{B}\iso e\rhom{A}e$ in $\bdcat{\env{B}}$.
\end{prop}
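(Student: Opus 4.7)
The plan is to use $e\res{A}e$ as a resolution of $B$ that, while not typically projective over $\env{B}$, is acyclic enough to compute $\RHom_{\env{B}}(B,\env{B})$. Since $e(-)e$ is exact on $\env{A}$-modules (it is multiplication by the idempotent $e\tens e\in\env{A}$ on both sides), applying it to the quasi-isomorphism $\mu_0\colon\res{A}\to A$ will give a quasi-isomorphism $e\res{A}e\to eAe=B$ in $\bdcat{\env{B}}$, whose terms $eA\tens V_i\tens Ae$ split as direct sums of $B$-bimodules of the form $eA\idemp{v}\tens\idemp{w}Ae$ indexed by the symbols of $V_i$.

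By \Cref{frjaci3cy}, $A$ is bimodule internally $3$-Calabi--Yau with respect to $e$, so by \Cref{bimodtocat} and \Cref{lrsym} both $A$ and $A^{\op}$ are internally $3$-Calabi--Yau with respect to $e$. Consequently \Cref{extcalcs} applies on both sides and yields $\Hom_B(eA\idemp{v},B)\iso \idemp{v}Ae$ with $\Ext^i_B(eA\idemp{v},B)=0$ for $i>0$, and dually $\Hom_{B^{\op}}(\idemp{w}Ae,B)\iso eA\idemp{w}$ with vanishing higher $\Ext$. A K\"unneth argument---tensoring one-sided projective resolutions of $eA\idemp{v}$ and $\idemp{w}Ae$ over $\KK$ to produce a projective resolution of $eA\idemp{v}\tens\idemp{w}Ae$ over $\env{B}$, then applying $\Hom_{\env{B}}(-,\env{B})$---will then give $\Ext^i_{\env{B}}(eA\idemp{v}\tens\idemp{w}Ae,\env{B})=0$ for $i>0$, together with a natural isomorphism $\Hom_{\env{B}}(eA\idemp{v}\tens\idemp{w}Ae,\env{B})\iso \idemp{v}Ae\tens eA\idemp{w}$ of $B$-bimodules.

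Since each term of $e\res{A}e$ is then $\Hom_{\env{B}}(-,\env{B})$-acyclic, a standard dimension-shifting argument on the bounded acyclic cone between $e\res{A}e$ and a genuine projective resolution of $B$ produces a quasi-isomorphism $\rhom{B}\iso\Hom_{\env{B}}(e\res{A}e,\env{B})$ in $\bdcat{\env{B}}$. To identify this with $e\rhom{A}e$, I would use the natural map $e\rhom{A}e\to\Hom_{\env{B}}(e\res{A}e,\env{B})$ sending $efe$ to its restriction to $e\res{A}e$: the outer restriction to $e\res{A}e$ combined with the inner $(e\tens e)$-action on the image in $\env{A}$ lands precisely in $\env{B}=eAe\tens eAe$. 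The term-by-term computations above identify both sides with $\bigoplus \idemp{v}Ae\tens eA\idemp{w}$ (summed over symbols of $V_i$), and the differentials agree because both complexes are obtained from $\res{A}$ by the same dualisation-and-restriction procedure, taken in the two possible orders.

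The main technical obstacle is the K\"unneth step: since $B$ is only Iwanaga--Gorenstein and not of finite global dimension in general, the one-sided projective resolutions of $eA\idemp{v}$ and $\idemp{w}Ae$ may well be infinite, and I will need to verify carefully that the $\KK$-tensor product genuinely computes $\Ext_{\env{B}}$ of $eA\idemp{v}\tens \idemp{w}Ae$ against $\env{B}$ and that the resulting $\Hom$-identifications assemble into a map of complexes compatible with the differentials inherited from $\res{A}$.
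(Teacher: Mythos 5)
Your overall strategy matches the paper's: show that $e\res{A}e$ is a resolution of $B$ by bimodules that are acyclic for $\Hom_{\env{B}}(-,\env{B})$, conclude $\rhom{B}\iso\Hom_{\env{B}}(e\res{A}e,\env{B})$, and then identify this complex with $e\rhom{A}e$. Two observations on the details.

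First, the K\"unneth worry you flag is not an actual obstacle. The algebra $B$ is Noetherian (\Cref{Bnoetherian}), and $eA$ and $Ae$ are finitely generated over $B$ and $B^{\op}$ respectively (as established in the proof of \Cref{airanalogue}), so the one-sided resolutions $\bP\to eA$ and $\bP'\to Ae$ may be taken by finitely generated projectives (possibly unbounded to the left, which is harmless since the total complex is bounded above). Finite generation is exactly what makes the natural map $\Hom_B(\bP^i,B)\tens_\KK\Hom_{B^{\op}}(\bP'^j,B)\to\Hom_{\env{B}}(\bP^i\tens_\KK\bP'^j,\env{B})$ an isomorphism term-by-term, after which the algebraic K\"unneth theorem over the field $\KK$ gives the concentration in degree $0$ and hence $\Ext^i_{\env{B}}(eA\tens V_i\tens Ae,\env{B})=0$ for $i>0$. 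The paper records all of this in the single line $\RHom_{\env{B}}(eA\tens_\KK Ae,\env{B})=\RHom_B(eA,B)\tens_\KK\RHom_{B^{\op}}(Ae,B)$, using \Cref{extcalcs} on both sides.

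Second, for the identification $\Hom_{\env{B}}(e\res{A}e,\env{B})\iso e\rhom{A}e$, your explicit ``restriction'' map is the right one, but the claim that ``the differentials agree because both complexes are obtained from $\res{A}$ by the same dualisation-and-restriction procedure'' is left unverified, and this is precisely the part of the argument that requires some care (for instance, because the identification of each term with $\bigoplus\idemp{v}Ae\tens eA\idemp{w}$ involves a choice of isomorphism). The paper sidesteps this by invoking \Cref{airanalogue}(iii): since the functor $eA\tens_A-\tens_AAe\colon\catproj{\env{A}}\to\fgmod{\env{B}}$ is fully faithful and $Ae\tens_\KK eA\in\catproj{\env{A}}$ with $eA\tens_A(Ae\tens_\KK eA)\tens_AAe=\env{B}$, one obtains $\Hom_{\env{B}}(eP_ie,\env{B})\iso\Hom_{\env{A}}(P_i,Ae\tens_\KK eA)=e\Hom_{\env{A}}(P_i,\env{A})e$ as a natural isomorphism, hence automatically compatible with the differentials induced from $\res{A}$. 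You would do well to replace the term-by-term matching with this functorial argument; otherwise your proof has a genuine, though fillable, gap at exactly the step you flagged as delicate.
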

\begin{proof}
Write $P_i=A\tens\KK Q_i\tens A$ for $i=0,1$ and $P_i=A\tens\KK Q_i^\mut\tens A$ for $i=2,3$. By \Cref{frjaci3cy}, $A$ and $A^{\op}$ are internally $3$-Calabi--Yau with respect to $e$, so we have $\Ext^i_B(eA,B)=0=\Ext^i_{B^{\op}}(Ae,B)$ for all $i>0$ by \Cref{extcalcs}. Thus we may calculate
\begin{align*}
\RHom_{\env{B}}(eA\tens_\KK Ae,\env{B})&=\RHom_B(eA,B)\tens_\KK\RHom_{B^{\op}}(Ae,B)\\
&=\Hom_B(eA,B)\tens_\KK\Hom_{B^{\op}}(Ae,B)\\
&=\Hom_{\env{B}}(eA\tens_\KK Ae,\env{B}).
\end{align*}
It follows that the terms $eP_ie$ of the sequence $e\res{A}e\iso B$ satisfy $\Ext^i_{\env{B}}(eP_ie,\env{B})=0$ for $i>0$, and so
\[\RHom_{\env{B}}(B,\env{B})\iso\Hom_{\env{B}}(e\res{A}e,\env{B}).\]
By \Cref{airanalogue}(iii), the functor $eA\tens_A-\tens_AAe\colon\catproj{\env{A}}\to\fgmod{\env{B}}$ is fully faithful, and so
\[\Hom_{\env{B}}(eP_ie,\env{B})\iso\Hom_{\env{A}}(P_i,Ae\tens_\KK eA)=e\Hom_{\env{A}}(P_i,\env{A})e.\]
It follows that
\begin{align*}
\rhom{B}&=\RHom_{\env{B}}(B,\env{B})\iso\Hom_{\env{B}}(e\res{A}e,\env{B})\\
&=e\Hom_{\env{A}}(\res{A},\env{B})e\iso e\RHom_{\env{A}}(A,\env{A})e=e\rhom{A}e.\qedhere
\end{align*}
\end{proof}

\begin{prop}
\label{3cyesque}
With the notation and assumptions of \Cref{rhomB}, for any $X\in\bdcat{B}$ we have
\[\rhom{B}\ltens_BX\iso X[-3]\]
in the quotient category $\bdcat{B}/\per{B}\simeq\stabGP(B)$.
\end{prop}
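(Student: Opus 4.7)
The plan is to lift the bimodule Calabi--Yau triangle for $A$ obtained in \Cref{frjaci3cy} to a triangle over $B$ using the exact functor $e(-)e$, and then to verify that the resulting defect complex lies in $\per{B}$.

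Applying $e(-)e$ to the distinguished triangle $A\to\rhom{A}[3]\to C\to A[1]$ in $\dcat{\env{A}}$, and invoking the identification $e\rhom{A}e\iso\rhom{B}$ from \Cref{rhomB}, yields a distinguished triangle
\[B\to\rhom{B}[3]\to eCe\to B[1]\]
in $\dcat{\env{B}}$. Tensoring with $X$ and shifting by $[-3]$ then produces
\[X[-3]\to\rhom{B}\ltens_BX\to(eCe\ltens_BX)[-3]\to X[-2],\]
so the desired isomorphism in $\bdcat{B}/\per{B}$ follows once we show $eCe\ltens_BX\in\per{B}$ for every $X\in\bdcat{B}$. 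From the explicit description of $C$ in the proof of \Cref{frjaci3cy}, each term has the form $A\tens\KK F_i\tens A$ with $\KK F_i=e\KK F_ie$ a finite-dimensional $S$-bimodule supported on frozen idempotents, so the terms of $eCe$ are the finitely generated projective $B$-bimodules $B\tens\KK F_i\tens B$; in particular $eCe\in\per{\env{B}}$.

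The principal difficulty lies in establishing $eCe\ltens_BX\in\per{B}$. A natural approach uses the adjunction $Ae\tens_B(-)\dashv e(-)$ to rewrite $eCe\ltens_BX\iso e(C\ltens_AY)$, where $Y:=Ae\ltens_BX$. Since $Ae$ is finitely generated projective as a right $B$-module (\Cref{airanalogue} applied to $A^{\op}$) and $X\in\bdcat{B}$, we have $Y\in\bdcat{\fgmod{A}}$, and so $Y\in\per{A}$ by Noetherianity and finite global dimension of $A$. The remaining task is to deduce $C\ltens_AY\in\per{A}$.

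The subtle point is that a summand $Ae_v\tens_{\KK}e_wA$ of a term of $C$, tensored with a summand $Ae_j$ of a projective resolution of $Y$, produces $Ae_v\tens_{\KK}e_wAe_j$, which is finitely generated over $A$ only when the vector space $e_wAe_j$ is finite-dimensional; this typically fails when $A$ is infinite-dimensional. Overcoming this obstacle requires exploiting the bimodule internally Calabi--Yau vanishing property of $C$ (namely $\RHom_A(C,M)=0$ for $M\in\resdcatfd{\stab{A}}{A}$) to see that perfectness is nonetheless recovered at the level of the total complex, perhaps after first using the Buchweitz equivalence $\bdcat{B}/\per{B}\simeq\stabGP(B)$ to reduce to $X\in\GP(B)$ and combining with the $(d-1)$-Calabi--Yau structure of $\stabGP(B)$ established in \Cref{bimodicytostabcy}.
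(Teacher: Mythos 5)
Your argument coincides with the paper's proof through the reduction to showing $eCe\ltens_BX\in\per{B}$: you correctly apply the functor $eA\tens_A(-)\tens_AAe$ to the triangle $A\to\rhom{A}[3]\to C\to A[1]$, invoke \Cref{rhomB} to identify $e\rhom{A}e\iso\rhom{B}$, observe (as the paper does, using $e(\KK F_i)e=\KK F_i$) that the terms $B\tens\KK F_i\tens B$ of $eCe$ are finitely generated projective $\env{B}$-modules so that $eCe\in\per{\env{B}}$, and tensor with $X$ to arrive at the same target. At this point, however, the paper's proof is over: it concludes directly that $eCe\in\per{\env{B}}$ yields $eCe\ltens_BX\in\per{B}$, and hence the desired isomorphism in $\bdcat{B}/\per{B}$.

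You instead attempt to justify this last step by pushing the computation back to $A$ via the adjunction $Ae\tens_B(-)\dashv e(-)$, setting $Y=Ae\ltens_BX$ and trying to deduce $C\ltens_AY\in\per{A}$. This detour is not present in the paper and does not simplify anything: the finiteness difficulty you run into (terms such as $Ae_v\tens_\KK e_wAe_j$ with $e_wAe_j$ possibly infinite-dimensional) is exactly the same kind of difficulty you would face in establishing $eCe\ltens_BX\in\per{B}$ directly, since $(Be_v\tens_\KK e_wB)\tens_BX\iso Be_v\tens_\KK e_wX$. So you have identified the crux of the matter, but you have not resolved it and have not returned to the $B$-level statement, which is what the proposition requires. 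The closing speculation about using the vanishing $\RHom_A(C,M)=0$ for $M\in\resdcatfd{\stab A}{A}$, or the Calabi--Yau property of $\stabGP(B)$ from \Cref{bimodicytostabcy}, does not constitute a proof. As written, your argument is therefore incomplete: you need to either accept the paper's one-line implication $eCe\in\per{\env{B}}\Rightarrow eCe\ltens_BX\in\per{B}$ for $X\in\bdcat{B}$, or supply the missing finiteness argument; staying at the level of $B$, rather than lifting to $A$, is the route the paper takes.
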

\begin{proof}
The proof of \Cref{frjaci3cy} constructs a map $A\to\rhom{A}[3]$ with mapping cone
\[\begin{tikzcd}[column sep=20pt]
C=A\tens\KK F_3\tens A\arrow{r}&A\tens(\KK F_2\dsum\KK F_1)\tens A\arrow{r}&A\tens\KK F_0\tens A.
\end{tikzcd}\]
Since each $S$-bimodule $\KK F_i$ has the property that $e(\KK F_i)e=\KK F_i$, we can instead write $C$ as
\[\begin{tikzcd}[column sep=20pt]
Ae\tens\KK F_3\tens eA\arrow{r}&Ae\tens(\KK F_2\dsum\KK F_1)\tens eA\arrow{r}&Ae\tens\KK F_0\tens eA.
\end{tikzcd}\]
Now applying the functor $eA\tens_A-\tens_AAe$ to the triangle $A\to\rhom{A}[3]\to C\to A[1]$ in $\per{\env{A}}$ yields the triangle
\[\begin{tikzcd}[column sep=20pt]
B\arrow{r}&e\rhom{A}e[3]\arrow{r}&eCe\arrow{r}&B[1]
\end{tikzcd}\]
in $\bdcat{\env{B}}$. We have
\[\begin{tikzcd}[column sep=20pt]
eCe=B\tens\KK F_3\tens B\arrow{r}&B\tens(\KK F_2\dsum\KK F_1)\tens B\arrow{r}&B\tens\KK F_0\tens B\in\per{\env{B}},
\end{tikzcd}\]
and $e\rhom{A}e\iso\rhom{B}$ by \Cref{rhomB}. So applying $-\ltens_BX$ to the above triangle yields the triangle
\[\begin{tikzcd}[column sep=20pt]
X\arrow{r}&\rhom{B}\ltens_BX[3]\arrow{r}&eCe\ltens_B X\arrow{r}&X[1]
\end{tikzcd}\]
in $\bdcat{B}$. Since $eCe\in\per{\env{B}}$, we have $eCe\ltens_BX\in\per{B}$, and so $\rhom{B}\ltens_BX\iso X[-3]$ in the quotient $\bdcat{B}/\per{B}$, by the above triangle. The algebra $B$ is Iwanaga--Gorenstein by \Cref{airanalogue}(a), so $\bdcat{B}/\per{B}\simeq\stabGP(B)$ by \cite{buchweitzmaximal}*{Thm.~4.4.1}.
\end{proof}

\bibliographystyle{amsalpha}
\bibliography{../../mainbib}
\end{document}